\tikzset{->-/.style={decoration={markings,mark=at position #1 with {\arrow{>}}},postaction={decorate}}}
\definecolor{red}{rgb}{1,0,0} 
 \definecolor{darkgreen}{rgb}{0, .7, 0}
 \definecolor{purple}{rgb}{.7, 0, 1}
\tikzset{mynode/.style={draw,circle,fill=black,inner sep=2pt,outer sep=0.5pt}}
\newtheorem{theorem}{Theorem}[section]
\newtheorem*{theorem*}{Theorem}
\newtheorem*{lemma*}{Lemma}
\newtheorem{proposition}[theorem]{Proposition}
\newtheorem{lemma}[theorem]{Lemma}
\newtheorem{corollary}[theorem]{Corollary}
\theoremstyle{definition}
\theoremstyle{remark}
\begin{document}
\title{Non self-similar metabelian groups}
\author{Dessislava H. Kochloukova, Melissa de Sousa Luiz}
\address{State University of Campinas (UNICAMP)}
\email{desi@unicamp.br, m262675@dac.unicamp.br}

\maketitle

\begin{abstract} We show   some sufficient conditions for a finitely presented group $G = A \rtimes Q$, with $A$ and $Q$ abelian and $Krull dim (A) = 2$ to be not self-similar. This is in contrast to the case of $Krull dim(A) = 1$ considered in \cite{K-S}.
\end{abstract}

\section{Introduction}

There is a vast literature on self-similar groups.   A self-similar group (or a state closed group) of degree $m$ is a group $G$ admitting a faithful, self-similar action on $m$-ary rooted tree.Among those groups are some well known examples as the Grigorchuk group \cite{G} and the Gupta-Sidki group \cite{Sidki}.  Recently Olivier showed that there exist finitely generated nilpotent groups that are not self-similar \cite{O}. Another example of a finitely generated nilpotent not self-similar group can be found in \cite{PhD}.

Self-similar groups are always residually finite, but the converse does not hold. As shown by Hall in \cite{Hall} all finitely generated metabelian groups are residually finite. In this paper we aim to describe a class of finitely presented metabelian groups that are not self-similar. In \cite{Brazil2} Dantas and Sidki showed that $\mathbb{Z} \wr \mathbb{Z}$ is not transitive self-similar, by showing that if $G = A \wr B$, with $A$ abelian and $B = \mathbb{Z}^n$, is transitive self-similar then $A$ has finite exponent. But it is well known that $\mathbb{Z} \wr \mathbb{Z}$ is  (intransitive) self-similar.
In \cite{K-S} Kochloukova and Sidki showed that for $G = A \rtimes Q$ where $A$ is viewed as $\mathbb{Z} Q$-module via conjugation , if the Krull dimension of $A$ as $\mathbb{Z} 	Q$-module is 1 ( i.e. the Krull dimension of the ring $\mathbb{Z} Q/ ann_{\mathbb{Z} Q} (A)$ is 1) and the centralizer $C_{Q}(A) = \{ q \in Q \ | \ [q, A] = 1 \}$ is trivial then $G$ is transitive self-similar. This result is somewhat surprising and in this paper we show that Krull dimension 1 case is a very specific case and similar behaviour should not be expected in Krull dimension 2 and probably in higher dimension.

 In the case of transitive self-similar groups the link between virtual endomorphism and self-similarity was pioneered by Nekrashevych and Sidki in \cite{N-S}. The version of this result for (intransitive) self-similar groups was developed by Dantas,  Santos and Sidki in \cite{Brazil}. This was used to study some examples of self-similar groups but many of the examples studied before are of the type $A \rtimes Q$, where the action of $Q$ on $A$ is ``close'' to being free i.e. many examples are modelled by wreath products. The situation considered by Kochloukova and Sidki in \cite{K-S} is quite different,  there the authors show examples $G = A \rtimes Q$, with $A$ and $Q$ abelian, $G$ of homological type $FP_m$  but in this case $A$ is always of finite exponent. Since for metabelian groups the homological type $FP_2$ coincides with finite presentability, see \cite{B-S}, we get examples of finitely presented metabelian  self-similar groups. The first known example of  a finitely presented metabelian  self-similar group was given by Bartholdi, Neuhauser, Woess
 in \cite{B-N-W}.  In \cite{S-S} Skipper and Steinberg realised lamplighter groups $A \wr \mathbb{Z}$  with A a finite abelian group as automaton groups via affine transformations of power series rings with coefficients in a finite commutative ring and gave conditions on the power series that  guarantee that the automaton is reversible or bireversible.
 
  The main result of this paper is that some metabelian groups, corresponding to Krull dimension 2, are not self-similar. Our study was inspired by the question of Dantas whether it is possible to find an example of a finitely presented self-similar metabelian group that contains a copy of $\mathbb{Z} \wr \mathbb{Z}$ and a question of Sidki whether it is possible to classify all finitely  generated metabelian self-similar groups. We could not find an example that answers Dantas's question but our study of possible examples lead us to the main result of this paper and we conjecture  that  a finitely presented self-similar metabelian group cannot contain a copy of $\mathbb{Z} \wr \mathbb{Z}$. Though we do not answer Sidki's question  in full we get some general results and the amount of difficulties we reach in this paper points out that probably the question is very difficult to answer in  its full generality.

  Our approach is to study possible virtual endomorphisms of $G = A \rtimes Q$ using results from commutative algebra developed in the study of $\Sigma$-theory. $\Sigma$-theory was pioneered by Bieri and Strebel that used it to classify all finitely presented metabelian groups in \cite{B-S}. The structure
of the first $\Sigma$-invariant introduced in \cite{B-S} was latter linked by Bieri and Groves to the valuation theory from commutative algebra. They   proved that the complement of  $\Sigma$ in the character sphere  is a rationally defined spherical polyhedron, see \cite{B-G2}.  Later $\Sigma$-invariants were developed for general (non-metabelian) groups and they are often referred to as BNSR-invariants. Some recent results on this topic can be found in \cite{DawidK}, \cite
{K-M}, \cite{W-Z}, \cite{Matt}.

   Let $G = A \rtimes Q$ be a group, where $A$ and $Q$ are abelian. We view $A$ as a right $\mathbb{Z} Q$-module via conjugation i.e. the operation $+$ in $A$ is the restriction of the group operation in $G$ to $A$, the $Q$ action is conjugation ( on the right) i.e. the action of $q \in Q$ on $a \in A$ is  $ a \circ q = q^{-1} a q$. If $G$ is finitely generated, then $Q$ is finitely presented and hence $A$ is finitely generated as a $\mathbb{Z} Q$-module.

  For a ring $R$ we denote by $Krulldim(R)$ the Krull dimension of $R$ i.e. the maximal length $k$ of a chain of prime ideals $P_0 < P_1 < \ldots < P_k$ in $R$.

 {\bf Main Theorem} {\it  Let $G = A \rtimes Q$ be a group, where $A$ and $Q$ are abelian,  $Q = \mathbb{Z}^s$, $s \geq 2$.
We view $A$ as a right $\mathbb{Z} Q$-module via conjugation and assume that

1) $A$ is a cyclic  $\mathbb{Z} Q$-module, say $A \simeq \mathbb{Z} Q/ I$,  $A$  is a $\mathbb{Z}$-torsion-free  integral domain and $Krull dim (A) = 2$;

2)  for every prime number $p$ the ring $A/ pA$ is  an infinite  integral domain;

3) the image of a non-trivial element of $Q$ in the field of fractions of $A$ is not algebraic over $\mathbb{Q}$. In particular $C_Q(A) = 1_Q$;

4) $G$ is finitely presented.

Then $G$ is not a self-similar group.}

\medskip
 
  We observe that the condition that $A$ is $\mathbb{Z}$-torsion-free, i.e. $A$ has zero characteristic,  is important, as in \cite{K-S} were constructed examples of $G$ transitive self-similar with $A$ of Krull dimension bigger than 1 but  $A$ is of finite exponent.

  The core of the proof of the Main Theorem  is based on the technical Theorem  \ref{oldA1} that describes possible structural  restrictions on virtual endomorphisms. The proof of Theorem \ref{oldA1}  uses substantially $\Sigma$-theory.
  We call the rings $A$ that satisfy condition 3 from Theorem \ref{oldA1}  homothety rigid rings. In section \ref{HRR} we show that the domain $A$ from the Main Theorem  is a homothety rigid ring and the starting point is an old theorem of Puiseux-Newton  that parametrizes an algebraic curve in the plane using power series.

We first prove our Main Theorem assuming an extra  condition that $G$ is virtual-endomorphism finite, see Theorem \ref{MainThm}.

\medskip 
{\bf Definition} {\it  Let $G  = A \rtimes Q$, with $A$ and $Q$ abelian, $G$ finitely generated.    Consider  a finite set of virtual endomorphisms   $$f^{(i)} : A_i \rtimes Q_i \to G,$$  such that for $1 \leq i \leq k$ we have 
   
   1) $f^{(i)} (A_i) \subseteq A$,  
   
   2) there is NOT a positive integer $m_i$ such that $m_i A \subseteq A_i$ and $f^{(i)}(m_i A) = 0$,
   
   3)  $f^{(i)}({Q_i}) \subseteq Q$, 
   
   4) $f_0^{(i)} = f^{(i)} |_{Q_i}$ is injective.
   
  We say that $G  = A \rtimes Q$ is virtual-endomorphism-finite if for any finite set of virtual endomorphisms as above    we have that $\{f_0^{(i)} \}_{1 \leq i \leq k}$ generates a finite group of injective homomorphisms $\widetilde{Q} \to Q$, where  $\widetilde{Q}$ is a subgroup of finite index in $\cap_{1 \leq i \leq k} \ Q_i$. }
  
  \medskip

  In Section \ref{Proof-main} we prove  that the assumptions of the Main Theorem  imply that  $G$ is virtual-endomorphism finite.
  
   In Section \ref{examples} we consider  a special example $G = A \rtimes Q$  that satisfies the Main Theorem, where  $$A = \mathbb{Z}[x^{\pm 1}, 1/(x+1)], Q = \langle q_1, q_2 \rangle \simeq \mathbb{Z}^2$$ and $Q$ acts on $A$ via conjugation with $q_1$ acting by multiplication with $x$ and $q_2$ acting by multiplication with $x+1$.   The group  $\mathbb{Z} \wr \mathbb{Z} \simeq \mathbb{Z}[x^{\pm 1}] \rtimes \langle q_1 \rangle$ embeds in $G$. The group $G$ was our original failed attempt to embed $\mathbb{Z} \wr \mathbb{Z}$ in a finitely presented self-similar group   and it was the motivation behind the results in this paper.

  As the paper uses substantially methods and ideas from commutative algebra and $\Sigma$-theory
  we include preliminary section on these topics.

{\bf Acknowledgements} The first named author was partially supported by grant CNPq 305457/2021-7 and the second named author was supported by a PhD grant CNPq 141727/2021-7.

\section{Preliminaries on self-similar groups and virtual endomorphisms}

Let $\mathcal{T}_m$ be the $m$-ary tree, that starts with a unique root and every vertex has precisely $m$ descendents. We write $\mathcal{T}_m^{(0)}, \ldots$,$\mathcal{T}_m^{(m-1)}$  for the $m$-ary subtrees of $\mathcal{T}_m$ that start at the vertices in the first layer of $\mathcal{T}_m$. Let $G$ be a group acting on the tree in the way it preserves descendents. For every $g \in G$ we have a decomposition
\begin{equation} \label{decomposition} 
g = (g_0, \ldots, g_{m-1}) \sigma
\end{equation}
where $\sigma$ is a permutation in $S_m$ that describes the action of $g$ on the first layer of the tree $\mathcal{T}_m$ and each $g_i$ acts on $\mathcal{T}_m$ by fixing the root and all vertices outside  $\mathcal{T}_m^{(i)}$. A group $G$ is self-similar if for every $g$ the elements $g_0, \ldots, g_{m-1}$, called states of $g$,  belong to $G$ i.e. $G$ is state closed. We say that $G$ is a transitive self-similar group if it acts transitively on the first layer of $\mathcal{T}_m$.

A virtual endomorphism is a group homomorphism $f : H \to G$, where $H$ is a subgroup of finite index in $G$. It is called simple if there is no non-trivial normal subgroup $K$ of $G$ such that $K \subseteq H$ and $f(K) \subseteq K$.

\begin{theorem} \cite{N-S}
$G$ is a transitive self-similar group if and only if there is a simple virtual endomorphism   $f : H \to G$.
\end{theorem}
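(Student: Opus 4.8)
The plan is to set up an explicit dictionary between faithful transitive self-similar actions of $G$ on $\mathcal{T}_m$ and index-$m$ virtual endomorphisms of $G$, and then to check that on both sides ``simple'' corresponds to ``faithful''; write $v_0,\dots,v_{m-1}$ for the first-layer vertices, so $v_i$ is the root of $\mathcal{T}_m^{(i)}$. For ($\Rightarrow$), suppose $G$ acts faithfully and self-similarly on $\mathcal{T}_m$, transitively on the first layer, and put $H=\mathrm{Stab}_G(v_0)$; transitivity on the $m$ first-layer vertices gives $[G:H]=m$. For $g\in H$ the permutation $\sigma$ in~\eqref{decomposition} fixes $0$, so the state $g_0$ acts on $\mathcal{T}_m^{(0)}$, which we identify with $\mathcal{T}_m$; state-closedness gives $g_0\in G$, and the composition law for states shows that $f\colon H\to G$, $f(g)=g_0$, is a homomorphism, i.e.\ a virtual endomorphism. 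To see $f$ is simple, take a normal subgroup $K$ of $G$ with $K\subseteq H$ and $f(K)\subseteq K$, and prove by induction on $n$ that $K$ fixes the $n$-th layer of $\mathcal{T}_m$ pointwise, so that $K=1$ by faithfulness. For $n=1$, $K$ fixes $v_0$, and normality of $K$ together with transitivity of $G$ on the first layer forces $K$ to fix every $v_j$. In the inductive step each $k\in K$ then has the form $k=(k_0,\dots,k_{m-1})$ with trivial top permutation, so it suffices to show every state $k_j$ lies in $K$ (then $k_j$ fixes layer $n-1$ by induction, hence $k$ fixes layer $n$). For $j=0$ this is $k_0=f(k)\in K$; for $j\neq 0$, pick $g\in G$ with $g(v_j)=v_0$ and compute that the $0$-state of $gkg^{-1}$ equals $g_j k_j g_j^{-1}$, where $g_j\in G$ is the $j$-th state of $g$. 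Since $gkg^{-1}\in K\subseteq H$, this conjugate of $k_j$ equals $f(gkg^{-1})\in K$, and normality of $K$ yields $k_j\in K$.

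For ($\Leftarrow$), let $f\colon H\to G$ be a virtual endomorphism with $[G:H]=m$ and fix a right transversal $x_0=1,x_1,\dots,x_{m-1}$ of $H$ in $G$. Writing $x_i g=h_i(g)\,x_{\sigma_g(i)}$ with $h_i(g)\in H$ defines the coset permutation $\sigma_g\in S_m$ and the elements $h_i(g)=x_i g x_{\sigma_g(i)}^{-1}\in H$, which obey the cocycle identities $\sigma_{gg'}=\sigma_{g'}\circ\sigma_g$ and $h_i(gg')=h_i(g)\,h_{\sigma_g(i)}(g')$. Define $\Phi\colon G\to\Aut(\mathcal{T}_m)$ recursively by
\[
\Phi(g)=\bigl(\Phi(f(h_0(g))),\dots,\Phi(f(h_{m-1}(g)))\bigr)\,\sigma_g,
\]
and check, using the cocycle identities and the homomorphism property of $f$, that $\Phi$ is a group homomorphism; the resulting action is self-similar by construction and transitive on the first layer because $G$ acts transitively on $H\backslash G$, while a different choice of transversal only conjugates $\Phi$ inside $\Aut(\mathcal{T}_m)$.

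It remains to show $\Phi$ is faithful; this is where simplicity enters and, I expect, the main point to get right. Let $N=\ker\Phi$, a normal subgroup of $G$. If $g\in N$ then $\sigma_g=\mathrm{id}$, so $h_i(g)=x_i g x_i^{-1}\in H$ for all $i$; taking $i=0$ gives $g\in H$, hence $N\subseteq H$. Moreover every first-layer state $\Phi(f(h_i(g)))$ of $\Phi(g)$ is trivial, so $f(h_i(g))\in N$; for $i=0$ this gives $f(N)\subseteq N$. Simplicity of $f$ forces $N=1$, so $\Phi$ is faithful. Conversely, for \emph{any} self-similar action of $G$ the kernel is visibly contained in $\mathrm{Stab}_G(v_0)$ and $f$-invariant, and the argument of the first paragraph shows it is the largest normal subgroup of $G$ with those two properties (the ``$f$-core''); hence ``simple'' is precisely the condition equivalent to faithfulness, which ties the two directions together. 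The one genuinely delicate ingredient — both in the first paragraph and in identifying $\ker\Phi$ with the $f$-core — is the layer-by-layer propagation, where normality of $K$ must be used twice: first to spread ``$K$ fixes $v_0$'' to ``$K$ fixes the whole first layer'', and then to upgrade ``a conjugate of the state $k_j$ lies in $K$'' to ``$k_j$ lies in $K$''.
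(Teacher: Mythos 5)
The paper does not prove this statement; it is quoted from \cite{N-S} as a known result, so there is no in-paper argument to compare against. Your proof is correct and is essentially the standard Nekrashevych--Sidki argument: in one direction you take $H=\mathrm{Stab}_G(v_0)$ with $f(g)=g_0$ and use normality plus state-closedness to propagate an $f$-invariant normal subgroup down the layers (your computation that the $0$-state of $gkg^{-1}$ is $g_jk_jg_j^{-1}$ is the key point and is right); in the other direction you build the coset/cocycle representation $\Phi$ and identify $\ker\Phi$ as an $f$-invariant normal subgroup contained in $H$, which simplicity kills. The only thing to keep an eye on is consistency of conventions (left versus right action on the tree, and $\sigma_{gg'}=\sigma_{g'}\circ\sigma_g$ from the right coset action), which your write-up handles correctly.
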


This result was recently generalized to intransitive actions.

\begin{theorem} \cite{Brazil}, \cite{PhD}
$G$ is a self-similar group acting with $k$ orbits on $\mathcal{T}_m$ if and only if there are virtual endomorphisms   $f_i : H_i \to G$ for $ 1 \leq i \leq k$ such that there is no non-trivial normal subgroup $K$ of $G$ such that $K \subseteq \cap_{ 1 \leq i \leq k} H_i$ and $f_i(K) \subseteq K$ for $ 1 \leq i \leq k$.
\end{theorem}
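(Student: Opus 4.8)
The plan is to prove the two implications separately; the statement is the intransitive counterpart of the Nekrashevych--Sidki correspondence, so the argument should run along the same lines, with a family of orbit representatives in the first layer of $\mathcal{T}_m$ playing the role of the single basepoint. Throughout, for $g$ acting on the tree and $v$ a vertex I write $\phi(g,v)$ for the state of $g$ at $v$, and recall the state cocycle $\phi(gh,v)=\phi(g,v)\,\phi(h,v\cdot g)$ and the fact that the state at a level-$(n{+}1)$ vertex is the level-$n$ state of the state at its level-$1$ ancestor.

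\textbf{From a self-similar action to the virtual endomorphisms.} Assuming $G$ acts faithfully and self-similarly on $\mathcal{T}_m$ with first-layer orbits $O_1,\dots,O_k$, I would pick $v_i\in O_i$ and set $H_i=\mathrm{Stab}_G(v_i)$; since $O_i$ is finite, $[G:H_i]=|O_i|<\infty$, so $H_i$ is finite-index. By self-similarity $\phi(g,v_i)\in G$ for $g\in H_i$, and because elements of $H_i$ fix $v_i$ the state cocycle restricts to multiplicativity, so $f_i:=\phi(\cdot,v_i)|_{H_i}\colon H_i\to G$ is a virtual endomorphism. It remains to verify the non-existence clause: given $K\trianglelefteq G$ with $K\subseteq\cap_{1\le i\le k}H_i$ and $f_i(K)\subseteq K$ for all $i$, I want $K=1$. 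First I would show $K$ fixes the whole first layer: any first-layer vertex $w$ equals $v_i\cdot g$ for some $i$ and $g\in G$, and for $\kappa\in K$ one has $w\cdot\kappa=v_i\cdot(g\kappa g^{-1})\,g=v_i\cdot g=w$ since $g\kappa g^{-1}\in K\subseteq H_i$ fixes $v_i$. Then the cocycle yields $\phi(\kappa,w)=\phi(g,v_i)^{-1}\,f_i(g\kappa g^{-1})\,\phi(g,v_i)$, which lies in $K$ by normality of $K$ in $G$. Propagating these two facts down the tree by induction on the level (the state of $\kappa$ at a level-$(n{+}1)$ vertex being the level-$n$ state of an element of $K$, namely the state of $\kappa$ at its level-$1$ ancestor), I conclude every $\kappa\in K$ acts trivially on all of $\mathcal{T}_m$; faithfulness then forces $K=1$.

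\textbf{From the virtual endomorphisms to a self-similar action.} Conversely, given $f_i\colon H_i\to G$ with the stated property, I would put $m_i=[G:H_i]$, $m=m_1+\dots+m_k$, fix right transversals $T_i=\{t^{(i)}_0=1,\dots,t^{(i)}_{m_i-1}\}$ of $H_i$ in $G$, and let the first layer of $\mathcal{T}_m$ be a disjoint union of blocks $B_1,\dots,B_k$ with $B_i$ identified with the coset space $H_i\backslash G$. The $G$-action is defined recursively: $g$ sends $H_it^{(i)}_j\in B_i$ to $H_it^{(i)}_jg=H_it^{(i)}_{j\cdot g}$, and its state there is declared to be the element $f_i\big(t^{(i)}_j\,g\,(t^{(i)}_{j\cdot g})^{-1}\big)\in G$ acting on the subtree by the very same recipe one level lower. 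One checks, via the standard wreath-recursion identity applied to the transversal cocycles $t^{(i)}_j g (t^{(i)}_{j\cdot g})^{-1}$, that this assignment is a homomorphism $G\to\Aut(\mathcal{T}_m)$; it is self-similar by construction, and each $B_i$ is $G$-invariant and $G$-transitive, so the action has exactly $k$ orbits on the first layer. Writing $N$ for the kernel: $N$ fixes every first-layer vertex, in particular $H_i\cdot 1\in B_i$, so $N\subseteq H_i$; and the state of $g\in N$ there is $f_i(g)$, which must act trivially on the subtree and hence lies in $N$, so $f_i(N)\subseteq N$. Thus $N$ is a normal subgroup of the kind excluded by hypothesis, so $N=1$, the action is faithful, and $G$ is self-similar with $k$ orbits.

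\textbf{Main obstacle.} I expect the genuinely technical points --- both routine in self-similar group theory but demanding care --- to be: in the converse direction, checking that the recursively defined assignment really is a homomorphism, i.e.\ the wreath-recursion bookkeeping of the cocycles $t^{(i)}_j g (t^{(i)}_{j\cdot g})^{-1}$ handled simultaneously across all blocks and levels; and in the forward direction, the inductive descent showing that a normal subgroup $K$ with $f_i(K)\subseteq K$ acts trivially on the whole tree, where normality of $K$ in $G$ is exactly what is needed to absorb the conjugations by $\phi(g,v_i)$ that appear when moving between vertices of a common orbit.
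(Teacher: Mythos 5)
The paper does not prove this theorem itself; it cites it as a known result from Dantas--Santos--Sidki and from Santos's thesis, so there is no internal proof to compare against. Judged on its own, your argument is correct and is the standard intransitive version of the Nekrashevych--Sidki correspondence: in the forward direction you take $H_i$ to be the stabilizer of an orbit representative $v_i$ in the first layer and $f_i$ the state map at $v_i$, and you use normality of $K$ twice, once to show $K$ fixes the whole first layer (since $g\kappa g^{-1}\in K\subseteq H_i$) and once to absorb the conjugation by $\phi(g,v_i)$ in the identity $\phi(\kappa,w)=\phi(g,v_i)^{-1}f_i(g\kappa g^{-1})\phi(g,v_i)$, after which the inductive descent on levels plus faithfulness gives $K=1$. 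In the converse you build the tree of degree $m=\sum_i[G:H_i]$ with first layer $\bigsqcup_i H_i\backslash G$ and wreath recursion driven by the transversal cocycles through $f_i$, and you correctly identify the kernel $N$ of the resulting representation as a normal subgroup contained in every $H_i$ with $f_i(N)\subseteq N$, so the simplicity hypothesis forces $N=1$, the action is faithful and state-closed, and each block $H_i\backslash G$ is a $G$-orbit, giving exactly $k$ orbits. This is precisely the argument in the cited sources, and your identification of the two genuinely technical points (the cocycle bookkeeping for the homomorphism check and the role of normality in the descent) is accurate. The only minor caveat is interpretive: the theorem as stated fixes a degree $m$, and your converse produces a specific $m=\sum_i[G:H_i]$; one should read the statement with an existential quantifier over $m$, which is the intended meaning.
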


The idea behind the virtual endomorphisms is that each one represents one orbit under the action of $G$ on the first level of the tree.

\section{Preliminaries on $\Sigma$-theory and commutative algebra} \label{Sigma}

\subsection{$\Sigma$-theory}
Let $Q$ be a finitely generated abelian group.
For $\chi \in Hom(Q, \mathbb{R}) \setminus \{ 0 \}$ consider the monoid 
$$
Q_{\chi} = \{ q \in Q \ | \ \chi(q) \geq 0 \}.$$
In $Hom(Q, \mathbb{R}) \setminus \{ 0 \}$ there is an equivalence relation $\sim$  given by $\chi_1 \sim \chi_2$ if and only if there is a positive real number $r$ such that $\chi_1 = r \chi_2$. By definition  the character sphere of $Q$ is
$$
S(Q) = Hom (Q, \mathbb{R}) \setminus \{ 0 \} / \sim $$
and $[\chi]$ is the equivalence class of $ \chi \in Hom(Q, \mathbb{R}) \setminus \{ 0 \}$  i.e. $ [\chi] = \mathbb{R}_{>0} \chi$.

Let $A$ be a finitely generated $\mathbb{Z} Q$-module.
The Bieri-Strebel invariant $\Sigma_A(Q)$ was defined in \cite{B-S} as
$$
\Sigma_A(Q) = \{ [\chi] \in S(Q) \ | \ A \hbox{ is finitely generated as } \mathbb{Z} Q_{\chi}-\hbox{module} \}.$$
The classification of finitely presented metabelian groups is described in the following result.

\begin{theorem} \cite{B-S} \label{fin-pres-0} Let $ 1 \to A \to G \to Q \to 1$ be a short exact sequence of groups with $A$ and $Q$ abelian, $G$ finitely generated. Then the following conditions are equivalent:

1) $G$ is finitely presented;

2) $G$ is of homological type $FP_2$;

3) $A$ is 2-tame as $\mathbb{Z} Q$-module, i.e. $S(Q) = \Sigma_A(Q) \cup - \Sigma_A(Q)$.
\end{theorem}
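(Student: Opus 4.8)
The plan is to prove the cycle of implications $(1)\Rightarrow(2)\Rightarrow(3)\Rightarrow(1)$. The implication $(1)\Rightarrow(2)$ is immediate, as any finitely presented group is of type $FP_2$ over $\mathbb{Z}$. For the other two, the central tool is the Lyndon--Hochschild--Serre spectral sequence of $1\to A\to G\to Q\to 1$,
\[
E^2_{p,q}=H_p\bigl(Q;\,H_q(A;\mathbb{Z})\bigr)\;\Longrightarrow\;H_{p+q}(G;\mathbb{Z}),
\]
together with the fact that, $A$ being abelian, $H_1(A)=A$ and $H_2(A)=\Lambda^2 A$. A harmless first reduction: replacing $Q$ by a finite-index free abelian subgroup changes neither finite presentability nor type $FP_2$ (both are commensurability invariants) and identifies the relevant $\Sigma$-invariants, so I assume $Q=\mathbb{Z}^n$.

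For $(2)\Rightarrow(3)$ I argue contrapositively. Since $\mathbb{Z}Q$ is Noetherian and $A$ is finitely generated over it, resolving $A$ by finitely generated free $\mathbb{Z}Q$-modules and applying $-\otimes_{\mathbb{Z}Q}\mathbb{Z}$ shows that $H_p(Q;A)$ and $H_p(Q;\mathbb{Z})$ are finitely generated abelian groups for all $p$. Tracing this through the spectral sequence --- the only differentials landing in $E_{0,2}=(\Lambda^2 A)_Q$ being $d_2\colon H_2(Q;A)\to(\Lambda^2 A)_Q$ and $d_3$ out of a subquotient of $H_3(Q;\mathbb{Z})$ --- one sees that $G$ is of type $FP_2$ if and only if the coinvariants $(\Lambda^2 A)_Q=\Lambda^2 A\otimes_{\mathbb{Z}Q}\mathbb{Z}$ form a finitely generated abelian group. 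So suppose $A$ is not $2$-tame: some $[\chi]$ has $[\chi],[-\chi]\notin\Sigma_A(Q)$, i.e. $[\chi]\in\Sigma_A(Q)^c\cap\bigl(-\Sigma_A(Q)^c\bigr)$. This last set is rationally defined (an intersection of rationally defined polyhedral sets, by Bieri--Groves, see \cite{B-G2}), so it contains a primitive integral character $\chi\colon Q\twoheadrightarrow\mathbb{Z}$, giving $Q=\ker\chi\oplus\langle q\rangle$ with $\chi(q)=1$. Choosing a finitely generated $\mathbb{Z}[\ker\chi]$-submodule $B$ of $A$ that generates $A$ over $\mathbb{Z}Q$, the simultaneous failure of finite generation of $A$ over $\mathbb{Z}Q_\chi$ and over $\mathbb{Z}Q_{-\chi}$ is precisely what forces the ``overlap'' classes built from $B\wedge(B\circ q^d)$ to stay non-trivial and independent in $(\Lambda^2 A)_Q$ for infinitely many $d$; hence $(\Lambda^2 A)_Q$ is not finitely generated and $G$ is not $FP_2$.

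For $(3)\Rightarrow(1)$ I use tameness constructively. Since $S(Q)$ is compact and $\Sigma_A(Q)$ open, the covering $\Sigma_A(Q)\cup-\Sigma_A(Q)=S(Q)$ admits a finite subcover, yielding characters $\chi_1,\dots,\chi_r$ with $A$ finitely generated over each $\mathbb{Z}Q_{\chi_i}$. Fixing a finite generating set $a_1,\dots,a_k$ of $A$ over $\mathbb{Z}Q$, the relation module $R=\ker(\mathbb{Z}Q^k\twoheadrightarrow A)$ is ``locally'' finitely generated over each monoid ring $\mathbb{Z}Q_{\chi_i}$, and a covering/Mayer--Vietoris argument on $S(Q)$ promotes this to finite generation of $R$ over $\mathbb{Z}Q$. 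Consequently $A\otimes_{\mathbb{Z}}A$, and hence its quotient $\Lambda^2 A$, is finitely generated over $\mathbb{Z}Q$, so $(\Lambda^2 A)_Q$ is a finitely generated abelian group and $G$ is of type $FP_2$ by the spectral sequence. Finally, assembling the explicit data --- the generators $a_i$ of $A$, generators of $Q$, the commutator relations $[a_i,a_j]=1$, and finitely many module relations generating $R$ --- produces a finite presentation of $G$, closing the cycle and reproving, as a by-product, that $FP_2$ coincides with finite presentability for metabelian groups.

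The main obstacle is $(2)\Rightarrow(3)$: converting a purely homological finiteness condition into the sharp valuation-theoretic statement that $\Sigma_A(Q)^c$ contains no antipodal pair. Two sub-difficulties stand out --- reducing an arbitrary bad direction to a \emph{discrete} character, which I would handle by invoking rationality of $\Sigma_A(Q)^c$ and of $-\Sigma_A(Q)^c$, and the explicit combinatorial construction of infinitely many independent classes in $(\Lambda^2 A)_Q$ out of the two-sided failure of finite generation of $A$. The construction in $(3)\Rightarrow(1)$ is longer but causes no essential difficulty, since one knows exactly which finite data to extract.
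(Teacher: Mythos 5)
The paper does not prove this theorem at all: it is quoted verbatim from Bieri--Strebel \cite{B-S}, so there is no internal proof to compare with; what matters is whether your sketch would actually reprove their result, and it would not. The central claim on which both of your nontrivial implications rest --- that $G$ is of type $FP_2$ if and only if the coinvariants $(\Lambda^2 A)_Q$ form a finitely generated abelian group --- is false. What the Lyndon--Hochschild--Serre argument you describe actually shows is that $(\Lambda^2 A)_Q$ is finitely generated if and only if $H_2(G;\mathbb{Z})$ is finitely generated, and finite generation of $H_2(G;\mathbb{Z})$ is strictly weaker than $FP_2$, even for metabelian groups. Concretely, take $Q=\langle t\rangle\simeq\mathbb{Z}$ and $A=\mathbb{Z}[1/6]$ with $t$ acting by multiplication by $2/3$. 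Here $A\subseteq\mathbb{Q}$ is locally cyclic, so $\Lambda^2 A=0$ and $H_2(G;\mathbb{Z})$ is finitely generated (in fact trivial); yet $A$ is not finitely generated over $\mathbb{Z}Q_{\chi}$ for either of the two characters of $Q$ (the monoid rings involved are $\mathbb{Z}[1/3]$ and $\mathbb{Z}[1/2]$), so $A$ is not $2$-tame and $G=\mathbb{Z}[1/6]\rtimes_{2/3}\mathbb{Z}$ is the classical example of a finitely generated metabelian group that is not finitely presented and not $FP_2$. Your criterion would certify it as $FP_2$, so the implication $(2)\Rightarrow(3)$ as you argue it is broken (your ``overlap classes'' in $(\Lambda^2 A)_Q$ cannot exist when $\Lambda^2 A=0$), and $(3)\Rightarrow(1)$, which feeds the same criterion back into the spectral sequence, breaks with it. The correct module-theoretic avatar of $2$-tameness in the Bieri--Strebel theory is finite generation of $A\otimes_{\mathbb{Z}}A$ as a $\mathbb{Z}Q$-module under the \emph{diagonal} action, not finite generation of the $Q$-coinvariants of $\Lambda^2 A$ over $\mathbb{Z}$.

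There is a second, independent gap in your $(3)\Rightarrow(1)$ step: the finite data you propose to assemble (module generators $a_i$ of $A$, generators of $Q$, finitely many module relations generating $R$, and the relations $[a_i,a_j]=1$) do not present $G$. One needs all relations $[a_i\circ u,\,a_j\circ v]=1$ for $u,v\in Q$, and the entire difficulty of Bieri--Strebel's proof is to show that, under $2$-tameness, finitely many of these conjugated commutation relations imply the rest (their construction uses the finite covering of $S(Q)$ to exhibit $G$ via ascending HNN-extensions over finitely generated subgroups). Without that argument, your assembly step would equally ``finitely present'' $\mathbb{Z}\wr\mathbb{Z}$, where $A=\mathbb{Z}Q$ is cyclic with $R=0$. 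So the proposal does not close the cycle; to repair it you would need both the correct tensor-square characterization of $2$-tameness and the genuinely constructive presentation argument of \cite{B-S}, neither of which is a routine spectral-sequence computation.
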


A group $G$ is of type $FP_2$ if the trivial $\mathbb{Z} G$-module $\mathbb{Z}$ has a projective resolution where all projectives in dimension $\leq 2$ are finitely generated. This is equivalent to the relation module of $G$ with respect to a finite generating set being finitely generated as $\mathbb{Z} G$-module, where $G$ acts via conjugation. In general finite presentability implies type $FP_2$ but there are  special groups that are $FP_2$ but are not finitely presented (but they are not metabelian).

Let $R$ be a commutative ring with unity. A valuation $v : R \to \mathbb{R}_{\infty}$ is a map such that 

1) $v(0) = \infty$,

2) $v(ab) = v(a) + v(b)$ for all $a,b \in R$,

3) $v( a + b) \geq {\min} \{ v(a), v(b) \}$ for all $a,b \in R$. 

Note that $v^{-1} (\infty)$ is a prime ideal in $R$ that is not necessarily the zero one.

By definition 
$$\Sigma_A^c(Q) = S(Q) \setminus \Sigma_A(Q).$$ 

\begin{theorem} \cite[Thm. 8.1]{B-G2} \label{thm-valuation1} Let $Q$ be a finitely generated abelian group and $A$ be a finitely generated $\mathbb{Z} Q$-module. Then $[\chi] \in \Sigma_A^c(Q)$ if and only if there is a  valuation $v : \mathbb{Z} Q/ ann_{\mathbb{Z} Q} A \to \mathbb{R}_{\infty}$ such that the restriction of $v$ on the image of $Q$ is induced by $\chi$.
\end{theorem}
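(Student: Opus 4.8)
The plan is to prove the two implications separately. For the ``if'' direction, put $R=\mathbb{Z}Q/\mathrm{ann}_{\mathbb{Z}Q}(A)$ and suppose $v\colon R\to\mathbb{R}_{\infty}$ is a valuation with $v(\bar q)=\chi(q)$ for all $q\in Q$, where $\bar q$ is the image of $q$. Then the image of $\mathbb{Z}Q_{\chi}$ in $R$ lies in the subring $R_{\geq 0}=\{r:v(r)\geq 0\}$, since $v(\bar q)=\chi(q)\geq 0$ for $q\in Q_{\chi}$ and $v(n)\geq 0$ for $n\in\mathbb{Z}$. Assume for contradiction that $A$ is finitely generated over $\mathbb{Z}Q_{\chi}$, say by $a_1,\dots,a_k$; these then also generate $A$ over $R$, and $A$ is a faithful $R$-module. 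Choose $q_0\in Q$ with $\chi(q_0)>0$. For each $m\geq 1$ write $\bar q_0^{-m}a_i=\sum_j\lambda^{(m)}_{ij}a_j$ with $\lambda^{(m)}_{ij}\in\mathbb{Z}Q_{\chi}$, so the $k\times k$ matrix $\bar q_0^{-m}I-M_m$, with $M_m$ the image of $(\lambda^{(m)}_{ij})$ in $R_{\geq 0}$, annihilates $(a_1,\dots,a_k)^{\top}\in A^k$. Multiplying by the adjugate and using faithfulness gives $\det(\bar q_0^{-m}I-M_m)=0$ in $R$; writing this determinant as the characteristic polynomial of $M_m$ at $\bar q_0^{-m}$, all its coefficients below the leading term have $v\geq 0$, so comparing the value $-mk\,\chi(q_0)$ of $\bar q_0^{-mk}$ with the value $\geq -m(k-1)\chi(q_0)$ of the remaining terms forces $k\leq k-1$, a contradiction. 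Hence $[\chi]\in\Sigma_A^c(Q)$.

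For the ``only if'' direction I would start from $[\chi]\in\Sigma_A^c(Q)$, i.e.\ $A$ not finitely generated over $\mathbb{Z}Q_{\chi}$ (the torsion subgroup of $Q$ plays no role, both $\chi$ and any valuation killing it, so I treat $Q$ as free of finite rank with basis $e_1,\dots,e_n$). Since the $\mathbb{Z}Q$-action on $A$ factors through $R$ and $A$ is finitely generated over $R$, a tower argument shows $R$ itself is not finitely generated as a module over the subring $\bar S\subseteq R$ which is the image of $\mathbb{Z}Q_{\chi}$; and because $R$ is finitely generated as a $\bar S$-algebra (adjoin the images of the $e_k$ and their inverses), this is equivalent to $R$ not being integral over $\bar S$. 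As each $\bar e_k$ or $\bar e_k^{-1}$ lies in $\bar S$, and a unit $u$ with $u^{-1}\in\bar S$ is integral over $\bar S$ only when $u\in\bar S$, there is a group element $t=\bar q_i$ with $\chi(q_i)<0$, $t\notin\bar S$ (so $t$ is not integral over $\bar S$) and $t^{-1}\in\bar S$. Passing to $R/\mathrm{nilrad}$ and then to $R/\mathfrak p$ for a minimal prime $\mathfrak p$ chosen so that the image of $t$ is still not integral over the image of $\bar S$ (such $\mathfrak p$ exists, otherwise a product of the monic relations over the various minimal primes would exhibit $t$ as integral over $\bar S$), I may assume $R$ is a finitely generated domain over $\mathbb{Z}$; I keep the name $R$ for this domain and $\bar S$, $t$ for the images, and the valuation built on it will be pulled back to the original ring with value $\infty$ on $\mathfrak p$, which is allowed.

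Assume first $\chi$ is rational. Then $Q_{\chi}$ is a finitely generated monoid (Gordan's lemma), so $\bar S$ is a finitely generated $\mathbb{Z}$-algebra, hence a Noetherian domain, and $t^{-1}$ is a nonzero non-unit of $\bar S$, so $t^{-1}$ lies in a maximal ideal $\mathfrak m$. The Noetherian local domain $\bar S_{\mathfrak m}$ is dominated by a rank-one discrete valuation ring of $\mathrm{Frac}(\bar S)$, which I extend — keeping rank one, by a Gauss extension across a transcendence basis followed by an algebraic extension — to a valuation $v$ of $K=\mathrm{Frac}(R)$. Then $v\geq 0$ on $\bar S$ and $v(t)<0$, while $v$ vanishes on the images of $\ker\chi$ (these and their inverses lie in $\bar S$); writing $Q=\ker\chi\oplus\langle t_0\rangle$ with $\chi(t_0)=1$ gives $v|_Q=\gamma\,\chi$ with $\gamma=v(\bar t_0)>0$ (forced by $\chi(q_i)<0$ and $v(t)<0$), so $[v|_Q]=[\chi]$; restricting $v$ to $R$ and pulling back finishes the rational case, after rescaling if one wants $v|_Q=\chi$ exactly. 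For irrational $\chi$ the ring $\mathbb{Z}Q_{\chi}$ is not Noetherian, so I would instead exhaust $Q_{\chi}$ by finitely generated submonoids $P_1\subseteq P_2\subseteq\cdots$ whose cones become dense in $\{\chi\geq 0\}$, apply the Noetherian argument over each $\mathbb{Z}P_j$ (reducing to one fixed domain quotient of $R$ along a subsequence by pigeonhole) to obtain rank-one valuations $v_j\geq 0$ on the image of $\mathbb{Z}P_j$, normalize them on a fixed finite generating set of $R$, and pass to a limit $v$ in the quasi-compact space of real valuations; then $v\geq 0$ on $\bar S$ and $v|_Q\neq 0$, and since $\mathbb{R}_{\geq 0}\chi$ is exactly the set of characters nonnegative on $\{\chi\geq 0\}$ one again gets $[v|_Q]=[\chi]$.

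The main obstacle is not the mere existence of a valuation separating $t$ from $\bar S$ — that is the classical valuative criterion for integral dependence — but arranging simultaneously that the restriction to $Q$ is \emph{proportional to $\chi$} and that the valuation is \emph{$\mathbb{R}$-valued}. The first requirement is what forces the non-integral element to be taken as a group element $\bar q_i$ with $\chi(q_i)<0$, so that the whole ``$\chi\geq 0$'' half of $Q$ is swallowed by the valuation ring; the second is handled by the domination-by-a-DVR construction together with rank-preserving extension of valuations, and, in the irrational case, by the compactness argument, where I expect the bookkeeping to be heaviest.
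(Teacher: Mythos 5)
The first thing to say is that the paper does not prove this statement at all: Theorem \ref{thm-valuation1} is imported verbatim from Bieri--Groves \cite{B-G2} (Thm.~8.1), so there is no internal proof to compare yours against; your proposal has to be judged as an independent proof of a cited result. Your ``if'' direction is correct and essentially complete: the determinant trick gives $\det(\bar{q}_0^{-m}I-M_m)=0$ in $R=\mathbb{Z}Q/\mathrm{ann}_{\mathbb{Z}Q}A$ because $A$ is faithful over $R$, all non-leading coefficients have $v\geq 0$ (using that $v(\mathbb{Z})\geq 0$ is automatic), and $v(\bar{q}_0^{-mk})=-mk\chi(q_0)$ is strictly smaller than the value of every other term, so the ultrametric equality yields a finite value for $v(0)=\infty$, a contradiction; already $m=1$ suffices.

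The ``only if'' direction is a sketch, but in my reading a sound one: the chain ``$A$ not finitely generated over $\mathbb{Z}Q_{\chi}$ $\Rightarrow$ $R$ not integral over $\bar{S}$ $\Rightarrow$ some Laurent generator $t$ with $\chi<0$ and $t^{-1}\in\bar{S}$ is non-integral'', the passage to a minimal prime keeping $t$ non-integral, and, for rational $\chi$, domination of $\bar{S}_{\mathfrak{m}}$ by a DVR followed by a rank-preserving (Gauss, then finite algebraic) extension do combine to give $v\geq 0$ on $\bar{S}$, $v=0$ on the image of $\ker\chi$ and $v(t)<0$, whence $v|_{Q}=\gamma\chi$ with $\gamma>0$. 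The thinnest points, which you assert rather than argue, are in the irrational case: you must choose the exhausting monoids $P_j$ so that for each basis vector $e_k$ they contain whichever of $e_k^{\pm 1}$ has $\chi\geq 0$ (both if $\chi(e_k)=0$), so that the non-integral generator again has its inverse in the image of $\mathbb{Z}P_j$; and the reason the limit valuation is nonzero on $Q$ is that, after rescaling each $v_j$ so that its minimum over the finitely many $\bar{e}_k^{\pm1}$ equals $-1$, this minimum (and the uniform lower bounds needed for compactness of the pointwise-limit space) persists in the limit, while $v\geq 0$ on the image of $Q_{\chi}$ forces $v|_{Q}\in\mathbb{R}_{\geq 0}\chi$. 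With these details written out, and with the standard imported facts (Gordan's lemma, existence of a DVR dominating a Noetherian local domain, Chevalley-type extension of valuations) cited, your argument is a legitimate proof along classical valuation-theoretic lines, organized differently from Bieri--Groves' original derivation, which obtains the statement through their polyhedrality machinery for sets of characters induced by valuations.
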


Actually \cite[Thm. 8.1]{B-G2} is slightly more general as it treats modules over $R Q$, where $R$ is a commutative  ring with unity  and $v(R) \geq 0$. Note that for $R = \mathbb{Z}$ the condition $v(\mathbb{Z}) \geq 0$ is automatic. We state the result in the form above as we will need it in this form later.

{\bf Example}
Set $Q = \langle x,y \rangle \simeq \mathbb{Z}^2$ and  $A = \mathbb{Z} [x^{ \pm 1}, y^{ \pm 1}]/ ( y - x - 1)$. Let $$w : A \to \mathbb{R}_{\infty}$$ be a valuation and $v = w | _{\mathbb{Z}} : \mathbb{Z} \to \mathbb{R}_{\infty}$ , $\chi = w |_Q$. Then $w ( y - x - 1) = w(0) = \infty$, so there are 3 possibilities:

a) $w(x) = w(y) \leq w(1) = 0$, hence $\chi(x) = \chi(y) \leq {0}$ corresponds to the ray $\{ (\lambda, \lambda) = (\chi(x), \chi(y)) \ | \ \lambda \leq 0 \}$;

b) $w(y) = w(1) = 0 \leq w(x)$, hence $\chi(y) = 0 \leq \chi(x)$ corresponds to the ray $\{ ( \lambda, 0) = (\chi(x), \chi(y))  \ | \ \lambda \geq 0 \}$;

c) $w(x) = w(1)= 0  \leq w(y)$, hence $\chi(x) = 0 \leq \chi(y)$ corresponds to  the ray $\{ (0, \lambda)= (\chi(x), \chi(y)) \ | \ \lambda \geq {0} \}$.

Thus we have 3 rays that start at the point $(0,0)$. Projecting to $S(Q)$ we obtain that
$$\Sigma_A^c(Q) = \{[\chi_0], [\chi_1], [\chi_2]\} $$
where $\chi_0(x) = \chi_0(y) = -1$, $\chi_1(x) = 1, \chi_1(y) = 0$ and $\chi_2(x) = 0, \chi_2(y)  = 1$.

\begin{corollary} \label{biject} 
Let $Q$ be a finitely generated abelian group with a subgroup $\widetilde{Q}$ of finite index, $A$ be a finitely generated $\mathbb{Z} Q$-module and $B$ is a $\mathbb{Z} \widetilde{Q}$-submodule of $A$ such that $[A : B] < \infty$. Then there is a bijection
$$\tau = \tau_{A, B, Q, \widetilde{Q}} : \Sigma_A^c(Q) \to \Sigma_{B}^c(\widetilde{Q})$$
given by restriction i.e. $\tau([\chi]) = [\widetilde{\chi}]$, where $\widetilde{\chi} = \chi |_{\widetilde{Q}}$.
\end{corollary}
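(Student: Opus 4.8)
The plan is to treat Corollary~\ref{biject} in two stages: first a bijection of the character spheres, then the matching $\Sigma^c_A(Q)\leftrightarrow\Sigma^c_B(\widetilde Q)$. For the spheres, write $n=[Q:\widetilde Q]<\infty$; since $Q$ is abelian, $q^n\in\widetilde Q$ for every $q\in Q$. Hence a character $\chi\in Hom(Q,\mathbb R)$ vanishing on $\widetilde Q$ satisfies $n\chi=0$, so $\chi=0$, and restriction $Hom(Q,\mathbb R)\to Hom(\widetilde Q,\mathbb R)$ is injective; it is also surjective, because applying $Hom(-,\mathbb R)$ to $0\to\widetilde Q\to Q\to Q/\widetilde Q\to 0$ and using that $\mathbb R$ is divisible (hence an injective $\mathbb Z$-module) while $Hom(Q/\widetilde Q,\mathbb R)=0$ shows it is an isomorphism. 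It is equivariant for scaling by positive reals and sends nonzero to nonzero, so it descends to a bijection $\rho\colon S(Q)\to S(\widetilde Q)$, $\rho([\chi])=[\widetilde\chi]$ with $\widetilde\chi:=\chi|_{\widetilde Q}$, and one has $\widetilde Q_{\widetilde\chi}=\widetilde Q\cap Q_\chi$.

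It then suffices to prove, for each $\chi$, that $A$ is finitely generated as a $\mathbb Z Q_\chi$-module if and only if $B$ is finitely generated as a $\mathbb Z\widetilde Q_{\widetilde\chi}$-module; then $\tau:=\rho|_{\Sigma^c_A(Q)}$ is the claimed bijection onto $\Sigma^c_B(\widetilde Q)$. One implication is easy: if $B$ is finitely generated over $\mathbb Z(\widetilde Q\cap Q_\chi)$ and $a_1,\dots,a_k$ represent the finite group $A/B$, then $A=B+\sum_i\mathbb Z a_i$ is finitely generated over $\mathbb Z(\widetilde Q\cap Q_\chi)$, a fortiori over the larger ring $\mathbb Z Q_\chi$, so $[\chi]\in\Sigma_A(Q)$.

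The reverse implication is the main obstacle, because $\mathbb Z Q_\chi$ need not be Noetherian and, when $\chi$ has dense image, is not even module-finite over $\mathbb Z\widetilde Q_{\widetilde\chi}$, so one cannot just descend a finite generating set. Here I would use the valuation criterion (Theorem~\ref{thm-valuation1}) and argue contrapositively. First, $B$ is finitely generated over the Noetherian ring $\mathbb Z\widetilde Q$ (indeed $\mathbb Z Q$ is free of finite rank over $\mathbb Z\widetilde Q$ and $A$ is finitely generated over $\mathbb Z Q$). Assume $[\widetilde\chi]\in\Sigma^c_B(\widetilde Q)$; Theorem~\ref{thm-valuation1} yields a valuation $w$ on $\mathbb Z\widetilde Q/\mathrm{ann}_{\mathbb Z\widetilde Q}(B)$ restricting to $\widetilde\chi$ on $\widetilde Q$. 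Pull $w$ back to $\mathbb Z\widetilde Q$; its kernel prime $\mathfrak p$ contains $\mathrm{ann}_{\mathbb Z\widetilde Q}(B)\supseteq\mathrm{ann}_{\mathbb Z\widetilde Q}(A)=\mathrm{ann}_{\mathbb Z Q}(A)\cap\mathbb Z\widetilde Q$. Since $\mathbb Z Q$ is a finite, hence integral, extension of $\mathbb Z\widetilde Q$, the ring $\mathbb Z Q/\mathrm{ann}_{\mathbb Z Q}(A)$ is integral over $\mathbb Z\widetilde Q/(\mathrm{ann}_{\mathbb Z Q}(A)\cap\mathbb Z\widetilde Q)$, so Lying Over provides a prime $\mathfrak P\subseteq\mathbb Z Q$ with $\mathfrak P\supseteq\mathrm{ann}_{\mathbb Z Q}(A)$ and $\mathfrak P\cap\mathbb Z\widetilde Q=\mathfrak p$. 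Now $w$ induces a valuation on $\mathrm{Frac}(\mathbb Z\widetilde Q/\mathfrak p)$, and $\mathrm{Frac}(\mathbb Z Q/\mathfrak P)$ is a finite algebraic extension of it; by Chevalley's extension theorem $w$ extends to a valuation $v'$ of $\mathrm{Frac}(\mathbb Z Q/\mathfrak P)$, whose value group, being an algebraic extension of that of $w$, still embeds order-preservingly in $\mathbb R$, so $v'$ is $\mathbb R_\infty$-valued. Composing $\mathbb Z Q\to\mathbb Z Q/\mathfrak P\hookrightarrow\mathrm{Frac}(\mathbb Z Q/\mathfrak P)\xrightarrow{v'}\mathbb R_\infty$ gives a valuation $v$ on $\mathbb Z Q$ that is $\infty$ exactly on $\mathfrak P\supseteq\mathrm{ann}_{\mathbb Z Q}(A)$, hence factors through $\mathbb Z Q/\mathrm{ann}_{\mathbb Z Q}(A)$; and for $q\in Q$, $q^n\in\widetilde Q$ gives $n\,v(q)=v(q^n)=w(q^n)=\widetilde\chi(q^n)=\chi(q^n)=n\chi(q)$, so $v|_Q$ is induced by $\chi$. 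By Theorem~\ref{thm-valuation1} again, $[\chi]\in\Sigma^c_A(Q)$; equivalently $[\chi]\in\Sigma_A(Q)\Rightarrow[\widetilde\chi]\in\Sigma_B(\widetilde Q)$. Together with the easy implication and the bijection $\rho$, this finishes the proof. An alternative to the valuation-extension step is to check the equivalence by hand for rational $\chi$, where $\mathbb Z Q_\chi$ is Noetherian and module-finite over $\mathbb Z\widetilde Q_{\widetilde\chi}$, and then invoke that $\Sigma^c_A(Q)$ and $\rho^{-1}(\Sigma^c_B(\widetilde Q))$ are rational spherical polyhedra that agree on the dense set of rational points and are therefore equal.
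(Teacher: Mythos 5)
Your proof is correct, but it takes a different route from the paper's. The paper disposes of Corollary~\ref{biject} in two lines by factoring the restriction map as $\tau_{A,B,\widetilde Q,\widetilde Q}\circ\tau_{A,A,Q,\widetilde Q}$: the first factor is a bijection because a finite-index submodule has the same $\Sigma^c$ over the same group (essentially \cite[Prop.~2.2]{B-S}), and the second because passing to a finite-index subgroup of $Q$ does not change $\Sigma^c$, which is quoted from Theorem~\ref{thm-valuation1} or \cite[Prop.~2.3]{B-S}. You instead identify $S(Q)$ with $S(\widetilde Q)$ directly and prove the equivalence of the finite-generation conditions in one combined argument: the implication $[\widetilde\chi]\in\Sigma_B(\widetilde Q)\Rightarrow[\chi]\in\Sigma_A(Q)$ by the elementary coset-representative argument (which also quietly supplies the needed fact that $B$ is finitely generated over the Noetherian ring $\mathbb Z\widetilde Q$, so that $\Sigma_B(\widetilde Q)$ makes sense), and the converse by taking a valuation on $\mathbb Z\widetilde Q/\mathrm{ann}_{\mathbb Z\widetilde Q}(B)$ furnished by Theorem~\ref{thm-valuation1}, lifting its kernel prime through the integral extension $\mathbb Z\widetilde Q\subseteq\mathbb Z Q$ by Lying Over, and extending the valuation along the finite extension of fraction fields (Chevalley, with the value group staying inside $\mathbb R$ because the extension of value groups is torsion), then checking $v|_Q=\chi$ via $q^n\in\widetilde Q$. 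In effect you reprove the two Bieri--Strebel facts the paper cites; the paper's argument is shorter by citation, while yours is self-contained modulo standard valuation theory and makes explicit how a valuation witnessing $[\widetilde\chi]\in\Sigma^c_B(\widetilde Q)$ lifts to one witnessing $[\chi]\in\Sigma^c_A(Q)$. Your alternative closing remark (checking rational points and invoking rational polyhedrality) would also work but leans on Theorem~\ref{val-val1}-type results, which is heavier than needed here.
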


\begin{proof} The map $ \tau_{A, B, Q, \widetilde{Q}}$ can be decomposed as the composition map  $\tau_{A, B, \widetilde{Q}, \widetilde{Q}} \circ \tau_{A, A, Q, \widetilde{Q}}$. Since $[A : B] < \infty$ we have that $\tau_{A, B, \widetilde{Q}, \widetilde{Q}}$ is a bijection. Since $[Q : \widetilde{Q}] < \infty$, by Theorem \ref{thm-valuation1}  or by  \cite[Prop. 2.3]{B-S}  we have that $\tau_{A, A, Q, \widetilde{Q}}$ is a bijection.
\end{proof}

\begin{lemma} \label{auxiliar}
Let $Q$ be a finitely generated abelian group with a subgroup $\widetilde{Q}$ of finite index and $A = \mathbb{Z} Q/ I$ be an integral domain. Let $B \not= 0$ be a $\mathbb{Z} \widetilde{Q}$-submodule of $A$. Then
$\Sigma_A^c(\widetilde{Q}) = \Sigma_{B}^c(\widetilde{Q})$.
\end{lemma}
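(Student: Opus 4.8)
Note that, unlike in Corollary~\ref{biject}, here $B$ is allowed to have infinite index in $A$, so one cannot simply restrict characters; the argument must exploit that $A$ is a domain. The plan is to pass to the Bieri--Groves valuation criterion (Theorem~\ref{thm-valuation1}): for a finitely generated $\mathbb{Z}\widetilde{Q}$-module $M$, the complement $\Sigma_M^c(\widetilde{Q})$ is determined entirely by the quotient ring $\mathbb{Z}\widetilde{Q}/ann_{\mathbb{Z}\widetilde{Q}}(M)$ together with the image of $\widetilde{Q}$ in it, so the whole statement reduces to the claim that $A$ and $B$ have equal annihilators in $\mathbb{Z}\widetilde{Q}$.

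First I would check that $B$ is finitely generated over $\mathbb{Z}\widetilde{Q}$, which is needed to invoke Theorem~\ref{thm-valuation1} for $B$. Since $A = \mathbb{Z}Q/I$ is cyclic over $\mathbb{Z}Q$ and $\mathbb{Z}Q$ is finitely generated as a $\mathbb{Z}\widetilde{Q}$-module (because $[Q:\widetilde{Q}] < \infty$), $A$ is finitely generated over $\mathbb{Z}\widetilde{Q}$; and $\mathbb{Z}\widetilde{Q}$ is a Noetherian ring since $\widetilde{Q}$ is finitely generated, so its submodule $B$ is finitely generated too. Next comes the key step: $ann_{\mathbb{Z}\widetilde{Q}}(A) = ann_{\mathbb{Z}\widetilde{Q}}(B)$. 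The $\mathbb{Z}\widetilde{Q}$-action on $A$ factors through the ring map $\varphi\colon \mathbb{Z}\widetilde{Q} \hookrightarrow \mathbb{Z}Q \twoheadrightarrow \mathbb{Z}Q/I = A$, and since $A$ is unital, $ann_{\mathbb{Z}\widetilde{Q}}(A) = \ker\varphi \subseteq ann_{\mathbb{Z}\widetilde{Q}}(B)$, the inclusion because $B \subseteq A$. Conversely, fixing $0 \neq b_0 \in B$, any $r \in \mathbb{Z}\widetilde{Q}$ killing $B$ satisfies $\varphi(r)\, b_0 = r \cdot b_0 = 0$ in the domain $A$, whence $\varphi(r) = 0$; so the two annihilators coincide, and $\mathbb{Z}\widetilde{Q}/ann_{\mathbb{Z}\widetilde{Q}}(A)$ and $\mathbb{Z}\widetilde{Q}/ann_{\mathbb{Z}\widetilde{Q}}(B)$ are literally the same ring, with the same image of $\widetilde{Q}$.

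With this in hand the conclusion is immediate: by Theorem~\ref{thm-valuation1}, for $[\chi] \in S(\widetilde{Q})$ one has $[\chi] \in \Sigma_A^c(\widetilde{Q})$ iff there is a valuation on this common ring whose restriction to the image of $\widetilde{Q}$ is induced by $\chi$ iff $[\chi] \in \Sigma_B^c(\widetilde{Q})$, so $\Sigma_A^c(\widetilde{Q}) = \Sigma_B^c(\widetilde{Q})$. I do not expect a genuine obstacle here; the only mild care points are the finite generation of $B$ (so that the valuation theorem legitimately applies) and the essential use of both hypotheses $B \neq 0$ and ``$A$ a domain'' in the annihilator computation, without which the equality of annihilators — and hence the lemma — would fail.
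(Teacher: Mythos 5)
Your proposal is correct, but it organizes the argument differently from the paper. You reduce everything to the fact that, for a finitely generated $\mathbb{Z}\widetilde{Q}$-module, the set $\Sigma^c(\widetilde{Q})$ depends only on the quotient ring $\mathbb{Z}\widetilde{Q}/ann_{\mathbb{Z}\widetilde{Q}}(-)$ together with the image of $\widetilde{Q}$ --- which is exactly how Theorem \ref{thm-valuation1} is phrased, and is also the content of \cite[(1.3)]{B-S2} that the paper uses in the proof of Theorem \ref{oldA1} --- and then you check $ann_{\mathbb{Z}\widetilde{Q}}(A) = ann_{\mathbb{Z}\widetilde{Q}}(B) = I \cap \mathbb{Z}\widetilde{Q}$ by cancelling a nonzero $b_0 \in B$ in the domain $A$; your finite-generation checks (finite index of $\widetilde{Q}$ in $Q$, Noetherianity of $\mathbb{Z}\widetilde{Q}$) are precisely the care points needed to apply the valuation criterion to both modules. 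The paper instead proves two inclusions: $\Sigma_B^c(\widetilde{Q}) \subseteq \Sigma_A^c(\widetilde{Q})$ via \cite[Prop. 2.2]{B-S}, and for the converse it extends $\widetilde{\chi}$ to a character $\chi$ of $Q$, uses \cite[Prop. 2.3]{B-S} and Theorem \ref{thm-valuation1} to produce a valuation on $A$, restricts that valuation to $C = \mathbb{Z}\widetilde{Q}/(I \cap \mathbb{Z}\widetilde{Q})$, and finally identifies $C$ with the cyclic submodule $b\,\mathbb{Z}\widetilde{Q} \subseteq B$ --- the same exploitation of the domain hypothesis as your annihilator computation. Your route buys a shorter, one-shot argument that avoids the monotonicity and finite-index propositions of \cite{B-S} and the explicit extension/restriction of characters and valuations, at the cost of leaning entirely on the annihilator-only formulation of the valuation criterion; both arguments use $B \neq 0$ and the integrality of $A$ in the same essential way.
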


\begin{proof}
We view $A$ as a $\mathbb{Z} \widetilde{Q}$-module via the restriction of the $Q$-action to $\widetilde{Q}$. By \cite[Prop. 2.2]{B-S}
$$ \Sigma_{A}^c(\widetilde{Q})  =  \Sigma_{B}^c(\widetilde{Q}) \cup  \Sigma_{A/ B}^c(\widetilde{Q}), \hbox{ in particular } \Sigma_{B}^c(\widetilde{Q}) \subseteq  \Sigma_{A}^c(\widetilde{Q}).$$ 

We aim to prove that  $ \Sigma_{A}^c(\widetilde{Q}) \subseteq  \Sigma_{B}^c(\widetilde{Q})$.
Let $[\widetilde{\chi}] \in \Sigma_A^c(\widetilde{Q})$ and let $\chi : Q \to \mathbb{R}$ be the homomorphism that is the unique extension of $\widetilde{\chi}$.
Since $\widetilde{Q}$ has finite index in $Q$ we have   by  \cite[Prop. 2.3]{B-S}  that $[\chi] \in \Sigma_A^c(Q)$. Then there is a valuation
$$v : A = \mathbb{Z} Q/ I \to \mathbb{R}_{\infty}$$
whose restriction on the image of $Q$ is induced by $\chi$. Then $v( a q) =  v(a) + \chi(q)$ for $a \in A, q \in Q$
and for $C = \mathbb{Z} \widetilde{Q}/ (I \cap \mathbb{Z} \widetilde{Q})$
$$v |_{C} :  C \to \mathbb{R}_{\infty}$$
is a valuation such that $v(c q) = v(c) + \widetilde{\chi} (q)$ for $c \in C, q \in \widetilde{Q}$. Then  by Theorem \ref{thm-valuation1} $[\widetilde{\chi}] \in \Sigma_C^c(\widetilde{Q})$, hence  $\Sigma_A^c(\widetilde{Q}) \subseteq \Sigma_C^c(\widetilde{Q})$ .

Let $b \in B \setminus \{ 0 \}$. Then $B_0 = b \mathbb{Z} \widetilde{Q} \subseteq B$ and so $\Sigma_{B_0}^c(\widetilde{Q}) \subseteq \Sigma_B^c(\widetilde{Q})$. Note that 
$B_0 \simeq \mathbb{Z} \widetilde{Q}/ (I \cap  \mathbb{Z} \widetilde{Q}) = C$ sending $b q$ to the image of $q$  in $C$ for $q \in \widetilde{Q}$. Here we used that $A$ is an integral domain. Thus 
$$\Sigma_A^c(\widetilde{Q}) \subseteq \Sigma_C^c(\widetilde{Q}) = \Sigma_{B_0}^c(\widetilde{Q}) \subseteq \Sigma_B^c(\widetilde{Q}).  $$
\end{proof}

We recall some definitions and results. Let $S$ be a fixed subgroup of $\mathbb{R}$ with respect to the operation +.
 We call $C \subseteq \mathbb{R}^s$ a {\it convex} polyhedron if
 $$C = H_1 \cap H_2 \cap \ldots \cap H_r,$$
 where
 $$H_i = \{ (x_1, \ldots, x_s) \in \mathbb{R}^s  \ | \   \sum_j q_{i,j} {x_j }\geq a_i \},$$
  where $s$ is the torsion-free rank of $Q$. $H_i$ is {\it rationally defined over $S$} if each $q_{i,j} \in \mathbb{Q}$ and $a_i \in S$.
  The dimension $dim C$ is the dimension of the affine space spanned by $C$.
  
  A polyhedron (rationally defined over $S$)  is
  $$\Delta = C_1 \cup \ldots \cup C_n$$
  where each $C_i$ is convex polyhedron ({rationally} defined {over} $S$). We say that $\Delta$ is homogeneous of dimension $m$ if each $C_i$ has dimension $m$.
  
  Let $R$ be a commutative ring with unity and $v : R \to \mathbb{R}_{\infty}$ be a valuation. Let $Q$ be a finitely generated abelian group and $A$ an algebra over the group algebra $RQ$ given by a ring homomorphism $\kappa : RQ \to A$. Then
  $\Delta^v_A(Q) \subseteq Q^* = Hom(Q, \mathbb{R})$ is given by
  $$\Delta^v_A(Q) = \{ \chi : Q \to \mathbb{R} \ | \ \hbox{ there is a valuation } w : A \to \mathbb{R}_{\infty}, w \circ \kappa |_R = v,  w \circ \kappa |_Q = \chi \}
  $$
  
  \begin{theorem} \cite[Thm. 5.2]{B-G2} \label{val-val1}
  Let $A$ be a domain, $k \subseteq A$ a field endowed with a valuation $v : k \to \mathbb{R}_{\infty}$ and $Q$ a finitely generated  subgroup of the unit group $U(A)$ of $A$. Then $\Delta^v_A(Q) \leq Q^* = Hom(Q, \mathbb{R})$ is a homogeneous polyhedron of dimension that equals the transcendence degree of $k(Q)$ over $k$ and rationally defined over $v(k^{\times}) \subseteq \mathbb{R}$.
  \end{theorem}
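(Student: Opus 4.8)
The plan is to reduce first to the case in which $A$ is itself a field, and then to argue by induction on the transcendence degree.

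\textit{Reduction to a field.} First I would show that $\Delta^v_A(Q)=\Delta^v_{k(Q)}(Q)$, where $k(Q)\subseteq\mathrm{Frac}(A)$ denotes the subfield generated by $k$ and the group $Q$. The inclusion $\supseteq$ is Chevalley's extension theorem: a valuation of the field $k(Q)$ extending $v$ prolongs to a valuation of $\mathrm{Frac}(A)$, whose restriction to $A$ is a valuation of $A$ over $v$ with the same restriction to $Q$. For $\subseteq$, given a valuation $w\colon A\to\mathbb{R}_\infty$ over $v$ with $w|_Q=\chi$, the prime $\mathfrak p=w^{-1}(\infty)$ meets neither $k$ (a valuation of a field has trivial support) nor $Q$ (its elements are units), so $w$ descends to a valuation of $\mathrm{Frac}(A/\mathfrak p)$ over $v$; composing a valuation of $\mathrm{Frac}(A)$ centred at $\mathfrak p$ with this residue valuation, and embedding the resulting rank-$\le 2$ value group into $\mathbb{R}$, produces a valuation of $\mathrm{Frac}(A)$ over $v$ still restricting to $\chi$ on $Q$ (each $q\in Q$ is a unit at $\mathfrak p$, hence contributes only through the residue part); restricting to $k(Q)$ finishes. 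So from now on $A=E$ is a finitely generated field extension of $k$, generated as a field by the finitely generated subgroup $Q\subseteq E^\times$.

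\textit{Induction on $n=\mathrm{trdeg}(E/k)$.} If $n=0$ then $E/k$ is finite; there are only finitely many valuations $w_1,\dots,w_r$ of $E$ extending $v$ (the fundamental inequality $\sum_i e_if_i\le[E:k]$), each with value group commensurable with $v(k^\times)$, so $\Delta^v_E(Q)=\{w_1|_Q,\dots,w_r|_Q\}$ is a finite set of points with all coordinates in $\mathbb{Q}\cdot v(k^\times)$, hence a $0$-dimensional polyhedron rationally defined over $v(k^\times)$. For $n\ge 1$ I would choose a transcendence basis $t_1,\dots,t_n\in Q$ of $E/k$, set $T=\langle t_1,\dots,t_n\rangle$ and $k_0=k(T)$, so that $k_0/k$ is purely transcendental and $E/k_0$ is finite. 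The Gauss (weight) valuation $w\bigl(\sum_\alpha c_\alpha t^\alpha\bigr)=\min_\alpha\bigl(v(c_\alpha)+\langle\alpha,\mu\rangle\bigr)$ realises every $\mu$, so $\Delta^v_{k_0}(T)=\mathrm{Hom}(T,\mathbb{R})$. Now $\Delta^v_E(Q)$ is fibred over $\Delta^v_{k_0}(T)$ via the restriction map $\mathrm{Hom}(Q,\mathbb{R})\twoheadrightarrow\mathrm{Hom}(T,\mathbb{R})$: a valuation of $E$ over $v$ restricts to one of $k_0$, which has only finitely many prolongations to the finite extension $E$. The crux here is to show that the finitely many combinatorial types of these prolongations, and the values they assign on $Q$, depend in a rational piecewise-linear way on the base character — a statement about polyhedral degeneration of the relevant Newton polytopes / initial ideals. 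Combined with Abhyankar's inequality $\mathrm{rk}_{\mathbb{Q}}(\Gamma_w/\Gamma_v)+\mathrm{trdeg}(\kappa_w/\kappa_v)\le n$, which caps the number of $\mathbb{Q}$-independent values $w$ can take on $Q$ and hence forces $\dim\Delta^v_E(Q)\le n$, this yields that $\Delta^v_E(Q)$ is a polyhedron of dimension $\le n$, rationally defined over $v(k^\times)$.

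\textit{Homogeneity, the main obstacle.} It remains to prove that every maximal cell has dimension exactly $n$. The idea is that an arbitrary valuation $w$ over $v$ with $\mathrm{trdeg}(\kappa_w/\kappa_v)=j$ can be composed with a Gauss-type valuation supplying $n-j$ further independent residue directions, turning it into an Abhyankar valuation (equality in Abhyankar's inequality) whose restriction to $Q$ lies in the closure of an $n$-parameter family through $w|_Q$; more structurally, one shows $\Delta^v_E(Q)$ is connected in codimension one and that its top-dimensional cells correspond to the Abhyankar valuations. I expect this purity/connectedness step to be the hard part: it is where the irreducibility of $E$ — equivalently, that $\mathrm{Spec}\,k[Q]$ is a variety of pure dimension $n$ — genuinely enters, and it is the technical core of the Bieri–Groves theorem, the finiteness and rationality being comparatively formal once Chevalley extension and the Gauss construction are in hand.
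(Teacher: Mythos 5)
This theorem is quoted in the paper from \cite{B-G2} (Thm.\ 5.2) and is not proved there, so your attempt can only be measured against the original Bieri--Groves argument; against that yardstick your outline hits the right landmarks (reduction toward the field case, Gauss valuations over a transcendence basis chosen inside $Q$, Abhyankar's inequality for the dimension bound), but it is not a proof, and one step is actually false as written. In the reduction you dispose of valuations $w : A \to \mathbb{R}_{\infty}$ with nonzero support $\mathfrak{p} = w^{-1}(\infty)$ by composing a valuation of $\mathrm{Frac}(A)$ centred at $\mathfrak{p}$ with the residue valuation and then ``embedding the resulting rank-$\le 2$ value group into $\mathbb{R}$''. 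An ordered abelian group of rank two (for instance $\mathbb{Z}\times\mathbb{Z}$ with the lexicographic order) admits no order-embedding into $\mathbb{R}$, and the restriction of the composite to $k(Q)$ is genuinely of rank two in general: although each $q \in Q$ and each element of $k$ is a unit at $\mathfrak{p}$, elements of $k(Q)$ such as $q_1 - q_2$ with $q_1 \equiv q_2 \bmod \mathfrak{p}$ do pick up the coarse component. So this construction does not produce the real-valued valuation of $k(Q)$ over $v$ realizing $\chi$ that the definition of $\Delta^v_{k(Q)}(Q)$ requires. The assertion that characters coming from valuations with support still lie in $\Delta^v_{k(Q)}(Q)$ is true, but it is itself a nontrivial piece of the Bieri--Groves theory (essentially a statement about specialization and closedness of $\Delta$), not something obtainable by this shortcut.

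Beyond that, the two statements carrying the real content are explicitly left open in your plan: (i) that the finitely many prolongations of a Gauss valuation on $k_0 = k(t_1,\dots,t_n)$ to the finite extension $E$ assign values on $Q$ depending piecewise $\mathbb{Q}$-linearly, with constants in $v(k^{\times})$, on the base character --- this is exactly what makes $\Delta^v_E(Q)$ a rationally defined polyhedron --- and (ii) homogeneity, i.e.\ that every character of $\Delta^v_E(Q)$ lies in a cell of dimension exactly $n = \mathrm{trdeg}(k(Q)/k)$, which you describe as completing an arbitrary valuation to an Abhyankar one. You label these ``the crux'' and ``the hard part'' rather than proving them, and they are precisely the technical core of \cite{B-G2}. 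So the proposal is a reasonable roadmap to the literature, but as a proof it has a genuine gap at the support-reduction step and leaves the polyhedrality and homogeneity arguments unestablished.
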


\begin{theorem}  \cite[Thm. 5.4]{B-G2} \label{Dedekind} 
Let $R$ be a Dedekind domain, $Q$ a finitely generated abelian group, $A$ a Noetherian $R Q$-algebra. Then there exists a finite set of prime ideals $\Pi$ of $R$ such that for all $P \in Spec (R) \setminus \Pi$ we have
$$\Delta^{v_P}_{A}(Q) = \Delta^0_{A}(Q)$$
where $v_P$ is the $P$-adic valuation of $R$.
\end{theorem}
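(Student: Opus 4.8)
The plan is to run a chain of standard reduction steps until we land in the hypotheses of Theorem~\ref{val-val1}, and then to observe that for all but finitely many $P$ the $P$-adic valuation cannot ``see'' the finitely many coefficients of $R$ that enter the defining inequalities of the polyhedron. First, since $A$ is Noetherian it has finitely many minimal primes $\mathfrak p_1,\dots,\mathfrak p_n$, and the prime $w^{-1}(\infty)$ attached to any valuation $w$ on $A$ contains some $\mathfrak p_i$, so $w$ factors through $A/\mathfrak p_i$; hence $\Delta^v_A(Q)=\bigcup_i\Delta^v_{A/\mathfrak p_i}(Q)$ for every valuation $v$ on $R$, and we may assume $A$ is a domain. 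If $R\to A$ has nonzero kernel $P_0$, then $P_0$ is a maximal ideal of the Dedekind domain $R$, and for $P\neq P_0$ one picks $r\in P_0\setminus P$, so that $w(\kappa(r))=w(0)=\infty\neq 0=v_P(r)$ forces $\Delta^{v_P}_A(Q)=\varnothing$, and similarly $\Delta^0_A(Q)=\varnothing$; collecting these finitely many $P_0$ into the exceptional set $\Pi$, we may assume $R\hookrightarrow A$. Replacing $Q$ by its image in $U(A)$ costs nothing, since any character occurring in a $\Delta^v$ kills the kernel of $Q\to U(A)$ and the two sets are identified under $\mathrm{Hom}(\bar Q,\mathbb R)\hookrightarrow\mathrm{Hom}(Q,\mathbb R)$, so we may assume $Q\hookrightarrow U(A)$. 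Finally we may assume $A$ is finitely generated over $RQ$, as is the case in the applications of interest (where $A$ is a quotient of $RQ$).

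Next I would pass to the fraction field. Let $k=\mathrm{Frac}(R)$ and $S=R\setminus\{0\}$. Any valuation $w$ on $A$ whose restriction to $R$ is $v_P$ (or the zero valuation) is finite on $S$, hence extends uniquely to $S^{-1}A=A\otimes_R k$ by $w(a/s)=w(a)-w(s)$, while conversely valuations on $S^{-1}A$ restrict to $A$; therefore $\Delta^v_A(Q)=\Delta^v_{A\otimes_R k}(Q)$. Writing $A_k=A\otimes_R k$, this is a finitely generated domain over the field $k$, it contains $k$, and $Q\hookrightarrow U(A)\subseteq U(A_k)$. The valuation $v_P$ extends to the $P$-adic valuation of $k$, with value group $\mathbb Z$, while the zero valuation extends to the zero valuation of $k$, with value group $\{0\}$. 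Thus the problem becomes: compare $\Delta^{v_P}_{A_k}(Q)$ and $\Delta^0_{A_k}(Q)$ inside $\mathrm{Hom}(Q,\mathbb R)$.

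By Theorem~\ref{val-val1}, applied to $k\subseteq A_k$ with $Q\le U(A_k)$, each of $\Delta^{v_P}_{A_k}(Q)$ and $\Delta^0_{A_k}(Q)$ is a homogeneous polyhedron of dimension $d=\operatorname{trdeg}_k k(Q)$ — the same $d$ for both valuations — rationally defined over $v(k^\times)$, so over $\{0\}$, i.e.\ a finite union of rational cones through the origin, when $v=0$, and over $\mathbb Z$ when $v=v_P$. It remains to see that these polyhedra coincide for almost all $P$. Fix a finite presentation $A=RQ[x_1,\dots,x_r]/(f_1,\dots,f_t)$; enlarge $\Pi$ to also contain the finitely many primes of $R$ that divide a coefficient of some $f_j$, together with the finitely many primes that have to be inverted in order to carry out over $R$ the elimination (Gröbner) computation producing the defining half-spaces of $\Delta^v$ from the $f_j$. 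For $P\notin\Pi$ every relevant coefficient $c$ has $v_P(c)=0$, so the tropical data carried by the $f_j$ over the valued field $(k,v_P)$ are literally the same as over $(k,0)$; since $\Delta^v_{A_k}(Q)$ is the output of a recipe whose only valuation-dependent inputs are these data, we conclude $\Delta^{v_P}_{A_k}(Q)=\Delta^0_{A_k}(Q)$, and unwinding the reductions gives $\Delta^{v_P}_A(Q)=\Delta^0_A(Q)$ for all $P\in\operatorname{Spec}(R)\setminus\Pi$. The one non-formal step, and the main obstacle, is precisely this ``uniformity'' claim — that $\Delta^v_{A_k}(Q)$ depends on $v$ only through the values of $v$ on a fixed finite subset of $k^\times$. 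Establishing it means either inspecting the proof of Theorem~\ref{val-val1} to locate where the constants $a_i$ of the half-spaces come from, or invoking the analogous spreading-out/constructibility principle from tropical geometry: outside a finite set of primes of $R$, the tropicalization with respect to $v_P$ of an ideal defined over $R$ agrees with its tropicalization with respect to the trivial valuation.
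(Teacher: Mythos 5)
First, note that the paper does not prove Theorem \ref{Dedekind} at all: it is imported verbatim from Bieri--Groves \cite[Thm. 5.4]{B-G2}, so your argument has to stand entirely on its own, and it does not. Your preliminary reductions are mostly sound (splitting over the finitely many minimal primes of the Noetherian ring $A$, discarding the finitely many $P$ for which $R\to A$ has nonzero kernel, replacing $Q$ by its image in $U(A)$, and extending valuations to $A\otimes_R k$ after checking finiteness on $R\setminus\{0\}$), although the step ``we may assume $A$ is finitely generated over $RQ$'' silently replaces the stated hypothesis (a Noetherian $RQ$-algebra) by a stronger one and is not a harmless reduction of the theorem as quoted.

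The genuine gap is the final step, and you identify it yourself: the claim that for all but finitely many $P$ the polyhedra $\Delta^{v_P}_{A_k}(Q)$ and $\Delta^{0}_{A_k}(Q)$ coincide \emph{is} the content of the theorem, and nothing in your argument establishes it. Theorem \ref{val-val1} only gives qualitative structure: both sets are homogeneous polyhedra of the same dimension $d=\mathrm{trdeg}_k\,k(Q)$, rationally defined over $v(k^{\times})$. Equality of dimension and rational definedness do not force equality of the sets, and Theorem \ref{val-val1} does not supply any ``recipe'' whose valuation-dependent inputs are the $v$-values of a fixed finite subset of $k^{\times}$; that uniformity statement (your Gr\"obner/tropical constructibility principle, that tropicalization with respect to $v_P$ of an ideal defined over $R$ agrees with tropicalization at the trivial valuation outside finitely many $P$) is neither precisely formulated nor proved in your write-up, and proving it is essentially equivalent to proving the theorem. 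So what you have is a correct reduction of the statement to its hard core, plus an acknowledgement that the hard core is open in your argument; as a proof it is incomplete. To close it you would need either to carry out the specialization/spreading-out argument over the Dedekind domain $R$ in detail, or simply to cite \cite[Thm. 5.4]{B-G2} as the paper does.
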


The following is a particular case of  \cite[Thm. A a)]{B-G}.

\begin{theorem} \cite[Thm. A a)]{B-G} \label{rigidity1} Let $k \subseteq K$ be an extension of fields, $Q$ a finitely generated multiplicative subgroup of $K^{\times} =  K \setminus \{ 0 \}$. Suppose no non-trivial element of $Q$ is algebraic over $k$. Then $Q^* = Hom (Q, \mathbb{R})$ {is spanned} as $\mathbb{R}$-vector {space} by $\Delta^0_K(Q)$.
\end{theorem}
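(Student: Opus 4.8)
The plan is to argue by contradiction, converting a failure of the spanning property into a nontrivial element of $Q$ that is ``invisible'' to every $\mathbb{R}$-valued valuation of $K$ trivial on $k$, and then to contradict the hypothesis via the classical characterization of algebraicity by valuations.

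First I would record that the hypothesis forces $Q$ to be torsion-free: a torsion element of $Q$ is a root of unity, hence algebraic over the prime subfield of $k$, so by hypothesis $Q$ has no nontrivial torsion; being finitely generated abelian, $Q \cong \mathbb{Z}^s$ for some $s \geq 0$, and $Q^* = \mathrm{Hom}(Q,\mathbb{R}) \cong \mathbb{R}^s$ (if $s = 0$ there is nothing to prove). Now suppose, for contradiction, that the $\mathbb{R}$-linear span $V$ of $\Delta^0_K(Q)$ is a \emph{proper} subspace of $Q^*$. Applying Theorem \ref{val-val1} with the domain $A = K$, the field $k \subseteq K$, and $v$ the trivial valuation of $k$, we get that $\Delta^0_K(Q)$ is a homogeneous polyhedron rationally defined over $v(k^{\times}) = \{0\}$, i.e. a finite union of rational polyhedral cones with apex at the origin. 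Hence $V$ is a \emph{rational} subspace of $Q^* \cong \mathrm{Hom}(Q,\mathbb{Q}) \otimes \mathbb{R}$, so its annihilator $V^{\perp} \subseteq Q \otimes \mathbb{R}$ (under the perfect pairing $Q \times Q^* \to \mathbb{R}$) is a nonzero rational subspace. Clearing denominators in a nonzero rational point of $V^{\perp}$ produces a nontrivial element $q_0 \in Q$ with $\chi(q_0) = 0$ for all $\chi \in V$, and in particular for all $\chi \in \Delta^0_K(Q)$. Unwinding the definition of $\Delta^0_K(Q)$, this says exactly that $w(q_0) = 0$ for \emph{every} valuation $w : K \to \mathbb{R}_{\infty}$ whose restriction to $k$ is trivial.

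The next, and main, step is the classical fact that for $x \in K^{\times}$ one has $w(x) = 0$ for every rank-one (equivalently, $\mathbb{R}$-valued) valuation $w$ of $K$ trivial on $k$ if and only if $x$ is algebraic over $k$. The ``only if'' direction is elementary: from a relation $a_n x^n + \cdots + a_1 x + a_0 = 0$ with $a_i \in k$, $a_0, a_n \neq 0$, $n \geq 1$, the ultrametric inequality forces the minimum of $\{\, i\, w(x) : a_i \neq 0 \,\}$ to be attained at least twice, so $w(x) = 0$ as $\mathbb{R}$ is torsion-free. The substantive ``if'' direction goes by contraposition: if $x$ is transcendental over $k$, the $x$-adic valuation of $k(x)$ is rank one and sends $x$ to $1 \neq 0$, and it extends to a rank-one valuation of $K$ by a Zorn's-lemma argument over the subfields between $k(x)$ and $K$, using at each one-generator step that a rank-one valuation admits a rank-one extension (the Gaussian extension in the transcendental case; the value group lands in the divisible hull, hence remains rank one, in the algebraic case). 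Applying this to $x = q_0$ shows that $q_0$ is algebraic over $k$, contradicting the hypothesis; hence $V = Q^*$ and $\Delta^0_K(Q)$ spans $Q^*$.

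I expect the principal obstacle to be precisely this last ingredient -- the implication from ``invisible to all real-valued valuations of $K/k$'' to ``algebraic over $k$'' -- since arbitrary extensions of valuations can raise the rank, so one must control the rank along the tower $k(q_0) \subseteq \cdots \subseteq K$. Once this is in place, the remaining pieces (torsion-freeness, the rationality of the span furnished by Theorem \ref{val-val1}, and the ultrametric computation) are routine.
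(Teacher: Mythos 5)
The paper does not prove Theorem \ref{rigidity1} at all: it is imported verbatim from Bieri--Groves \cite{B-G} as a black box, so there is no internal proof to compare yours with; what follows is an assessment of your argument on its own terms. It looks correct. The reduction is sound: torsion elements of $Q$ are roots of unity, hence algebraic over $k$, so $Q\cong\mathbb{Z}^s$; by Theorem \ref{val-val1} (applied with $A=K$ and $v$ the trivial valuation) $\Delta^0_K(Q)$ is a finite union of convex cones cut out by rational homogeneous inequalities, and the linear span of such a cone is a rational subspace (it equals the intersection of those defining hyperplanes that contain the cone, a point worth one line in a written version), so a proper span would have a nonzero rational annihilator in $Q\otimes_{\mathbb{Z}}\mathbb{R}$ and hence, after clearing denominators, a nontrivial $q_0\in Q$ with $w(q_0)=0$ for every $\mathbb{R}$-valued valuation $w$ of $K$ trivial on $k$ (note that since $K$ is a field, $w^{-1}(\infty)=0$, so these really are rank-at-most-one valuations). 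The crux is then exactly where you locate it: if $q_0$ were transcendental over $k$, the $q_0$-adic valuation of $k(q_0)$ must be extended to $K$ \emph{without leaving} $\mathbb{R}$; your handling of this (Gauss extension on transcendental steps, value group inside the divisible hull of a subgroup of $\mathbb{R}$ on algebraic steps, assembled by Zorn's lemma) is the standard and correct way to control the rank along the tower, and it yields the contradiction with the hypothesis. Two remarks: your use of Theorem \ref{val-val1} is not circular, since that result comes from the earlier Bieri--Groves paper \cite{B-G2} on which the rigidity paper \cite{B-G} itself builds, so your route parallels the actual literature; and the ``only if'' direction of your valuation criterion (the ultrametric computation) is not needed for the theorem, only the transcendental-to-nonvanishing direction is.
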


\subsection{Krull dimension of commutative rings}

We recall that   for a ring $R$ the Krull dimension of $R$ denoted by  $Krulldim(R)$ is the maximal length $k$ of a chain of prime ideals $P_0 < P_1 < \ldots < P_k$ in $R$. 

Examples: $Krulldim(\mathbb{Z}) = 1$, if $R$ is a field then $Krulldim(R) = 0$.

\begin{theorem} \cite{W} Suppose $R$ is a commutative Noetherian ring with $1$ and $R[x_1, \ldots, x_n]$ is the polynomial ring on commuting variables. Then $Krulldim(R[x_1, \ldots, x_n]) = n + Krulldim(R)$. 
\end{theorem}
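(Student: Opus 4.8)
\emph{Proof proposal.} The plan is to reduce to the one-variable case and prove $Krulldim(R[x]) = Krulldim(R) + 1$ for any commutative Noetherian ring $R$; the general statement then follows by induction on $n$, using the Hilbert basis theorem to guarantee that each $R[x_1,\ldots,x_j]$ is again Noetherian. For the one-variable statement the lower bound $Krulldim(R[x]) \geq Krulldim(R) + 1$ is immediate: given a chain of primes $\mathfrak{p}_0 \subsetneq \mathfrak{p}_1 \subsetneq \cdots \subsetneq \mathfrak{p}_d$ in $R$, the ideals $\mathfrak{p}_i R[x]$ are prime (since $R[x]/\mathfrak{p}_i R[x] \cong (R/\mathfrak{p}_i)[x]$ is a domain) and form a strictly increasing chain, and $\mathfrak{p}_d R[x] \subsetneq \mathfrak{p}_d R[x] + xR[x]$ is again prime with quotient $R/\mathfrak{p}_d$; this yields a chain of length $d+1$.

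For the upper bound it suffices to show that for every prime $\mathfrak{q} \subset R[x]$, setting $\mathfrak{p} = \mathfrak{q} \cap R$, one has $\mathrm{ht}_{R[x]}(\mathfrak{q}) \leq \mathrm{ht}_R(\mathfrak{p}) + 1$. Heights are unchanged under localization, so after replacing $R$ by $R_{\mathfrak{p}}$ (and $R[x]$ by $R_{\mathfrak{p}}[x]$, still Noetherian) we may assume $R$ is local with maximal ideal $\mathfrak{m} = \mathfrak{p}$ and $d := \mathrm{ht}(\mathfrak{m}) = Krulldim(R)$. Choose a system of parameters $a_1, \ldots, a_d \in \mathfrak{m}$, so that $\sqrt{(a_1,\ldots,a_d)} = \mathfrak{m}$ in $R$. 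Extending scalars and using $R[x]/(a_1,\ldots,a_d)R[x] \cong (R/(a_1,\ldots,a_d))[x]$ together with the fact that the nilradical of a polynomial ring $S[x]$ equals $(\mathrm{nil}\,S)[x]$, we get $\sqrt{(a_1,\ldots,a_d)R[x]} = \mathfrak{m}R[x]$. Since $\mathfrak{q} \supseteq (a_1,\ldots,a_d)R[x]$, this forces $\mathfrak{q} \supseteq \mathfrak{m}R[x]$.

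Now distinguish two cases. If $\mathfrak{q} = \mathfrak{m}R[x]$, then $\mathfrak{q}$ is minimal over the ideal $(a_1,\ldots,a_d)R[x]$, generated by $d$ elements, so Krull's generalized principal ideal (height) theorem, applied in the Noetherian ring $R[x]$, gives $\mathrm{ht}(\mathfrak{q}) \leq d$. If $\mathfrak{q} \supsetneq \mathfrak{m}R[x]$, then $\mathfrak{q}/\mathfrak{m}R[x]$ is a nonzero prime of $(R/\mathfrak{m})[x]$, a polynomial ring in one variable over a field, hence a principal ideal $(\bar g)$; lifting, $\mathfrak{q}$ is minimal over $(a_1,\ldots,a_d,g)R[x]$, an ideal with $d+1$ generators, and Krull's height theorem gives $\mathrm{ht}(\mathfrak{q}) \leq d+1$. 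In either case $\mathrm{ht}_{R[x]}(\mathfrak{q}) \leq d+1 = \mathrm{ht}_R(\mathfrak{p}) + 1$, so $Krulldim(R[x]) \leq Krulldim(R) + 1$, and with the lower bound we obtain equality; induction on $n$ finishes the proof.

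The induction on $n$ and the explicit length-$(d+1)$ chain for the lower bound are routine. I expect the main obstacle to be the upper bound: one has to pass, via localization and a system of parameters, to a situation where the given prime of $R[x]$ is minimal over an ideal with a controlled number of generators, so that Krull's height theorem applies; the point to handle with care is that when $\mathfrak{q}$ strictly contains $\mathfrak{m}R[x]$ it becomes minimal over an ideal needing only $d+1$ generators, which relies on the fibre ring $(R/\mathfrak{m})[x]$ being a PID.
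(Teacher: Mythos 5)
Your proposal is correct. Note that the paper offers no proof of this statement at all: it is quoted as a known result from Watkins \cite{W}, so there is no internal argument to compare against; what you have written is the standard textbook proof, and it holds up. The reduction to showing $\mathrm{ht}_{R[x]}(\mathfrak{q}) \leq \mathrm{ht}_R(\mathfrak{q}\cap R)+1$ is exactly what is needed for the upper bound; the localization step is legitimate because $\mathfrak{q}\cap R=\mathfrak{p}$ guarantees $\mathfrak{q}$ survives in $R_{\mathfrak{p}}[x]=(R\setminus\mathfrak{p})^{-1}R[x]$ with the same height, while $\mathrm{ht}_R(\mathfrak{p})=\dim R_{\mathfrak{p}}$; the computation $\sqrt{(a_1,\ldots,a_d)R[x]}=\mathfrak{m}R[x]$ via $\mathrm{nil}(S[x])=(\mathrm{nil}\,S)[x]$ is sound; and in both cases the minimality of $\mathfrak{q}$ over an ideal with $d$, respectively $d+1$, generators is easily verified (in the second case because any prime between $(a_1,\ldots,a_d,g)R[x]$ and $\mathfrak{q}$ must contain $\mathfrak{m}R[x]$ and hence corresponds to a prime of the PID $(R/\mathfrak{m})[x]$ squeezed between $(\bar g)$ and itself), so Krull's height theorem in the Noetherian ring $R[x]$ applies as you say. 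The induction via the Hilbert basis theorem is routine, and the only implicit assumption in the upper-bound argument, that $\dim R$ is finite, is harmless: if $\dim R=\infty$ the asserted equality already follows from your lower bound.
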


As a corollary we have that $Krulldim(\mathbb{Z}[x_1, \ldots, x_n]) = n + 1$. After localization we have for the ring  of  Laurent polynomials $\mathbb{Z}[x_1^{\pm 1} , \ldots, x_n^{\pm 1}]$ that $Krulldim(\mathbb{Z}[x_1^{\pm 1} , \ldots, x_n^{\pm 1}]) = n+1.$

The following result is a corollary of the Lying over theorem from commutative algebra, see \cite[Thm~5.9]{H}, \cite[5.10]{A-M}.

\begin{theorem}  \label{int} Suppose $A \subseteq B$ is an integral extension of commutative  rings. Then $Krulldim(A) = Krulldim(B)$.
\end{theorem}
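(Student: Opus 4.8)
The plan is to reduce the equality of Krull dimensions to the classical Going-Up (Lying Over) machinery. First I would recall that for an integral extension $A \subseteq B$, every prime of $A$ is the contraction of a prime of $B$ (Lying Over), and that given a chain of primes in $A$ lying under a prime of $B$, one can extend the chain upward in $B$ (Going Up). These are precisely the statements in \cite[Thm~5.9]{H} and \cite[5.10]{A-M} cited just above.

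For the inequality $Krulldim(A) \le Krulldim(B)$: I would start with a strict chain of primes $P_0 < P_1 < \ldots < P_k$ in $A$. By Lying Over there is a prime $Q_0$ of $B$ with $Q_0 \cap A = P_0$, and then by Going Up, applied inductively, one produces primes $Q_0 \subseteq Q_1 \subseteq \ldots \subseteq Q_k$ in $B$ with $Q_i \cap A = P_i$. Since the contractions $Q_i \cap A = P_i$ are pairwise distinct, the $Q_i$ are pairwise distinct, so the chain $Q_0 < Q_1 < \ldots < Q_k$ is strict in $B$. Hence $Krulldim(B) \ge k$, and taking the supremum over all such chains gives $Krulldim(B) \ge Krulldim(A)$.

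For the reverse inequality $Krulldim(B) \le Krulldim(A)$: I would take a strict chain $Q_0 < Q_1 < \ldots < Q_k$ of primes in $B$ and contract to $A$, obtaining primes $P_i = Q_i \cap A$ with $P_0 \subseteq P_1 \subseteq \ldots \subseteq P_k$. The key point is that the contractions are still \emph{strictly} increasing: this is the Incomparability property, namely that if $Q \subsetneq Q'$ are primes of $B$ lying over the same prime of $A$ then the extension $A/(Q\cap A) \subseteq B/Q$ would be integral with a nonzero prime $Q'/Q$ contracting to zero, contradicting the fact that in an integral extension of domains one ring is a field iff the other is. Therefore $P_0 < P_1 < \ldots < P_k$ is a strict chain in $A$, giving $Krulldim(A) \ge k$ and hence $Krulldim(A) \ge Krulldim(B)$.

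Combining the two inequalities yields $Krulldim(A) = Krulldim(B)$. The main obstacle — if one insists on a fully self-contained argument rather than quoting \cite[Thm~5.9]{H} — is verifying the Incomparability property; but since the excerpt explicitly allows citing the Lying Over theorem and its standard consequences (Going Up and Incomparability are packaged together with it in both \cite{H} and \cite{A-M}), the proof is essentially a bookkeeping exercise once those tools are invoked.
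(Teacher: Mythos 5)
Your proposal is correct and follows exactly the route the paper intends: the paper states Theorem~\ref{int} without proof as a corollary of the Lying Over theorem in \cite{H}, \cite{A-M}, and your argument (Lying Over plus Going Up for $Krulldim(A)\le Krulldim(B)$, Incomparability for the reverse inequality) is the standard way to deduce it from those references. The only minor compression is in the Incomparability step, where one should localize $B/Q$ at the nonzero elements of $A/(Q\cap A)$ before invoking the field criterion, but this is exactly the packaged statement you are allowed to cite.
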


\begin{theorem} \label{Noether}  (Noether Normalization)
Let $k$ be a field and $A$ be a finitely generated commutative $k$-algebra. Then there exist elements $y_1, \ldots, y_d \in A$ that are algebraically independent over $k$ and $A$ is a finitely generated module over the polynomial ring $S = k[y_1, \ldots, y_d]$.
\end{theorem}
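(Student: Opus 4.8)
The plan is to induct on the number of $k$-algebra generators of $A$. Write $A = k[a_1, \dots, a_n]$; if $n = 0$ there is nothing to prove, so assume $n \geq 1$ and that the statement holds for $k$-algebras generated by fewer than $n$ elements. If $a_1, \dots, a_n$ happen to be algebraically independent over $k$, then $A$ is itself a polynomial ring and we are done with $d = n$ and $y_i = a_i$. So assume there is a nonzero polynomial $F \in k[X_1, \dots, X_n]$ with $F(a_1, \dots, a_n) = 0$.

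The key step is a change of variables that converts $F$, after dividing out a leading scalar, into a \emph{monic} polynomial in one variable over the subalgebra generated by the others. Choose an integer $r$ strictly larger than the total degree of $F$, and set $b_i = a_i - a_1^{\,r^{i-1}}$ for $2 \leq i \leq n$. Substituting $a_i = b_i + a_1^{\,r^{i-1}}$ into $F$ and expanding, a monomial $X_1^{e_1}\cdots X_n^{e_n}$ of $F$ contributes a term whose highest power of $a_1$ is $a_1^{\,e_1 + e_2 r + \dots + e_n r^{n-1}}$ with scalar coefficient equal to that of the monomial. Because $r$ exceeds every exponent occurring, the integers $e_1 + e_2 r + \dots + e_n r^{n-1}$ attached to distinct monomials in the support of $F$ are pairwise distinct (this is just base-$r$ digit uniqueness), so the largest such integer $N$ is attained exactly once, with a nonzero coefficient $c \in k$. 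Hence $F(a_1, b_2 + a_1^{\,r}, \dots) = c\,a_1^{\,N} + (\text{terms of lower degree in } a_1 \text{ with coefficients in } B)$, where $B := k[b_2, \dots, b_n]$; dividing by $c$ shows $a_1$ is integral over $B$. Each $a_i$ with $i \geq 2$ is then a polynomial in $a_1$ and $b_i$, hence also integral over $B$, so $A$ is a finitely generated $B$-module.

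Finally, $B$ is generated by $n-1$ elements over $k$, so by the inductive hypothesis there are algebraically independent $y_1, \dots, y_d \in B$ with $B$ a finitely generated module over $S = k[y_1, \dots, y_d]$. Since a finitely generated module over a finitely generated $S$-module is again a finitely generated $S$-module, $A$ is a finitely generated $S$-module, which completes the induction. (If one prefers to assume $k$ infinite, the substitution $b_i = a_i - a_1^{\,r^{i-1}}$ can be replaced by a generic linear change of coordinates, but the version above works over an arbitrary field.)

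The main obstacle is the combinatorial heart of the second paragraph: verifying that with $r$ large enough the substitution really does produce a term of top $a_1$-degree with nonzero scalar coefficient, i.e.\ that no cancellation kills the leading term. Everything else is standard and can be quoted: that a generating set of integral elements makes $A$ a finite $B$-module, and that module-finiteness is transitive along $S \subseteq B \subseteq A$ (the latter being exactly the kind of integral-extension bookkeeping already invoked for Theorem~\ref{int}).
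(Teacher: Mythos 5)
Your proof is correct: it is the classical Noether normalization argument (induction on the number of algebra generators, Nagata's substitution $b_i=a_i-a_1^{r^{i-1}}$ with $r$ exceeding the total degree, base-$r$ uniqueness of the top weight to rule out cancellation, integrality of $a_1$ over $k[b_2,\dots,b_n]$, and transitivity of module-finiteness), and it works over an arbitrary field as you note. There is nothing in the paper to compare it with: Theorem~\ref{Noether} is quoted there as a standard preliminary from commutative algebra, with no proof given, and is only used through the remark that $d=\mathrm{Krulldim}(A)$ equals the transcendence degree when $A$ is a domain. One cosmetic remark: the ``top degree in $a_1$'' bookkeeping is cleanest if you perform the substitution in the polynomial ring, i.e.\ show that $F(X_1,Y_2+X_1^{r},\dots,Y_n+X_1^{r^{n-1}})$ is, up to the nonzero scalar $c$, monic in $X_1$ over $k[Y_2,\dots,Y_n]$, and only then evaluate at $a_1,b_2,\dots,b_n$; your argument phrased with elements of $A$ is the same computation and the combinatorial core is exactly as you describe.
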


{\it Remark} Note that in the above theorem since $A$ is integral over $S$ by Theorem \ref{int}  $d = Krulldim(S) = Krulldim(A)$. If furthermore $A$ is an integral domain then by Theorem \ref{Noether} $d$ is the transcendence degree of $K$ over $k$, where $K$ is the field of fractions of $A$.

\section{The main technical result}

The following result  is well known but for completeness we give a proof.

\begin{lemma} \label{prime}
Let $R$ be a commutative ring with $1$ and $J_1, \ldots , J_k$ prime ideals of $R$, $I$ an ideal of $R$ such that $I \subseteq \cup_{1 \leq i \leq k} J_i$. Then there is $i_0$ such that $I \subseteq J_{i_0}$.
\end{lemma}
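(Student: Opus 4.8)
The plan is to argue by induction on $k$ via the classical ``prime avoidance'' trick. The base case $k = 1$ is trivial. For the inductive step I would assume $k \geq 2$ and that the conclusion holds for any family of fewer than $k$ prime ideals, and I would first reduce to the case where the cover $I \subseteq \bigcup_{1 \leq i \leq k} J_i$ is \emph{irredundant}, i.e.\ $I \not\subseteq \bigcup_{j \neq i} J_j$ for each $i$: if some $J_i$ could be dropped from the union, then the inductive hypothesis applied to the remaining $k - 1$ primes finishes the proof.

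Assuming irredundancy, for each $i$ I would pick $x_i \in I$ with $x_i \notin J_j$ for all $j \neq i$, which forces $x_i \in J_i$ since $x_i \in I \subseteq \bigcup_j J_j$. Then I would consider the element
$$y = x_1 x_2 \cdots x_{k-1} + x_k \in I$$
and check that $y$ lies in no $J_i$, which contradicts $I \subseteq \bigcup_i J_i$. For $i < k$: the product $x_1 \cdots x_{k-1}$ lies in $J_i$ because the factor $x_i$ does and $J_i$ is an ideal, whereas $x_k \notin J_i$, so $y \notin J_i$. For $i = k$: we have $x_k \in J_k$, but $x_1 \cdots x_{k-1} \notin J_k$ because $J_k$ is prime and each $x_j \notin J_k$ for $j < k$; hence $y \notin J_k$ as well.

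The one place where the hypothesis genuinely bites is this last deduction — that a product of elements outside a prime ideal stays outside it — which is just the definition of primality (equivalently, that the complement of $J_k$ is multiplicatively closed). Everything else is elementary set-theoretic and ideal-theoretic bookkeeping, so I do not anticipate any real obstacle; the only care needed is getting the reduction to the irredundant case stated correctly.
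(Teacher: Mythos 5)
Your proof is correct and follows essentially the same classical prime-avoidance route as the paper: both arguments reduce to an irredundant cover (you by induction on $k$, the paper by taking a minimal counterexample), choose $x_i \in I \setminus \bigcup_{j \neq i} J_j$, observe that then $x_i \in J_i$, and exhibit an element of $I$ lying outside every $J_i$, contradicting the covering hypothesis. The only real difference is the witness element: the paper takes the symmetric sum $\sum_{j} x_1 \cdots \widehat{x_j} \cdots x_k$, which uses primality of every $J_j$, whereas your $x_1 \cdots x_{k-1} + x_k$ uses primality of $J_k$ alone — a harmless variant of the same trick.
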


\begin{proof} Without loss of generality $k \geq 2$.  Assume the result is wrong and  consider a   counter example with minimal $k$,  then $I \not\subseteq \cup_{1 \leq i \not= j \leq k} J_i$ for every $ 1 \leq j \leq k$. Thus  for every $j$  there is $x_j \in I \setminus \cup{_{1 \leq i \not= j \leq k}} J_i \subseteq J_j$. Set $y_j = x_1 \ldots \widehat{x_j} \ldots x_k \in  (I \cap ( \cap_{ 1 \leq i \not= j \leq k} J_i)) \setminus J_j$. Then $\sum_{1 \leq j \leq k} y_j \in I \setminus (\cup_{1 \leq i \leq k} J_i)$, a contradiction.
\end{proof}

\begin{lemma} \label{minus1}  Let $Q$ be a finitely generated abelian group and  $A \simeq \mathbb{Z} Q/ I$ be a domain of characteristic 0. Then  $B = A \otimes_{\mathbb{Z}} \mathbb{Q}$ has Krull dimension at most $Krulldim(A) - 1$.
 \end{lemma}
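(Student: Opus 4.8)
The plan is to compare chains of prime ideals in $B = A \otimes_{\mathbb{Z}} \mathbb{Q}$ with chains of prime ideals in $A$, showing that any chain in $B$ can be prolonged by one prime at the top once it is pulled back to $A$. First I would record the structural facts: since $A \cong \mathbb{Z} Q / I$ with $Q$ finitely generated abelian, $A$ is a finitely generated $\mathbb{Z}$-algebra, hence Noetherian of finite Krull dimension, and since $A$ is a domain of characteristic $0$ the multiplicative set $S = \mathbb{Z} \setminus \{0\}$ consists of nonzero elements of $A$, so $B = S^{-1} A$ is a localization of $A$ and $A \hookrightarrow B$. By the standard description of $\mathrm{Spec}$ of a localization, the primes of $B$ are in inclusion-preserving bijection with the primes $\mathfrak{p}$ of $A$ with $\mathfrak{p} \cap S = \emptyset$; since $\mathbb{Z} \hookrightarrow A$, these are exactly the primes with $\mathfrak{p} \cap \mathbb{Z} = (0)$.

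The heart of the argument is the claim that a prime $\mathfrak{p}$ of $A$ with $\mathfrak{p} \cap \mathbb{Z} = (0)$ is never maximal; equivalently, $C := A/\mathfrak{p}$, which is a finitely generated $\mathbb{Z}$-algebra and a domain of characteristic $0$, is never a field. Granting this, take any chain $\mathfrak{q}_0 \subsetneq \cdots \subsetneq \mathfrak{q}_k$ of primes of $B$ and let $\mathfrak{p}_i = \mathfrak{q}_i \cap A$ be the corresponding chain of primes of $A$ (all lying over $(0)$ in $\mathbb{Z}$). Since $\mathfrak{p}_k$ is not maximal, $A/\mathfrak{p}_k$ has a nonzero prime ideal, whose preimage $\mathfrak{p}_{k+1}$ in $A$ strictly contains $\mathfrak{p}_k$; this yields a chain $\mathfrak{p}_0 \subsetneq \cdots \subsetneq \mathfrak{p}_{k+1}$ in $A$, so $Krulldim(A) \geq k+1$. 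Taking the supremum over all chains of $B$ gives $Krulldim(B) \leq Krulldim(A) - 1$.

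It remains to justify the claim, and this is the only real work (and where the characteristic-$0$ hypothesis is genuinely used). If $C$ were a field it would contain $\mathbb{Q}$, so $C = C \otimes_{\mathbb{Z}} \mathbb{Q}$ would be a finitely generated $\mathbb{Q}$-algebra that is a field; by Noether normalization (Theorem \ref{Noether}) over $\mathbb{Q}$, $C$ would then be a finite field extension of $\mathbb{Q}$. Writing $C = \mathbb{Z}[c_1, \ldots, c_m]$ and clearing the denominators occurring in the minimal polynomials of the $c_i$ over $\mathbb{Q}$, I obtain $N \in \mathbb{Z}_{>0}$ such that each $c_i$ is integral over $\mathbb{Z}[1/N]$; since $\mathbb{Z}[1/N] \subseteq \mathbb{Q} \subseteq C$ we get $C = \mathbb{Z}[1/N][c_1, \ldots, c_m]$, a finitely generated $\mathbb{Z}[1/N]$-module, whence its submodule $\mathbb{Q}$ is finitely generated over the Noetherian ring $\mathbb{Z}[1/N]$ — a contradiction, since no finitely generated $\mathbb{Z}[1/N]$-submodule of $\mathbb{Q}$ can contain $1/\ell$ for a prime $\ell \nmid N$. (Alternatively one may simply invoke the fact that $\mathbb{Z}$ is a Jacobson ring, so that a maximal ideal of the finitely generated $\mathbb{Z}$-algebra $A$ contracts to a maximal ideal of $\mathbb{Z}$, which forces $\mathfrak{p} \cap \mathbb{Z} \neq (0)$.) The remaining ingredients — the localization bookkeeping and the chain manipulation — are routine.
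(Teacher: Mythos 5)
Your proof is correct, and the overall skeleton (pass to the localization $B=S^{-1}A$ with $S=\mathbb{Z}\setminus\{0\}$, pull a chain of primes of $B$ back to $A$, and show the top prime can be strictly enlarged) matches the paper's. The difference lies in the key input used to show that a prime $\mathfrak{p}$ of $A$ surviving the localization is not maximal. The paper argues by contradiction on a maximal-length chain and invokes Roseblade's theorem that every maximal ideal of the group ring $\mathbb{Z}H$ of a polycyclic group $H$ has finite index; applied to $H=Q$ this makes $A/P_d$ finite, hence killed by inverting $\mathbb{Z}\setminus\{0\}$, contradicting maximality of $V_d$ in $B$. You instead use only that $A$ is a finitely generated $\mathbb{Z}$-algebra: if $A/\mathfrak{p}$ were a field of characteristic $0$ it would be a finitely generated $\mathbb{Q}$-algebra that is a field, hence finite over $\mathbb{Q}$ by Noether normalization, and your denominator-clearing/integrality argument (or, equivalently, the Jacobson property of $\mathbb{Z}$, i.e.\ the general Nullstellensatz) rules this out. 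Your route is more self-contained and elementary --- it avoids the group-ring-specific machinery entirely and proves the slightly stronger statement that every maximal ideal of $A$ meets $\mathbb{Z}$ nontrivially --- while the paper's citation of Roseblade is shorter on the page and fits a framework (group rings of polycyclic groups) already present elsewhere in their toolkit. Both are complete; note only that for abelian $Q$ Roseblade's theorem is itself essentially the Nullstellensatz over $\mathbb{Z}$ that you are reproving by hand.
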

 
 \begin{proof}
 Note that $B \simeq A S^{-1}$ where $S = \mathbb{Z} \setminus \{ 0 \}$.
 Let $0 = V_0 \subsetneq V_1  \subsetneq \ldots  \subsetneq V_d$ be a maximal chain of prime ideals in $B$ i.e. $Krulldim(B) = d$. Note that each prime ideal in $B$ is a localization of a prime ideal in $A$ i.e.  each $V_i = P_i S^{-1}$, where  $P_i = V_i \cap A$ and  $ P_0 = 0 \subsetneq P_1 \subsetneq \ldots \subsetneq P_d $ is a chain of  prime ideals in $A$, hence $Krulldim (A) \geq d$. 
 
 Suppose that $Krulldim(A) = d$, then $P_d$ is a maximal ideal of $A$. Note that by the main result of \cite{Jim} for any polycyclic group   $H$,  every maximal ideal in the group algebra  $\mathbb{Z} H$  is of finite index.  In particular for $H = Q$ we obtain that  $P_d$ has finite index in $A$. Hence $A/ P_d$ has finite exponent, so $(A/P_d)S^{-1} = 0$, so $B = A S^{-1} = P_d S^{-1}$, a contradiction with $P_d S^{-1}$ is a maximal ideal in $B$. Thus $Krulldim(A) > d = Krulldim(B)$. 
 \end{proof}
The following is a technical result that will be used later 
to deduce strong restrictions on virtual endomorphisms of a metabelian group $G = A \rtimes Q$.

\begin{theorem} \label{oldA1} Let $s \geq 2$ be an integer, $Q \simeq \mathbb{Z}^s$ be a finitely generated abelian group and $Q_0$ a subgroup of finite index in $Q$. Let $A$ be a $\mathbb{Z} Q$-module and $A_0$ be a subgroup of finite index in $A$ that is a $\mathbb{Z} Q_0$-submodule. Let $f : A_0 \to A$ be a homomorphism of abelian groups such that $$f(a_0 q_0) = f(a_0) f_0(q_0)\hbox{ for }a_0 \in A_0, q_0 \in Q_0,$$ where $f_0: Q_0 \to Q$ is an injective homomorphism of groups.

Assume furthermore that

1) $A = \mathbb{Z} Q/ I$  is a $\mathbb{Z}$-torsion-free integral domain  of  $Krull dim (A) = 2$;

2)  for every prime number $p$ the ring $A/ pA$ is   infinite domain;

3)  If $Q = \langle x_1, \ldots, x_s \rangle$ and if there are positive integers  $\widetilde{n}, \widetilde{c}_1, \ldots, \widetilde{c}_s$ and a ring homomorphism $$\mathbb{Z} \langle x_1^{\widetilde{n}}, \ldots, x_s^{\widetilde{n}} \rangle / \mathbb{Z} \langle x_1^{\widetilde{n}}, \ldots, x_s^{\widetilde{n}} \rangle \cap I \to \mathbb{Z} \langle x_1^{\widetilde{c}_1}, \ldots, x_s^{\widetilde{c}_s} \rangle / \mathbb{Z} \langle x_1^{\widetilde{c}_1}, \ldots, x_s^{\widetilde{c}_s} \rangle \cap I $$ that sends $x_i^{\widetilde{n}}$ to $x_i^{\widetilde{c}_i}$ for $ 1 \leq i \leq s$ then $\widetilde{n} = \widetilde{c}_1 = \ldots = \widetilde{c}_s$; 

4) The image of a non-trivial element of $Q$ in the field of fractions of $A$ is never algebraic over $\mathbb{Q}$;

5) $\Sigma_{A}^c(Q)$ does not contain an one dimensional circle $S^1$, obtained by intersecting a two dimensional subspace of $Hom(Q, \mathbb{R}) \simeq \mathbb{R}^{s}$ with the unit sphere $S^{s-1}$.

 Then there is a $\mathbb{Z} Q$-submodule $M = m A + \mu A$ of $A_0$ such that $m$ is a positive integer   and $\mu \in A$ and furthermore  

a) $M$ is abelian subgroup of $A_0$ of finite index;

and either 

b1) $f(M) = 0_A = 1_G$

or

b2) there is   
 a positive integer $k$ and some finite index subgroup $Q_1$ of $Q_0$ such that  $f_0^k : Q_1 \to Q$  is the inclusion map
and  $f(mA) \subseteq mA$, $f(\mu A) \subseteq \mu A$.

\end{theorem}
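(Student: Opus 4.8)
The plan is to extract from $f$ a constraint on the linear automorphism $\psi$ of $Hom(Q,\mathbb{R})\cong\mathbb{R}^{s}$ that $f_{0}$ induces (by duality) on the character space, to analyse that constraint with the valuation-theoretic description of $\Sigma_{A}^{c}(Q)$, and finally to feed the resulting rigidity of $f_{0}$ back into $A$ via the homothety rigidity hypothesis~(3). First dispose of the trivial case $f(A_{0})=0$: then any finite-index $\mathbb{Z}Q$-submodule of $A_{0}$ of the form $mA+\mu A$ contained in $A_{0}$ (one exists since $A_{0}$ contains the $\mathbb{Z}Q$-submodule $[A:A_{0}]A$ and $A$ is Noetherian) gives case~(b1). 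So assume $f(A_{0})\neq 0$; set $m_{0}=[A:A_{0}]$, so $m_{0}A\subseteq A_{0}$. As $A$ is $\mathbb{Z}$-torsion-free it has no nonzero finite subgroup, so $f(A_{0})$ is infinite, $\ker f$ is not of finite index in $A_{0}$, hence $f$ is nonzero on $m_{0}A$; using the twisted equivariance one finds $q\in Q$ with $\nu:=f(m_{0}\bar q)\neq 0$ (where $\bar q$ is the image of $q$ in $A$), and then $f(m_{0}\,\overline{qq_{1}})=\nu\,\overline{f_{0}(q_{1})}$ for all $q_{1}$ in the finite-index subgroup $Q_{1}=[Q:Q_{0}]\cdot Q\subseteq Q_{0}$ on which $f_{0}$ is defined.

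For the $\Sigma$-constraint: $f(A_{0})$ is a nonzero $\mathbb{Z}[f_{0}(Q_{0})]$-submodule of the domain $A$, and $f_{0}(Q_{0})$ has finite index in $Q$ since an injective endomorphism of $\mathbb{Z}^{s}$ is onto after $-\otimes\mathbb{Q}$. By Lemma~\ref{auxiliar}, $\Sigma_{f(A_{0})}^{c}(f_{0}(Q_{0}))=\Sigma_{A}^{c}(f_{0}(Q_{0}))$, which by Corollary~\ref{biject} corresponds to $\Sigma_{A}^{c}(Q)$ under restriction. On the other hand $f\colon A_{0}\twoheadrightarrow f(A_{0})$ is an epimorphism of $\mathbb{Z}Q_{0}$-modules once $Q_{0}$ acts on $f(A_{0})$ through $f_{0}$, so $\Sigma_{f(A_{0})}^{c}(Q_{0})\subseteq\Sigma_{A_{0}}^{c}(Q_{0})$ by \cite{B-S}, and $\Sigma_{A_{0}}^{c}(Q_{0})=\Sigma_{A}^{c}(Q_{0})$ corresponds to $\Sigma_{A}^{c}(Q)$ again by Lemma~\ref{auxiliar} and Corollary~\ref{biject}. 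Chasing these identifications yields $T(\Sigma_{A}^{c}(Q))\subseteq\Sigma_{A}^{c}(Q)$, where $T$ is the homeomorphism of $S(Q)$ induced by $\psi$.

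Next, the geometry of $\Sigma_{A}^{c}(Q)$. By Lemma~\ref{minus1} and hypothesis~(4), $B:=A\otimes_{\mathbb{Z}}\mathbb{Q}$ is a domain with $Krulldim(B)=1$ and fraction field $K=\mathbb{Q}(Q)$ of transcendence degree $1$ over $\mathbb{Q}$. Every valuation of $\mathbb{Z}$ is, up to positive scaling, trivial or $p$-adic, so by Theorem~\ref{thm-valuation1} $\Sigma_{A}^{c}(Q)=\Delta^{0}_{B}(Q)\cup\bigcup_{p}\Delta^{v_{p}}_{B}(Q)$, a finite union by Theorem~\ref{Dedekind}; by Theorem~\ref{val-val1} each term is a homogeneous polyhedron of dimension $1$, so $\Sigma_{A}^{c}(Q)$ is a finite union of rays, containing no great circle (hypothesis~(5)), and by hypothesis~(4) with Theorem~\ref{rigidity1} the subfamily $\Delta^{0}_{B}(Q)=\Delta^{0}_{K}(Q)$ already spans $Hom(Q,\mathbb{R})$. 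Being an injective self-map of this finite set of rays, $T$ permutes them; if $k$ is the order of that permutation then $\psi^{k}$ fixes each ray of $\Sigma_{A}^{c}(Q)$, and since these rays span and, by hypothesis~(5), lie in sufficiently general position, $\psi^{k}=\lambda\cdot\mathrm{id}$ for some $\lambda\in\mathbb{Q}_{>0}$. To pin down $\lambda$: because $\nu\neq0$ and $A$ is a domain, multiplying $f(m_{0}\,\overline{qq_{1}})=\nu\,\overline{f_{0}(q_{1})}$ by $\bar q^{-1}$ shows that $\bar q_{1}\mapsto\overline{f_{0}(q_{1})}$ is a well-defined ring homomorphism $\theta\colon C\to A$ with $C=\mathbb{Z}Q_{1}/(I\cap\mathbb{Z}Q_{1})\subseteq A$. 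Since $\mathbb{Z}Q$ is a finitely generated $\mathbb{Z}Q_{1}$-module and $f_{0}(Q_{1})$ has finite index in $Q$, $A$ is integral over both $C$ and $\theta(C)=\mathbb{Z}[\overline{f_{0}(Q_{1})}]$, so by Theorem~\ref{int} and hypothesis~(1) each of $C,\theta(C)$ has Krull dimension $2$; as $C/\ker\theta\cong\theta(C)$ is a domain, $\ker\theta=0$, i.e. $\theta$ is an isomorphism. Iterating $k$ times and restricting to a cubic sublattice $\langle x_{1}^{N},\dots,x_{s}^{N}\rangle$ of $Q_{1}$, the relation $\psi^{k}=\lambda\cdot\mathrm{id}$ makes this iterated isomorphism a ring homomorphism $\mathbb{Z}\langle x_{1}^{N},\dots,x_{s}^{N}\rangle/(\cdot\cap I)\to\mathbb{Z}\langle x_{1}^{c_{1}},\dots,x_{s}^{c_{s}}\rangle/(\cdot\cap I)$ sending $x_{i}^{N}$ to $x_{i}^{c_{i}}$ with $c_{i}=N\lambda$, whence hypothesis~(3) forces $N=c_{1}=\dots=c_{s}$, so $\lambda=1$ and $\psi^{k}=\mathrm{id}$: equivalently $f_{0}^{k}$ restricted to a finite-index subgroup $Q_{1}$ of $Q_{0}$ is the inclusion. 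Finally, with $f_{0}^{k}=\mathrm{id}$ on $Q_{1}$ and the relation for $f$ on $m_{0}A$, one checks after passing to the relevant finite-index subgroups that, for $\mu$ a suitable multiple of $\nu$ and $m$ a suitable multiple of $m_{0}$, $f(mA)\subseteq mA$, $f(\mu A)\subseteq\mu A$, and $M=mA+\mu A$ is a $\mathbb{Z}Q$-submodule of $A_{0}$ of finite index (using $\nu\neq0$, $Krulldim(A)=2$, and hypothesis~(2), which makes $A/mA$ a $1$-dimensional ring in which the nonzero ideal generated by $\mu$ has finite index); this is case~(b2).

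The main obstacle is the passage, inside the geometric step, from ``$\psi$ permutes the finitely many rays of $\Sigma_{A}^{c}(Q)$'' to ``a power of $\psi$ is a homothety'': one must show that the ray configuration forced by hypotheses~(1),(2),(4),(5) is rigid enough that the only automorphisms fixing all of it are scalar, while tracking the integrality of $f_{0}$ so that the scalar $\lambda$ is a well-controlled positive rational; the subsequent elimination of $\lambda$ is precisely the purpose of the homothety rigidity hypothesis~(3) (established elsewhere via the Puiseux--Newton parametrisation of a plane curve). A secondary technical point is the careful choice of $m$ and $\mu$ in the last step so that $M=mA+\mu A$ is genuinely of finite index in $A_{0}$.
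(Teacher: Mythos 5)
Your overall strategy (an induced self-map of $\Sigma_A^c(Q)$, the Bieri--Groves polyhedral description via valuations, then hypothesis (3) to rigidify $f_0$, then a module $M=mA+\mu A$) is the paper's strategy, but two steps as written have genuine gaps. First, the geometric step: you assert that $\Sigma_A^c(Q)$ is a finite union of rays and that, since these span and ``by hypothesis (5) lie in sufficiently general position,'' some power of the dual map is a homothety $\psi^k=\lambda\cdot\mathrm{id}$. Hypothesis (5) only excludes a full circle $S^1$; it does not give general position, and nothing in hypotheses (1),(2),(4),(5) prevents $\Sigma_A^c(Q)$ from consisting of exactly $s$ independent points of $S(Q)$, in which case a non-scalar diagonal map fixes every one of them. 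Moreover your valuation dichotomy on $\mathbb{Z}$ (trivial or $p$-adic) omits the valuations with $v^{-1}(\infty)=p\mathbb{Z}$, which factor through $A/pA$ rather than $A\otimes_{\mathbb{Z}}\mathbb{Q}$; and the $\Delta$'s attached to non-trivial valuations are affine segments, rays and lines (rationally defined over $\mathbb{Z}$, not cones), so their projections to $S(Q)$ can be arcs, not isolated points. The paper handles this by passing to the finite set of boundary points $\partial\Sigma_{A_0}^c(Q_0)$ of the resulting spherical polyhedron, which the induced map permutes, and then deduces only the diagonal form $f_0^k(x_i^{n})=x_i^{c_i}$ with a priori distinct $c_i$; hypothesis (3) is invoked precisely with possibly distinct $\widetilde c_i$ to force $\widetilde n=\widetilde c_1=\dots=\widetilde c_s$. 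So the scalar claim you yourself flag as ``the main obstacle'' is both unproven and unnecessary, but as your argument stands it is a hole, since you apply (3) only in the already-equalized form $c_i=N\lambda$.

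Second, the construction of $M$. Choosing $\mu$ as ``a suitable multiple of $\nu=f(m_0\bar q)$'' does not yield $f(\mu A)\subseteq\mu A$: to pull $\mu$ through the twisted equivariance $f(a_0\lambda)=f(a_0)\widetilde f_0(\lambda)$ one needs $\mu$ to be fixed by $\widetilde f_0$, and the paper's Step 4 exists exactly for this --- it builds the subring $\overline C$ (image of the ring generated by $\mathbb{Z}Q^{n^{k}}$ and the $\widetilde f_0$-fixed elements), shows $A$ is integral over $\overline C$, and chooses $\mu\in\overline C$. Similarly, $[A:mA+\mu A]<\infty$ needs $\mu\notin pA$ for every prime $p\mid m$ (prime avoidance as in Lemma \ref{prime}, using hypothesis (2) that $A/pA$ is an infinite domain), followed by the coprime-ideal/power argument with the ideals $p_iA+\mu A$; and the inclusion $f(mA)\subseteq mA$ itself requires the divisibility argument showing $m$ divides $f(m\bar t)$ in $A$ (again using that $A/pA$ is a domain and that $A$ is $\mathbb{Z}$-torsion-free). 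None of these points is addressed by ``one checks after passing to finite-index subgroups,'' and with your specific choice of $\mu$ the needed $\widetilde f_0$-invariance generally fails, so the final step would not go through as written.
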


\medskip
Proof.
As the proof is long we split it in several steps.

{\bf  Step 1.} By construction $[A : A_0] < \infty$ and for $T$ a coset of $Q_0$ in $Q$ we have that $\cap_{t \in T}  A_0 t$ is a $\mathbb{Z} Q$-submodule ( via conjugation) of finite index in $A$. Hence substituting $A_0$ with this intersection we can assume that $A_0$ is a $\mathbb{Z} Q$-submodule of $A$.

Suppose that $f(A_0) \not= 0$  otherwise b1) holds automatically.

 Note that $f(a_0 q_0) = f(a_0) f_0(q_0)$ for $a_0 \in A_0, q_0 \in Q_0$ implies that $f(A_0)$ is a $\mathbb{Z} \widetilde{Q}$-submodule of $A$,  where $\widetilde{Q} = f_0(Q_0)$.

\medskip
{\bf Claim} There is a bijection
$$ \psi :  \Sigma^c_{f(A_0)}(\widetilde{Q}) \to \Sigma_{A_0}^c(Q_0)$$
 given by $\psi([\widetilde{\chi}]) = [ \widetilde{\chi} \circ f_0]$.
 
 \medskip
 {\bf Proof of the Claim.}
 Since $A = \mathbb{Z} Q/ I$ is a domain   and $f(A_0) \not= 0$,  we have that $ann_{\mathbb{Z} \widetilde{Q}} f(A_0) = \mathbb{Z} \widetilde{Q} \cap I$. Similarly since $A_0 \not= 0$ we have that $ann_{\mathbb{Z} Q_0} A_0 = \mathbb{Z} Q_0 \cap I$. Then by \cite[(1.3)]{B-S2} 
 \begin{equation}  \label{igual1}  \Sigma^c_{f(A_0)}(\widetilde{Q}) =  \Sigma^c_{\mathbb{Z} \widetilde{Q} / (\mathbb{Z} \widetilde{Q} \cap I) }(\widetilde{Q}) \end{equation}  and
 \begin{equation} \label{igual2}  \Sigma_{A_0}^c(Q_0) =  \Sigma_{\mathbb{Z} Q_0/ (\mathbb{Z} Q_0 \cap I)}^c(Q_0) \end{equation}
 Let $\widetilde{f}_0$ be the $\mathbb{Z}$-linear map $\mathbb{Z} Q_0 \to \mathbb{Z} \widetilde{Q}$ induced by $f_0$. Thus $\widetilde{f}_0$ is an isomorphism of rings.
 
 The condition  $f(a_0 q_0) = f(a_0) f_0(q_0)$ for $a_0 \in A_0, q_0 \in Q_0$ implies that for $\lambda \in \mathbb{Z} Q_0$ we have that 
 $f(a_0 \lambda) = f(a_0) \widetilde{f}_0(\lambda)$. Thus fixing one $a_0$ such that $f(a_0) \not= 0$ we obtain that 
 $$0 = f(0) = f(a_0 ( \mathbb{Z} Q_0 \cap I))= f(a_0)  \widetilde{f}_0 ( \mathbb{Z} Q_0 \cap I)$$
 and since $A$ is a domain we conclude that $ \widetilde{f}_0 ( \mathbb{Z} Q_0 \cap I) \subseteq \mathbb{Z} \widetilde{Q} \cap I$.
 Thus we  have an isomorphism of integral domains induced by $\widetilde{f}_0$
 $$\nu : M_1 = \mathbb{Z} Q_0/ (\mathbb{Z} Q_0 \cap I) \to M_2 = \mathbb{Z} \widetilde{Q}/ \widetilde{f}_0(\mathbb{Z} Q_0 \cap I)$$
 and $M_3 = \mathbb{Z} \widetilde{Q} / (\mathbb{Z} \widetilde{Q} \cap I)$ is a quotient of $M_2$. 
 
 Finally since $Q_0$ and $\widetilde{Q}$ are subgroups of finite index in $Q$ we have that $M_1 \leq A$ and $M_3 \leq A$ are integral ring extension i.e. $A$ is finitely generated as $M_1$-module and as $M_3$-module. Then $$Krulldim(M_1) = Krulldim(A) = Krulldim(M_3) = 2.$$ Since $M_1 \simeq M_2$ we have $Krulldim(M_1) = Krulldim(M_2)$, hence 
 $$Krulldim(M_2) = Krulldim(M_3)$$ Finally since $A$ is an integral domain, both $M_1$ and $M_3$ are integral domains, hence $M_2 \simeq M_1$ is an integral domain. Sumarising $M_3$ is a quotient of $M_2$, both are integral domains of the same Krull dimension, hence $M_3 = M_2$. Thus the isomorphism of rings $\nu$ can be rewritten as
 $$\nu : M_1 = \mathbb{Z} Q_0/ (\mathbb{Z} Q_0 \cap I) \to M_3 = \mathbb{Z} \widetilde{Q}/ (\mathbb{Z} \widetilde{Q} \cap I)$$
 Thus there is a bijection
 $$\Sigma_{M_3}^c(\widetilde{Q}) \to \Sigma_{M_1}^c(Q_0)$$ that sends
  $[\widetilde{\chi}]$ to $[ \widetilde{\chi} \circ f_0]$.
 This together with (\ref{igual1}) and (\ref{igual2}) completes the proof of the Claim. 
 
\medskip
Consider the bijection
$$ \gamma  = \tau_{A,A, Q, \widetilde{Q}} \circ \tau_{A, A_0, Q, Q_0}^{-1} : \Sigma_{A_0}^c(Q_0) \to \Sigma^c_A(Q) \to \Sigma_A^c(\widetilde{Q}) = \Sigma^c_{f(A_0)}(\widetilde{Q})$$
where $\widetilde{Q} = f_0(Q_0)$ and the last equality is given by Lemma \ref{auxiliar}. 
 
Then 
 $$f_{*} = \psi \circ  \gamma : \Sigma_{A_0}^c(Q_0) \to \Sigma_{A_0}^c(Q_0)$$ is a bijective map.  
 
 Set $D= M_1$, where $M_1$ was defined in the proof of the Claim.  Then (\ref{igual2}) can be stated as
 $$\Sigma_{A_0}^c(Q_0) = \Sigma_D^c(Q_0)$$
 By  Theorem \ref{thm-valuation1} $$ \Sigma_{D}^c(Q_0) = \cup_j [\Delta^{v_{j}}_{D}(Q_0)  \setminus \{ 0 \} ]$$ where  $v_{j}$ is a real valuation of the image $R$ of $\mathbb{Z}$ in $D$. We   identify $ R$ with $ \mathbb{Z}$. By Theorem \ref{Dedekind} we can take the union to be finite.

 There are two cases to consider.
 
 First, if $v_j^{-1} (\infty) = 0$. In this case the valuation $v_j$ can be extended to a valuation $w_j$ of $\mathbb{Q}$. Then $$\Delta^{v_{j}}_{D}(Q_0) = \Delta^{w_{j}}_{D \otimes_{\mathbb{Z}} \mathbb{Q}} (Q_0)$$
 
 By Lemma \ref{minus1} we have that $$Krulldim(D \otimes_{\mathbb{Z}} \mathbb{Q}) \leq 2-1 = 1.$$ Then the transcendence degree of the field of fractions of $D$ as a field extension of $\mathbb{Q}$ is at most 1. Then by Theorem \ref{val-val1} $\Delta^{w_{j}}_{D \otimes_{\mathbb{Z}} \mathbb{Q}}(Q_0)$ is a homogeneous polyhedron of dimension at most 1 i.e. finite union of segments,rays and lines.

Second, if  $v_j^{-1} (\infty) = p \mathbb{Z}$, where $p$ is a prime number. Then $$\Delta^{v_{j}}_{D}(Q_0) = \Delta^{u_{j}}_{D/ pD} (Q_0),$$ where $u_j$ is the valuation of $\mathbb{Z}/ p \mathbb{Z}$ induced by $v_j$, i.e., the zero one. As $D$ is an integral domain we have that $$Krulldim(D/pD) \leq Krulldim(D) - 1  =1.$$ Note that $D_0 = D/ pD$ is a Noetherian ring, hence has finitely many minimal prime ideals $P_1, \ldots, P_l$. For any valuation $w: D_0 \to \mathbb{R}_{\infty}$ we have that $w^{-1} (\infty)$ is a prime ideal of $D_0$, which contains some  $P_i$.  We conclude that $w$ induces a valuation $D_i : = D_0/ P_i \to 
\mathbb{R}_{\infty}$. Thus $$ \Delta^{u_{j}}_{D/ pD} (Q_0) = \cup_{1 \leq i \leq l}  \Delta^{u_{j}}_{D_i} (Q_0).$$ Note that $Krulldim(D_i) \leq Krulldim(D_0) \leq 1$. Then transcendence degree of the field of fractions of $D_i$ as a field extension of the field with $p$ elements is at most 1. Then by Theorem \ref{val-val1} $ \Delta^{u_{j}}_{D_i} (Q_0)$ is a homogeneous polyhedron of dimension at most 1, i.e., a finite union of segments, rays and lines. In the case of dimension 0 the homogeneous polyhedron is a finite set of isolated points that corresponds to segments of length 0. 

We conclude that in all cases $\Delta^{v_{j}}_{D}(Q_0)$ is a finite union  of segments, rays and lines. Furthermore it suffices to consider normalised valuations $v_j$ i.e. we can substitute $v_j$ by $\lambda v_j$ for $ \lambda$ a positive real number, thus we can assume that  $Im (v_j) \simeq \mathbb{Z}_{\infty}$ or $Im (v_j) = \{ 0, \infty \}$ and thus by Theorem \ref{val-val1} the above segments, rays and lines are rationally defined over $\mathbb{Z}$ i.e. are rationally defined.

Now we consider a segment or a ray or a line that is contained in $\Delta^{v_{j}}_{D}(Q_0) \subseteq Hom(Q_0, \mathbb{R})\simeq \mathbb{R}^s$. This segment or ray or a line together with the origin span a  vector space of dimension  at most 2 that intersects the unit sphere $S^{s-1}$ in $\mathbb{R}^s$ in circle $S^1$ or in two antipodal points. We identify $S(Q_0)$ with $S^{s-1}$. Thus when we project  the segment or the ray or the line excluding the origin to $S(Q_0)$ we obtain an arc in $S^1$  or a point.  Thus $ \Sigma_{D}^c(Q_0)$ is a finite union of arcs  and isolated points, and the union of some arcs cannot give a whole circle $S^1$ as described above since $ \Sigma_{D}^c(Q_0)$ does not contain such circle $S^1$ by condition 5) of the statement (note that since by Corollary \ref{biject} $\tau_{A, A_0, Q, Q_0}$ is a bijection condition 5) is equivalent to  $\Sigma_{A_0}^c(Q_0)$ does not contain an one dimensional circle $S^1$, obtained by intersecting a two dimensional subspace of $Hom(Q_0, \mathbb{R}) \simeq \mathbb{R}^{s}$ with the unit sphere $S^{s-1}$). Since $ \Sigma_{D}^c(Q_0)$  is a closed subset of $S(Q_0)$ the arcs that appear in the above union are closed, actually some of the original arcs can be open from one or both sides but after the union we get a new decomposition of  $ \Sigma_{D}^c(Q_0)$  as a finite union of closed arcs and isolated points, i.e. we have a spherical polyhedron that is rationally defined since the corresponding $\Delta$'s are rationally defined.

 We define the {\bf boundary} points $\partial  \Sigma_{D}^c(Q_0)$ as the end points of the closed arcs and the isolated points. Then we conclude that $\partial  \Sigma_{D}^c(Q_0) \not= \emptyset$ and furthermore $\partial  \Sigma_{D}^c(Q_0)$ are discrete points in $S(  Q_0)$.

 Thus $\partial  \Sigma_{A_0}^c(Q_0) $ is a finite non-empty set, say  with $d$ elements.
 Note that $f_*$ permutes the elements of  $\partial  \Sigma_{A_0}^c(Q_0) $. 
 Then $$f_{*}^k |_{\partial  \Sigma_{A_0}^c(Q_0)} = id |_{\partial  \Sigma_{A_0}^c(Q_0)} \hbox{ for }k = d!.$$

Note that condition 4 together with Theorem \ref{rigidity1} applied for $\Delta_{K}^0(Q_0)$, where $K$ is the field of fractions of $D$,  imply that for every element $[\chi] \in S(Q_0)$ we have that the character $\chi$ is a $\mathbb{R}$-linear combination of characters $\chi_1, \ldots, \chi_j$ for some $$\{[\chi_1], \ldots, [\chi_j] \} \subseteq [ \Delta_{K}^0(Q_0)]  \subseteq  [ \Delta_{D}^0(Q_0)] \subseteq  \Sigma_{D}^c(Q_0) = \Sigma_{A_0}^c(Q_0).$$ Hence $\chi$ is a $\mathbb{R}$-linear combination of characters  from $ \partial  \Sigma_{A_0}^c(Q_0)$. Then since $f_{*}^k |_{\partial  \Sigma_{A_0}^c(Q_0)}$ is the identity map  we deduce that $Q$ has a basis $x_1, \ldots, x_s$ such that for some positive integers $n_1, \ldots, n_s, c_1, \ldots, c_s$  we have that $f_0^k$ is defined in $\langle x_1^{n_1}, \ldots, x_s^{n_s} \rangle$,
$$\langle x_1^{n_1}, \ldots, x_s^{n_s} \rangle \leq Q_0 \hbox{ and }f_0^k(x_i^{n_i} ) = x_i^{c_i}.$$
By substituting $n_1, \ldots, n_s$ with their least common multiple  we can assume that $n_1 = \ldots = n_s = n$.

{\bf  Step 2. }  Let $E$ be the image  of $\mathbb{Z} Q^n$ in $A$,  where $n$ was defined above. We claim that there  are  $\mu \in E$  and an integer  $m> 1$ such that $$[A : m A + \mu A] < \infty \hbox{ and }m A + \mu A \subseteq A_0.$$
Since $[A : A_0] < \infty$ we can choose  an integer $m> 1$ such that $m A \subseteq A_0$.
We decompose $m = p_1^{z_1} \ldots p_u^{z_u}$, where $2 \leq p_1 < p_2 < \ldots < p_u$ are primes.   We prove the existance of $\mu$ in the following Claim.

\medskip
{\bf Claim.} There is $\mu \in \cap_{1 \leq i \leq u} ((A_0 \cap E)  \setminus p_i A) = (A_0 \cap E) \setminus  \cup_{1 \leq i \leq u} p_i A.$

\medskip
 {\bf Proof of Claim.} Let $B = A_0 \cap E$.  We need to show that $B  \not\subseteq \cup_{1 \leq i \leq u} p_i A$. Suppose the contrary i.e. $B  \subseteq \cup_{1 \leq i \leq u} p_i A$.  Note that by the assumptions of  Theorem \ref{oldA1} $A/ p_i A$ is a domain, so each $p_i A$ is a prime ideal of $A$. Since $B$ is an additive subgroup of $A$, $BA$ is an ideal of $A$ contained in $ \cup_{1 \leq i \leq u} p_i A$. Then by Lemma \ref{prime} there is some $i_0$ such that  $B \subseteq BA \subseteq p_{i_0} A$. Since $\mathbb{Z} Q^n \subseteq \mathbb{Z} Q$ is an integral extension of rings, we have that $A$ is finitely generated as $E$-module,  i.e., there are some $a_1, \ldots, a_t \in A$ such that
 $$ A = \sum_{1 \leq i \leq t} a_i E.$$ Since $A_0$ is of finite index in $A$ we deduce that $B$ has finite index in $E$,   i.e. the additive group $E/ B$ is finite, hence finitely generated. Thus there are some elements $ e_1, \ldots, e_r \in E$ such that 
 $$E = B + \sum_{1 \leq j \leq r} \mathbb{Z} e_j.$$
 Hence
 $$ A = \sum_{1 \leq i \leq t} a_i E =  \sum_{1 \leq i \leq t} a_i (B + \sum_{1 \leq j \leq r} \mathbb{Z} e_j) =  \sum_{1 \leq i \leq t} a_iB +  \sum_{1 \leq i \leq t, 1 \leq j \leq r} \mathbb{Z} a_i e_j \subseteq p_{i_0} A + \sum_{1 \leq i \leq t, 1 \leq j \leq r} \mathbb{Z} a_i e_j$$
Hence $A/ p_{i_0} A$ is a finitely generated abelian group of exponent $p_{i_0}$, so finite, a contradiction with condition 2) from the assumptions that  $A/ p A$ is infinite for every prime $p$. This completes the proof of the Claim.

\medskip

Set $$I_i = p_i A + \mu A$$ an ideal of $A$ for each $ 1 \leq i \leq u$. Recall that $A$ is $\mathbb{Z}$-torsion free, so $p_iA \not= 0$.  Since $A/ p_iA$ is a an  integral domain and $p_i A \not=0$ we have that $Krulldim(A/ p_i A) < Krulldim(A) = 2$. Then using that $A/ p_iA$ is infinite domain  we can deduce that $$Krulldim(A/ p_i A) = 1.$$  Thus every non zero ideal of  $A/ p_i A$ has finite index, in particular
 $$[A : I_i] < \infty.$$
We can prove by induction on the positive integer $z$ that $A / I_i^z$ is always finite. Indeed this holds for $z = 1$, suppose it holds for $z$. Then, since $A$ is Noetherian, $I_i^z/ I_i^{z+1}$ is a finitely generated ideal of $A/ I_i^{z+1}$. Thus we can view $I_i^z/ I_i^{z+1}$ as a $A/ I_i^{z+1}$-module, but the action of $I_i/ I_i^{z+1}$ is trivial. Thus $I_i^z/ I_i^{z+1}$ is a finitely generated $A/ I_i$-module. Since $A/ I_i$ is finite, we conclude that $I_i^z/ I_i^{z+1}$ is finite.

Since $p_i$ and $p_j$ are coprime integers for $i \not= j$, $p_i A$ and $p_jA$ are coprime ideals i.e. their sum is $A$. Hence $ I_i$ and $I_j$ are coprime ideals in $A$ and thus $I_i^{z_i}$ and $I_j^{z_j}$ are coprime ideals of $A$. Then
$$I_1^{z_1} \ldots I_u^{z_u} = I_1^{z_1} \cap \ldots \cap I_u^{z_u} \hbox{ has finite index in }A.$$
Note that $I_i^{z_i} \subseteq p_i^{z_i} A + \mu A$, hence
$$I_1^{z_1} \ldots I_u^{z_u} \subseteq p_1^{z_1} \ldots p_u^{z_u} A + \mu A = m A + \mu A$$
This shows that  $[A : m A + \mu A] < \infty$.

{\bf Step 3.}   Let $Q_1$ be a subgroup of finite index in $Q_0$ and $T$ be a transversal of $Q_1$ in $Q$.  
 Denote by overlining the image via the canonical projection $\mathbb{Z} Q \to \mathbb{Z} Q/ I = A$ i.e. for $t \in T$, $\overline{t}$ is the image of $t$.  
 Recall  that  $\widetilde{f}_0$ is the isomorphism of rings $\mathbb{Z} Q_0 \to \mathbb{Z} \widetilde{Q} = \mathbb{Z} f_0(Q_0)$ induced by $f_0$ that was defined in step 1. Since $A$ is a domain, $f(aq) = f(a) f_0(q)$ for $a \in A_0, q \in Q_0$ and $f(A_0) \not= 0$ we conclude that 
 $\widetilde{f}_0$ induces a map $\widehat{f}_0: \overline{\mathbb{Z} Q_0} \to \overline{\mathbb{Z} \widetilde{Q}}$.

Consider $t \in T$  and recall that the positive integer $m$ was defined in  step 2  under the only condition that $m A \subseteq A_0$. Then $$f |_{m \overline{t {\mathbb{Z} Q_1}}} : m \overline{t \mathbb{Z} Q_1}  \to  f(m \overline{t}) \overline{\mathbb{Z} Q_2}$$ sends $m \overline{t q_1}$ to $f(m \overline{t}) \overline{f_0(q_1)}$, where $Q_2 = f_0(Q_1)$.

If $f(m \overline{t}) = 0$ for every $ t \in T$ then $f(m \overline{t \mathbb{Z} Q_1}) = 0$  for every $ t \in T$ , hence $$f(m A) = 0.$$ 
 Then $$m f( \overline{t} \mu) = f( m \overline{t} \mu) = f(m \overline{t}) \widehat{f}_0( \mu) =0,$$ hence using that $A$ is  commutative with 0 characteristic $$f( \mu \overline{t}) = f( \overline{t} \mu) = 0$$   and so
$$f(\mu A) = f(\mu 
 \overline{\mathbb{Z} Q}  ) =  f( \sum_{t \in T}  \mu   \overline{{t} \mathbb{Z} Q_1}) = \sum_{t \in T} f(\mu \overline{t})  \widehat{f}_0 (\overline{ \mathbb{Z} Q_1}) = 0,$$
i.e. b1) holds. Note that in this case it was sufficient to assume that $f(mA) = 0$ to deduce b1).

Let $Q_2 =   f_0(Q_1) $.
If $f(m \overline{t}) \not= 0$ for some $t \in T$ 
then using that $A$ is a domain  there is an isomorphism $f(m \overline{t}) {\mathbb{Z} Q_2} \simeq {\mathbb{Z} Q_2} / ann_{\mathbb{Z} Q_2}f(m \overline{t}) = {\mathbb{Z} Q_2} / (\mathbb{Z} Q_2 \cap I)$. Similarly  $m \overline{t} \mathbb{Z} Q_1 \simeq {\mathbb{Z} Q_1} / ann_{\mathbb{Z} Q_1} (m \overline{t}) = \mathbb{Z} Q_1 / (I \cap \mathbb{Z} Q_1)$.
Thus the map $$f |_{m \overline{t \mathbb{Z} Q_1}} : m \overline{t \mathbb{Z} Q_1} \to f(m \overline{t}) \overline{\mathbb{Z} Q_2}$$ induces an isomorphism of rings
$$\rho_{Q_1}:  \mathbb{Z} Q_1 / (I \cap \mathbb{Z} Q_1) \to {\mathbb{Z} Q_2} / (I \cap \mathbb{Z} Q_2)$$
and since for the original map $f$ we have that $f(m \overline{t q_1})  = f(m \overline{t}) \overline{f_0(q_1)}$ we have that $$\rho_{Q_1} \hbox{ is induced by }f_0 |_{Q_1}.$$ Note that to define $\rho_{Q_1}$ we needed only that $Q_1$ is a subgroup of finite index in $Q_0$ and that $f(m A) \not= 0$.

Assume from now on that b1) does not hold.
Let $n_0 $ be a positive integer divisable by $n$ such that $f_0(Q^{n_0}) \subseteq Q^n$, where $n$ is defined at the last line of step 1. Recall that $f_0^k(x_i^n) = x_i^{c_i}$ where both $n$ and $c_i$ are positive and $f_0^k$ is well-defined on $Q^n$.
Then consider the groups
$$Q_1 = Q^{n_0^{k+1}}, Q_2 = f_0(Q_1), \ldots, Q_{k+1} = f_0(Q_k)$$
and the corresponding isomorphisms 
$\rho_{Q_1}, \ldots, \rho_{Q_k}$. These isomorphisms are well defined since $f(m A) \not= 0$ and $Q_1, \ldots , Q_{k+1}$ are subgroups of finite index in $Q_0$.
Then the composition
$$\theta : = \rho_{Q_k} \rho_{Q_{k-1}} \ldots \rho_{Q_1} : \mathbb{Z} Q_1 / (I \cap \mathbb{Z} Q_1) \to {\mathbb{Z} Q_{k+1}} / (I \cap \mathbb{Z} Q_{k+1})$$
is an isomorphism induced by $f_0^k |_{Q_1} : Q_1 \to Q_{k+1}$. Then by condition 3 from the assumptions applied for  $\widetilde{n} = n_0^{k+1}$ and $\widetilde{c}_i = n_0^{k+1} c_i/ n$   we have that $ \widetilde{n} = \widetilde{c}_1 = \ldots = \widetilde{c}_s$. Thus $n = c_1 = \ldots = c_s$ and $$f_0^k |_{Q^n} \hbox{ is the inclusion of }Q^n \hbox{ in }Q \hbox{ i.e. }  f_0^k (q) = q \hbox{ for }q \in Q^n.$$

{\bf  Step 4. }  Here we impose extra condition on $\mu$.

Recall that the isomorphism $f_0 : Q_0 \to f_0(Q_0) = \widetilde{Q}$ extends to a ring isomorphism  $\widetilde{f}_0 : \mathbb{Z} Q_0 \to \mathbb{Z} \widetilde{Q}$.

Let $q \in Q^{n^{k}}$. 
Since $f_0^k = id$ we have that $$\{ f_0^i (q) \ | \ i \geq 0 \} = \{ q = q_1, \ldots, q_j \} \hbox{ is a finite set   with }j \leq k.$$  Consider the polynomial $f_q = (X- q_1) \ldots (X- q_j)$, it  has coefficients in the ring $C =(\mathbb{Z} Q^n)^{\widetilde{f}_0}$ of fixed points under  $ \widetilde{f}_0$. Let $R$ be the subring of $\mathbb{Z} Q$ generated by $\mathbb{Z} Q^{n^{k}}$ and $(\mathbb{Z} Q^{n})^{\widetilde{f}_0}$. By the above every element of $R$ is integral over   $(\mathbb{Z} Q^{n})^{\widetilde{f}_0}$. Note that $\mathbb{Z} Q$ is an integral extension of $ \mathbb{Z} Q^{n^{k}}$, hence every element of  $\mathbb{Z} Q$ is integral over $C$. Since $Q$ is finitely generated this implies that $\mathbb{Z} Q \hbox{ is a finitely generated }C\hbox{-module.}$  Hence $A$ is finitely generated as $\overline{C}$-module, where $\overline{C}$ is the image of $C$ in $A$. Note that for $E$ from step 2 we have that $\overline{C} \subseteq E$.

 We claim that $\mu$ can be chosen from $\overline{C}$. For this we need a modified version of the Claim of step 2, namely that $$ \cap_{1 \leq i \leq k} ((A_0 \cap  \overline{C})  \setminus p_i A) = (A_0 \cap \overline{C}) \setminus  \cup_{1 \leq i \leq k} p_i A  \  \not= \emptyset. $$ 
 For this we can repeat the argument from the beggining of step 2 by substituting $E$ with $\overline{C}$ and use that $A$ is finitely generated as $\overline{C}$-module.

{\bf Step 5. }  
Let $T$ be a transversal of $Q^n$ in $Q$.
 Then $$m A + \mu A = \sum_{t \in T} m \overline{t \mathbb{Z} Q^n} +  \sum_{t \in T} \mu \overline{t \mathbb{Z} Q^n}.$$ Then since $\mu \in \overline{C}$ we have
$$m f( \overline{t} \mu) = f( m \overline{t} \mu)  = f(m \overline{t}) \mu,$$
where the last equality follows from the fact that $\mu \in \overline{C}$. Then $m $ divides $ f(m \overline{t}) \mu$ in $A$.

 Suppose that for some $t = t_1$, 
 $m$ does not divide $f(m \overline{t}_1) $ in $A$. Then $m = m_1 m_2$, where $m_1, m_2$ are positive integers, $m_1$ divides $f(m \overline{t}_1)$ in $A$ and $m_1$ is maximal with this property. Then $m_2 > 1$ and 
let $p$ be a prime integral divisor of $m_2$. Note that $p$ depends on $m$ and $\overline{t}_1$ but not on $\mu$.
 Note that $$m f( \overline{t}_1 \mu) =  f( m \overline{t}_1 \mu) = f(m \overline{t}_1) \mu.$$
 Then  using that $A$ has zero characteristic we obtain
$$ \frac{f(m \overline{t}_1)}{m_1} \mu =m_2 f( \overline{t}_1 \mu) \in p A .$$ By the maximality of $m_1$ we have that $\frac{f(m \overline{t})}{m_1} \notin pA$  and since $A/ pA$ is a domain we conclude that $\mu \in pA$. Then $$M = m A + \mu A \subseteq pA + \mu A \subseteq pA,$$ gives a contradiction since $M$ has finite index in $A$ and $pA$ has infinite index in $A$.

 The contradiction we reached implies
that $m$ divides $ f(m \overline{t})$ in $A$ for every $t \in T$  and so there is $a_t \in A$ such that $m a_t = f(m \overline{t})$. Hence
$m f(  \overline{t} \mu) =  f(m \overline{t}) \mu = m a_t \mu$ and since $A$ is $\mathbb{Z}$-torsion-free we conclude that  $f( \overline{t} \mu) = a_t \mu$. Then 

$$f(m A) =  f(  \sum_{t \in T} m \overline{t \mathbb{Z} Q^n}) =  \sum_{t \in T}  f(m \overline{t}) \overline{\mathbb{Z} f_0(Q^n)}  {\subseteq}  \sum_{t \in T} m a_t  \overline{\mathbb{Z} Q} \subseteq m A
$$
and 
$$
f(\mu A) = f(\sum_{t \in T} \mu  \overline{t \mathbb{Z} Q^n}) = \sum_{t \in T}f(\mu  \overline{t}) \overline{ \mathbb{Z} f_0(Q^n)} {\subseteq} \sum_{t \in T} \mu a_t \overline{\mathbb{Z} Q} \subseteq \mu A.$$

Hence
$$f(M) = f(mA + \mu A) \subseteq mA + \mu A = M.$$
Thus condition b2) holds.

In both cases, b1) and b2), $f(mA) \subseteq m A$. 
 Note that  if $m_0$ is an integer divisable by $m$ then $f(m_0A) = f( \frac{m_0}{m} m A) = \frac{m_0}{m} f(m A) \subseteq \frac{m_0}{m} (m A) = m_0 A$.

\section{Homothety rigid rings} \label{HRR}

{\bf Definition} Let $Q$ be a finitely generated free abelian group and $A = \mathbb{Z} Q/ I$
 a domain of zero characteristic. We say that $A$ is $Q$-homothety rigid if condition 3) from  Theorem \ref{oldA1} holds. For simplicity we write $n$ for $\widetilde{n}$ and $c_i$ for $\widetilde{c}_i$.

\begin{theorem} \label{rigid} Let $Q \simeq \mathbb{Z}^s$ be a finitely generated free abelian group, $s \geq 2$ and $A = \mathbb{Z} Q/ I$
 a domain of zero characteristic. Suppose that $A$ has Krull dimension 2 and that 
 no element of $\overline{Q} \setminus \{ 1 \} \subseteq A$ is algebraic over $\mathbb{Q}$, where $\overline{Q}$ denotes the image of $Q$ in $A$. Then $A$ is $Q$-homothety rigid. 
\end{theorem}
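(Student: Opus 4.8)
The strategy is to turn the hypothetical ring homomorphism of condition~3) into a finite self-correspondence of an algebraic curve and then kill it by genus/ramification considerations, with the Puiseux–Newton parametrization of the branches doing the local bookkeeping.

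\textbf{Step 1 (reduction to an isomorphism).} Suppose we are given positive integers $n,c_1,\dots,c_s$ and a ring homomorphism $\phi$ as in condition~3). Its source $A_n$ and target $A_{(c)}$ are, canonically, the subrings of $A$ generated by $\bar x_1^{\pm n},\dots,\bar x_s^{\pm n}$ and by $\bar x_1^{\pm c_1},\dots,\bar x_s^{\pm c_s}$, and $\phi(\bar x_i^{\,n})=\bar x_i^{\,c_i}$, so $\phi$ is onto $A_{(c)}$. Since $\mathbb ZQ$ is a finitely generated module over each of $\mathbb Z\langle x_i^n\rangle$ and $\mathbb Z\langle x_i^{c_i}\rangle$, the rings $A_n$ and $A_{(c)}$ are finite over $A$, hence of Krull dimension $2$ by Theorem~\ref{int}; as $A_n$ is a domain, a surjection of $A_n$ onto a ring admitting a chain of primes of length $2$ must be injective (otherwise one gets a chain of length $3$ in $A_n$). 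Thus $\phi\colon A_n\xrightarrow{\ \sim\ }A_{(c)}$, and it extends to an isomorphism $\phi\colon K_n\xrightarrow{\ \sim\ }K_{(c)}$ of the fraction fields $K_n=\mathbb Q(\bar x_1^{\,n},\dots,\bar x_s^{\,n})$ and $K_{(c)}=\mathbb Q(\bar x_1^{\,c_1},\dots,\bar x_s^{\,c_i})$, both subfields of $K:=\operatorname{Frac}(A)$ of finite index.

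\textbf{Step 2 (the $c_i$ are equal).} Let $C,C_n,C_{(c)}$ be the smooth projective models of $K,K_n,K_{(c)}$ over the constant field $F=\overline{\mathbb Q}\cap K$. The inclusions $K_n\subseteq K$, $K_{(c)}\subseteq K$ give finite morphisms $\pi_n\colon C\to C_n$, $\pi_c\colon C\to C_{(c)}$ of degrees $[K:K_n]$, $[K:K_{(c)}]$; composing $\pi_c$ with the isomorphism $C_{(c)}\cong C_n$ induced by $\phi^{-1}$ gives a second morphism $\pi'\colon C\to C_n$ of degree $[K:K_{(c)}]$ with $(\pi')^{*}\bar x_i^{\,n}=\phi(\bar x_i^{\,n})=\bar x_i^{\,c_i}$. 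Computing the degree of $\bar x_i^{\,c_i}$ as a function on $C$ directly and via $\pi'$ gives $c_i=n\,[K:K_{(c)}]/[K:K_n]$, which is independent of $i$; since the $\bar x_i$ are transcendental this forces $c_1=\dots=c_s=:c$, and $\lambda:=c/n$ is the ratio of these covering degrees. (The same conclusion also follows via $\Sigma$-theory: using Corollary~\ref{biject}, Lemma~\ref{auxiliar} and functoriality of $\Sigma^{c}$ under $\phi$, the diagonal map $\operatorname{diag}(c_1/n,\dots,c_s/n)$ of $\operatorname{Hom}(Q,\mathbb R)$ must preserve $\Sigma^{c}_A(Q)$, which by Lemma~\ref{minus1}, Theorem~\ref{val-val1} and Theorem~\ref{rigidity1} is a finite union of rationally defined great-circle arcs and points spanning $\mathbb R^{s}$; but in degenerate situations such as $\operatorname{Newt}$ being axis-parallel this alone is insufficient, so the degree computation is the cleaner route.)

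\textbf{Step 3 (the self-map and conclusion).} Now $\phi\colon K_n\cong K_c$ with $\bar x_i^{\,n}\mapsto\bar x_i^{\,c}$; extending $\phi$ to an embedding of $K$ into an algebraic closure, the image of $\bar x_i$ is a root-of-unity multiple of an $n$-th root of $\bar x_i^{\,c}$, so $\phi$ together with the coverings coming from $K_n,K_c\subseteq K$ assembles, after passing to a suitable finite extension, into a finite self-morphism $\theta$ of the smooth projective model of $K$, of degree $\lambda$, satisfying $\theta^{*}(\operatorname{div}\bar x_i)=\lambda\,\operatorname{div}\bar x_i$ for each $i$; the Puiseux–Newton parametrization of the branches of the plane curve cut out by $I$ is precisely the classical tool that makes this construction and its ramification computation explicit. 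One then argues by the genus $g$ of $C$: if $g\ge 2$ a surjective self-map is an isomorphism, so $\lambda=1$; if $g=1$, $\theta$ is unramified and $\theta^{*}(\operatorname{div}\bar x_i)=\lambda\,\operatorname{div}\bar x_i$ forces $\operatorname{div}\bar x_i=0$, i.e.\ $\bar x_i$ constant, contradicting transcendence; if $g=0$, Riemann–Hurwitz bounds the totally ramified points of $\theta$ by two, and the divisor relation forces each $\operatorname{div}\bar x_i$ to be supported there, so every $\bar x_i$ is a monomial $u_it^{a_i}$ in a rational parameter $t$ and a nontrivial product such as $\bar x_1^{a_2}\bar x_2^{-a_1}$ lies in $\overline{\mathbb Q}^{\times}$, impossible by hypothesis (here $s\ge2$ is used). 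Hence $\lambda=1$, i.e.\ $n=c_1=\dots=c_s$, so $A$ is $Q$-homothety rigid.

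\textbf{Main obstacle.} The genus-$0$ case is the crux: there the $\Sigma$-theoretic rigidity degenerates and Riemann–Hurwitz alone yields nothing, so one must combine the eigen-divisor relation $\theta^{*}D=\lambda D$ with the classification of degree-$\lambda$ self-maps of $\mathbb P^{1}$ and then exploit the hypothesis that no nontrivial element of $\bar Q$ is algebraic over $\mathbb Q$. The other delicate point is making the passage from the abstract isomorphism $\phi$ to the self-morphism $\theta$ fully rigorous — keeping track of $F$, the roots of unity, and the fractional powers through the Puiseux–Newton expansions — but this is of the same nature as Step~1 of the proof of Theorem~\ref{oldA1} and the commutative-algebra preliminaries of Section~\ref{Sigma}.
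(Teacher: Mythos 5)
Your Step 1 is exactly the paper's opening move in Proposition \ref{s2} (surjectivity plus a Krull-dimension count force $\varphi$ to be an isomorphism), and your Step 2, while different in style from the paper (which only extracts $c_1=c_2$ later, inside the Puiseux-series case analysis after reducing to $s=2$), is a reasonable alternative route to $c_1=\dots=c_s$, modulo some bookkeeping with the constant fields. The problem is Step 3, which is where the actual conclusion $n=c_i$ has to come from, and there the construction you rely on does not exist. Extending $\phi$ to $K$ sends $\bar x_i$ to a root-of-unity multiple of an $n$-th root of $\bar x_i^{\,c}$; unless $n\mid c$ this element does not lie in $K$, nor in any fixed finite extension of $K$ stable under the resulting embedding (each application of the map introduces new $n$-th roots, so the tower never stabilizes). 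Hence there is no finite self-morphism $\theta$ of the smooth projective model of $K$ with $\theta^{*}\operatorname{div}\bar x_i=\lambda\operatorname{div}\bar x_i$; what one genuinely has is only a correspondence, namely the two finite maps $C\to C_n$ coming from the inclusion $K_n\subseteq K$ and from $\phi$ followed by $K_{(c)}\subseteq K$. Your genus trichotomy collapses at this point: the statements you invoke (``a surjective self-map of a genus $\ge 2$ curve is an isomorphism,'' ``in genus $1$ the map is unramified'') are facts about self-morphisms, not about correspondences, and curves of genus $\ge 2$ carry plenty of nontrivial finite self-correspondences, so no contradiction is obtained. Note also that your $\lambda=c/n$ is a priori only a positive rational, so even calling it the degree of a morphism presupposes what has to be proved.

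The paper circumvents precisely this obstruction: instead of trying to realize the map on a projective curve, it embeds $ff(S)$ into $\mathbb{C}((t))$ via a Puiseux--Newton parametrization $x\mapsto t^{d}$, $y\mapsto g(t)$ of the plane curve cut out by $F$, extends $\psi$ to the field $L$ generated by all fractional powers $t^{1/m},g^{1/m}$ and roots of unity inside the Puiseux field $\bigcup_m\mathbb{C}((t^{1/m}))$ --- which \emph{is} stable under the induced embedding --- and then iterates $\widetilde\psi$, comparing exponents in the resulting identities $F\bigl(t^{d},\,g(t^{(n/c_1)^{j}})^{(c_2/n)^{j}}v_j\bigr)=0$ to force $n=c_1=c_2$ (with the non-algebraicity hypothesis used only in the monomial case $g=\lambda t^{z}$). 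If you want to salvage your outline, you would have to either prove $n\mid c_i$ first (you have no tool for this) or replace the self-morphism by a self-correspondence argument strong enough to pin down $\lambda=1$; as it stands, the eigen-divisor/genus argument is the missing core of the proof rather than a delicate finishing touch, and repairing it essentially leads back to the iteration-in-Puiseux-series argument of Proposition \ref{s2}.
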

\begin{proof}  
1) We reduce first to the case when $Q$ has rank 2. 

Indeed if  there are $n,c_1, \ldots, c_s \in \mathbb{Z}_{>0}$ such that for $Q = \langle x_1, \ldots, x_s \rangle$, $Q_1 = \langle x_1^{c_1}, \ldots, {x_s^{c_s}} \rangle$ there is a well defined ring homomorphism  $$\theta: \mathbb{Z} Q^n/ (I \cap \mathbb{Z} Q^n) \to \mathbb{Z} Q_1/ (I \cap \mathbb{Z} Q_1)$$ that sends $x_i^n$ to $x_i^{c_i}$ for every $1 \leq i \leq s$, then for a subgroup $Q_2 = \langle x_i, x_j \rangle$ of $Q$, for $ 1 \leq i < j \leq s$, we can define $A_2 = \mathbb{Z} Q_2/ (I \cap \mathbb{Z} Q_2)$ and consider the restriction of $\theta$ that gives a ring homomorphism  $$\mathbb{Z} Q_2^n/ (I \cap \mathbb{Z} Q_2^n) \to \mathbb{Z} Q_3/ (I \cap \mathbb{Z} Q_3)$$ where $Q_3 = \langle x_i^{c_i}, x_j^{c_j} \rangle$.

Since $A$ {is} $\mathbb{Z}$-torsion-free and no non-trivial element of  $\overline{Q}_2$  is algebraic over $\mathbb{Q}$, we conclude that $A_2$ has Krull dimension  at least 2. 
Indeed if $Krulldim(A_2) \leq 1$ then by Lemma \ref{minus1} $Krulldim (A_2 \otimes_{\mathbb{Z}} \mathbb{Q}) \leq Krulldim(A_2)  - 1$, hence $Krulldim (A_2 \otimes_{\mathbb{Z}} \mathbb{Q}) = 0$ and by the remark after Theorem \ref{Noether} the transcendence degree  $trdeg_{\mathbb{Q}} K_2 = 0 $ where $K_2$ is the field of fractions of  $A_2 \otimes_{\mathbb{Z}} \mathbb{Q} $ i.e. $K_2$ and hence  $A_2 \otimes_{\mathbb{Z}} \mathbb{Q}$ are finite dimensional over $\mathbb{Q}$, a contradiction with the assumption that no non-trivial element of $\overline{Q}$ is algebraic over $\mathbb{Q}$.

We aim to show that $Krulldim(A_2) = 2$. Suppose that $Krulldim(A_2) > 2$.
Since $A_2$ is an integral domain quotient of $\mathbb{Z} Q_2$ and $Krulldim(\mathbb{Z} Q_2) = 3$, we get $A_2 = \mathbb{Z} Q_2$, hence $2 = trdeg_{\mathbb{Q}} K_2$. Thus for $K$ the field of fractions of $A$ we have that $K_2 \subseteq K$, so 
$$2 = trdeg_{\mathbb{Q}} K_2 \leq   trdeg_{\mathbb{Q}} K = Krulldim (A \otimes_{\mathbb{Z}} \mathbb{Q}) < Krulldim(A) = 2$$
a contradiction. Hence $Krulldim(A_2) = 2$, as claimed.

2) For the case $s = 2$ we apply Proposition \ref{s2}.

\end{proof}

\begin{proposition} \label{s2}
Let $Q = \langle x,y \rangle \simeq \mathbb{Z}^2$ and $R = \mathbb{Z} Q/ J$ {be} a domain of zero characteristic  of  Krull dimension 2 such that the image  of each non-trivial element of $Q$ in $R$ is not algebraic over $\mathbb{Q}$. Let ${n}, c_1, c_2$ be positive integers, $Q_1$ be the subgroup of $Q$ generated by {$x^{n}, y^{n}$}, $Q_2$ the subgroup of $Q$ generated by $x^{c_1}, y^{c_2}$, $R_i$ the subring of $R$ generated by $Q_i$ for $i = 1,2$. Suppose there is a ring homomorphism 
$$\varphi : R_1 \to R_2$$ that sends {$x^{n}$ to $x^{c_1}$ and $y^{n}$ to $y^{c_2}$}. Then  $n = c_1 = c_2$.
\end{proposition}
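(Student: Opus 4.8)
The plan is to push everything down to characteristic zero, recognise $R\otimes_{\mathbb Z}\mathbb Q$ as the coordinate ring of a plane curve, and then read off the constraints on $n,c_1,c_2$ from the geometry of that curve; the genuinely geometric input (Newton--Puiseux) is only needed for the last, hardest, step.

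\textbf{Step 1 (reduction to a curve).} First I would put $R'=R\otimes_{\mathbb Z}\mathbb Q$. By Lemma~\ref{minus1}, $Krulldim(R')\le Krulldim(R)-1=1$, while the image of $x$ in $R'$ is transcendental over $\mathbb Q$ by hypothesis, so $R'$ is an infinite $\mathbb Q$-algebra and $Krulldim(R')=1$; by the remark after Theorem~\ref{Noether}, the field of fractions $K$ of $R'$ has transcendence degree $1$ over $\mathbb Q$. Writing $R'=\mathbb Q[X^{\pm1},Y^{\pm1}]/\mathfrak p$, the prime $\mathfrak p$ has height $1$ in the Noetherian UFD $\mathbb Q[X^{\pm1},Y^{\pm1}]$, hence $\mathfrak p=(F)$ for an irreducible Laurent polynomial $F$; thus $F(x,y)=0$ is the minimal relation of $x$ and $y$ over $\mathbb Q$. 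Since $x,y$ are transcendental we get $\deg_XF,\deg_YF\ge 1$, and (Gauss' lemma) $[K:\mathbb Q(x)]=\deg_YF$, $[K:\mathbb Q(y)]=\deg_XF$.

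\textbf{Step 2 (a single minimal relation for both tuples).} Next I would run $\varphi$ through the same machine. Note $\varphi$ is onto $R_2$ (it hits the generators of $R_2$), so $\varphi\otimes\mathbb Q$ fits into surjections $\mathbb Q[U^{\pm1},V^{\pm1}]\twoheadrightarrow R_1\otimes\mathbb Q\xrightarrow{\,\varphi\otimes\mathbb Q\,}R_2\otimes\mathbb Q$, with $U,V\mapsto x^n,y^n$ on the left and $U,V\mapsto x^{c_1},y^{c_2}$ under the composite. By Theorem~\ref{int} each $R_i$ is again a domain of Krull dimension $2$ (since $R$ is integral over it), so by Lemma~\ref{minus1} and transcendence of $x^n$ each $R_i\otimes\mathbb Q$ has Krull dimension $1$, and the two kernels above are height-$1$ primes $(G_1)$ (the minimal relation of $(x^n,y^n)$) and $(G_2)$ (that of $(x^{c_1},y^{c_2})$). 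As $(G_1)\subseteq(G_2)$ and both are height-$1$ primes in a UFD, $(G_1)=(G_2)=:(G)$: one irreducible $G$ is simultaneously the minimal relation of $(x^n,y^n)$ and of $(x^{c_1},y^{c_2})$.

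\textbf{Step 3 (forcing $c_1=c_2$).} With $G$ fixed I would count field degrees. From the towers $\mathbb Q(x^n)\subseteq\mathbb Q(x^n,y^n)\subseteq K$ and $\mathbb Q(y^n)\subseteq\mathbb Q(x^n,y^n)\subseteq K$, using $[\mathbb Q(x):\mathbb Q(x^n)]=[\mathbb Q(y):\mathbb Q(y^n)]=n$, $[\mathbb Q(x^n,y^n):\mathbb Q(x^n)]=\deg_VG$ and $[\mathbb Q(x^n,y^n):\mathbb Q(y^n)]=\deg_UG$, one obtains
\[
\deg_VG=\frac{n\,\deg_YF}{[K:\mathbb Q(x^n,y^n)]},\qquad \deg_UG=\frac{n\,\deg_XF}{[K:\mathbb Q(x^n,y^n)]},
\]
and the identical computation with $(x^{c_1},y^{c_2})$ and the same $G$ gives
\[
\deg_VG=\frac{c_1\,\deg_YF}{[K:\mathbb Q(x^{c_1},y^{c_2})]},\qquad \deg_UG=\frac{c_2\,\deg_XF}{[K:\mathbb Q(x^{c_1},y^{c_2})]}.
\]
Since $\deg_XF,\deg_YF\ge 1$, equating the two formulas for $\deg_VG$ and the two for $\deg_UG$ yields $c_1=c_2$; write $c$ for the common value.

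\textbf{Step 4 (forcing $n=c$; the main obstacle).} It remains to prove $[K:\mathbb Q(x^n,y^n)]=[K:\mathbb Q(x^c,y^c)]$, equivalently $n=c$. The difficulty is that everything above is symmetric in $n$ and $c$ (the sole available identity, $n/[K:\mathbb Q(x^n,y^n)]=c/[K:\mathbb Q(x^c,y^c)]$, is unchanged under swapping the two base changes), so a finer invariant is required. Geometrically, Steps~2--3 say that the \emph{fixed} curve $C_G=V(G)$ is the image of $C_F=V(F)$ both under $(x,y)\mapsto(x^n,y^n)$ and under $(x,y)\mapsto(x^c,y^c)$. Here I would invoke the Newton--Puiseux theorem: the places $v$ of $K/\mathbb Q$ with $(v(x),v(y))\ne 0$ are organised by the edges of the Newton polygon of $F$ --- which, by Theorem~\ref{rigidity1} together with the hypothesis that no non-trivial element of $Q$ is algebraic over $\mathbb Q$ (equivalently, $\Delta^0_K(Q)$ spans $Hom(Q,\mathbb R)$, so the polygon is genuinely two-dimensional) --- and along each edge the branches of $C_F$ have Puiseux expansions carrying an intrinsic ``edge polynomial'' and ramification datum. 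Each base change carries such a branch to a branch of the \emph{same} curve $C_G$ in the same direction, but multiplies the relevant exponents by $n$ in the first case and by $c$ in the second; matching the two resulting descriptions of the branch data of $C_G$ should force the two scalings to coincide, giving $n=c$ (it may be convenient to first reduce to $\gcd(n,c)=1$ by factoring the gcd out of the base change, and then derive a contradiction unless $n=c=1$). Carrying out this comparison of Puiseux expansions, and in particular pinning down precisely how the ramification transforms under $(x,y)\mapsto(x^m,y^m)$, is the step I expect to be the real work.
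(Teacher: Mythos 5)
Your Steps 1--3 are correct, and Step 3 is a genuinely different (and more elementary) route to part of the conclusion: the paper also reduces to an irreducible relation $F$ over $\mathbb{Q}$ and shows $\varphi$ is an isomorphism by the same Krull-dimension argument, but it extracts $c_1=c_2$ only inside its Puiseux-series case analysis, whereas your observation that one irreducible $G$ is simultaneously the minimal relation of $(x^n,y^n)$ and of $(x^{c_1},y^{c_2})$, combined with the field-degree count through $\mathbb{Q}(x^n)\subseteq\mathbb{Q}(x^n,y^n)\subseteq K$, gives $c_1=c_2$ directly.

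Step 4, however, is a genuine gap, and the sketch as written does not close it. The concrete difficulty is this: the branch of $C_F$ parametrized by $(t^d,g(t))$ pushes forward to \emph{some} branch of $C_G$ under $(x,y)\mapsto(x^n,y^n)$ and to \emph{some} branch of $C_G$ under $(x,y)\mapsto(x^c,y^c)$, but nothing in your argument guarantees these are the same (or even Galois-conjugate) branches of $C_G$; distinct branches of one curve can carry different Puiseux exponents and edge data, so ``matching the two descriptions of the branch data of $C_G$'' does not by itself force the two scalings to coincide. The paper supplies exactly the missing mechanism: using the Puiseux embedding $x\mapsto t^d$, $y\mapsto g(t)$ it extends $\varphi$ to a field homomorphism $\widetilde{\psi}$ of a field $L=\mathbb{Q}(\{\epsilon_m, t^{1/m}, g^{1/m}\mid m>0\})$ that is stable under $\widetilde{\psi}$, iterates it, and notes that all the elements $g(t^{(n/c_1)^j})^{(c_2/n)^j}v_j$ are roots of the fixed polynomial $F(t^d,Y)$, so by finiteness of roots two of them (with $j_1\neq j_2$) are literally equal; comparing the lowest two exponents in that single power-series identity is what yields $n=c_1$ (and, by symmetry, $n=c_2$). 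Moreover the degenerate case $g=\lambda t^z$ needs separate treatment: there the exponent comparison gives only $c_1=c_2$, and one must use the hypothesis that no non-trivial element of $Q$ is algebraic over $\mathbb{Q}$ to exclude $\lambda$, hence $x^{-z}y^{d}$, being algebraic. Your appeal to Theorem \ref{rigidity1} (two-dimensionality of the Newton polygon) points in the right direction for that degenerate case, but without an iteration/pigeonhole argument, or an equally precise substitute that identifies which branch maps to which, the conclusion $n=c$ does not follow from what you have assembled.
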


\begin{proof}
Both $R_1$ and $R_2$ are domains. Since $R$ is finitely generated over $R_i$ then $R$ and $R_i$ have the same Krull dimension 2  for $i = 1,2$. By construction $\varphi$ is surjective. Then $R_1/ Ker ( \varphi) \simeq R_2$ is a domain of  Krull dimension 2. If $Ker(\varphi) \not= 0$ then using that $0$ is a prime ideal of $R_1$ we conclude that the Krull dimension of $R_1$ is bigger than the Krull dimension of $R_1/ Ker ( \varphi)$ , a contradiction. Thus $\varphi$ is a ring isomorphism.

Let $S_i = R_i \otimes_{\mathbb{Z}} \mathbb{Q} = R_i {(\mathbb{Z} \setminus 0 )^{-1}}$ a domain  for $i = 1,2$, $S =  R \otimes_{\mathbb{Z}} \mathbb{Q} = R {(\mathbb{Z} \setminus 0 )^{-1}}$ that are all $ \mathbb{Q}$-algebras of Krull dimension 1 (if the Krull  dimension of $S$ is 2 then $x,y$ are  algebraically independent elements over $\mathbb{Q}$, hence over $\mathbb{Z}$, hence $R \simeq \mathbb{Z} [X,Y]$  is a polynomial ring on two variables, hence it is of Krull dimension 3, a contradiction).

It is easy to see that 
there is $F \in \mathbb{Q}[X,Y] \subseteq \mathbb{C} [X,Y]$ such that $F(x,y) = 0$ in $S$.
 Indeed, we can consider the field of fractions of $S$ and consider it as a finite extension of $\mathbb{Q}(x)$. Then take $f$ the minimal polynomial of $y \in S$ over $\mathbb{Q} (x)$. Then by multiplying $f$ with some non-zero polynomial from $\mathbb{Q}[X]$ we can get $F$. This should be done in such a way that $F$ does not have a factor in $\mathbb{Q}[X]$.
Note that $F \in \mathbb{Q}[X][Y]$ can be viewed as a polynomial with variable $Y$ that  is irreducible over $\mathbb{Q}[X]$.

 By Puiseux-Newton Theorem there is a positive integer $d$ and a power series
 $$ g = g(t)  \in \mathbb{C}[[t]]$$ such that
 $$F(t^d, g(t) ) = 0.$$
 Note that since $F(0,0)$ is not {necessarily} 0, the constant part of $g$ is not {necessarily} 0. 
 This gives  an {embedding} $$\tau : ff(S) \to \mathbb{C}((t))$$ that sends $x$ to $t^d$ and $y$ to $g(t)$, where $ff(S)$ denotes the field of fractions of $S$ and $ \mathbb{C}((t))$ is the field of fractions of $\mathbb{C}[[t]]$.  By assumption there is an isomorphism $\varphi : R_1 \to R_2$ that induces an isomorphism $$\psi:
 ff ( \tau(R_1)) = \mathbb{Q}({t^{dn}, g(t)^{n}}) \to ff(\tau(R_2)) = \mathbb{Q}(t^{d c_1}, g(t)^{c_2})$$ that is identity on $\mathbb{Q}$ and sends {$t^{dn}$} to $ t^{dc_1}  $,$g(t)^{n}$ to $g(t)^{c_2}$, where as before $ff$ denotes field of fractions. 
 
 Another way of interpreting the Puiseux-Newton Theorem is that the algebraic closure of $\mathbb{C} ((t))$ is $K = \cup_{m > 0 } \mathbb{C} ((t^{1/m}))$. Let $\epsilon_m$ be a primitive $m$-th root of unity.
 Set $$L = \mathbb{Q} ( \{ \epsilon_m, t^{1/m}, g^{1/m} | m > 0  \}) \subseteq {K}$$ and $L_1 = ff ( \tau(R_1))$. Since $L $ is an algebraic field extension of $L_1$ and $K$ is algebraically closed the map $\psi$ extends to a homomorphism of fields
 $$\widetilde{\psi} : L \to K$$
 Since {$\psi (t^{dn}) =  t^{dc_1}$} we can choose $\widetilde{\psi}$ such that
 $$\widetilde{\psi} (t^{1/m}) = t^{c_1/ {n} m},$$
 $$\widetilde{\psi} (g^{1/m}) =  g^{c_2/ {n} m}  u_m$$ and
 $$\widetilde{\psi} ( \epsilon_m) = \epsilon_m^{z_m} \hbox{ for some } z_m \in \{ 1,2, \ldots , m-1 \},$$
 where $u_m$ are appropriate roots of 1 in $\mathbb{C}$.
 By construction $Im (\widetilde{\psi}) \subseteq L$, thus we can itterate $\widetilde{\psi}$ as many times as we want. Then
 $$0 = \widetilde{\psi}^j (0) = \widetilde{\psi}^j( F(t^d, g(t))) = 
   F(\widetilde{\psi}^j (t^d), \widetilde{\psi}^j(g(t)))=
   $$
   $$ F( t^{d (c_1/ n)^j}, g(t)^{ (c_2/ n)^j} v_j)$$ where $v_j$ is a root of 1 in $\mathbb{C}$. Set $t_j = t^{(c_1/ n)^j}$, hence $t = t_j^{(n/ c_1)^j}$.
   Hence we have
   $$F(t_j^d, g( t_j^{(n/ c_1)^j})^{ (c_2/ n)^j} v_j) = 0$$
   Since there is an obvious isomorphism of $K$ that sends $t_j$ to $t$ and is identity on $\mathbb{C}$ we have that
   $$F(t^d, g( t^{(n/ c_1)^j})^{ (c_2/ n)^j} v_j) = 0$$
   Note that $F(t^d, Y) = 0$ is a non-zero polynomial
with variable $Y$, with 
$$\{   g( t^{(n/ c_1)^j})^{ (c_2/ n)^j} v_j | j \geq 0\}$$  a subset of the set of roots. Since a polynomial has only finitely many roots in a fixed field, in our case $L$, we conclude that there is an infinite set of positive integers
$$j_1 < j_2 < j_3 < \ldots $$
   such that
  $$  g( t^{(n/ c_1)^j})^{ (c_2/ n)^j} v_j$$ have the same value for all $j \in \{ j_1, j_2, \ldots \}$.
  
  Now we consider the  power series $$g(t) = a_0 + a_i t^i + \hbox{ higher terms }$$
  where $a_i \not= 0$.
  
 Note that we have 
  \begin{equation}  \label{eq} g( t^{(n/ c_1)^{j_1}} )^{ (c_2/ n)^{j_1}} v_{j_1} =
  g( t^{( n/ c_1)^{j_2}} )^{ (c_2/ n)^{j_2}} v_{j_2}\end{equation}
  Hence
  $$ ( a_0 + a_i t^{i(n/ c_1)^{j_1}}  + \hbox{ higher terms } )^{ (c_2/ n)^{j_1}} v_{j_1} = ( a_0 + a_i t^{i (n/ c_1)^{j_2}}  + \hbox{ higher terms } )^{ (c_2/ n)^{j_2}} v_{j_2}. $$

  {1$^{st}$} case. $g = \lambda t^z$ for some $ z > 0$, $\lambda \in \mathbb{C} \setminus \{ 0 \}$.
By (\ref{eq})
  $$ \lambda^{ (c_2/ {n})^{j_1}} t^{z(n c_2/ c_1 {n})^{j_1}} v_{j_1} = \lambda^{ (c_2/ n)^{j_2}}  t^{z(n c_2/ c_1 n)^{j_2}} v_{j_2}.$$
  Comparing the exponents of $t$  and using that $j_1 \not= j_2$ we get $c_1 = c_2$.
 Suppose $c_2 \not= n$.  Then  since $\lambda^{ (c_2/ n)^{j_1}} v_{j_1} = \lambda^{ (c_2/ n)^{j_2}} v_{j_2}$ we conclude that $\lambda$ is algebraic over $\mathbb{Q}$.
  
   Since  $x = t^d, y = \lambda t^z$ imply $q = x^{-z} y^d = \lambda^d$ and $q$ belongs to the image of $Q$ in $\mathbb{C}((t))$. This contradicts the fact that no non-trivial element of  $\overline{Q}$ is algebraic over $\mathbb{Q}$. Hence $c_2 = n$.

    {2$^{nd}$} case. Suppose that $a_0 \not= 0$ and  $g \not= \lambda t^z$ for  $ z \geq 0$.
  Thus
$$  ( a_0 + a_i t^{i({n}/ c_1)^{j_1}}  + \hbox{ higher terms } )^{ (1/ {n})^{j_1}} = b_0 + b_i   t^{i({n}/ c_1)^{j_1}} + \hbox{ higher terms }$$
 where $b_0^{n^{j_1}} = a_0$, $b_i \not= 0$, $b_0 \not= 0$. Similarly 
$$  ( a_0 + a_i t^{i({n}/ c_1)^{j_1}}  + \hbox{ higher terms } )^{ (1/ {n})^{j_2}} = \beta_0 + \beta_i   t^{i({n}/ c_1)^{j_2}} + \hbox{ higher terms }$$
 where $\beta_0^{n^{j_2}} = a_0$, $\beta_i \not= 0$, $\beta_0 \not= 0$.
Then
$$ (b_0 + b_i   t^{i({n}/ c_1)^{j_1}} + \hbox{ higher terms } )^{c_2^{j_1}} v_{j_1} = ( \beta_0 + \beta_i   t^{i({n}/ c_1)^{j_2}} + \hbox{ higher terms })^{c_2^{j_2}} v_{j_2}$$
and so
$$
(b_0^{c_2^{j_1}} + c_2^{j_1} b_0^{c_2^{j_1}-1} b_i t^{i({n}/ c_1)^{j_1}} + \hbox{ higher terms } ) v_{j_1} =
( \beta_0^{c_2^{j_2}} + c_2^{j_2} \beta_0^{c_2^{j_2}-1} \beta_i   t^{i({n}/ c_1)^{j_2}} + \hbox{ higher terms }) v_{j_2}.
$$
Thus $t^{i({n}/ c_1)^{j_1}} =  t^{i({n}/ c_1)^{j_2}}$, hence $i({n}/ c_1)^{j_1} = i({n}/ c_1)^{j_2}$  and this combined with $j_1 \not= j_2$ implies   ${n} = c_1$.
The problem is symmetric with respect to exchanging $x$ by $y$, thus similarly we have ${n} = c_2$.

{3$^{rd}$} case. Suppose that $a_0 = 0$ and   $g \not= \lambda t^z$ for  $ z \geq 0$. Then we have for some $a_i \not= 0, a_k \not= 0$
$$g = a_i t^i + a_k t^k + \hbox{ higher terms} = t^i ( a_i + a_k t^{k-i}+ \hbox{ higher terms} )$$
 Then
  $$ ( a_i t^{i({n}/ c_1)^{j_1}}  + a_k t^{k({n}/ c_1)^{j_1}}  
  {+}  \hbox{ higher terms } )^{ (c_2/ {n})^{j_1}} v_{j_1} =$$ $$ (a_i t^{i({n}/ c_1)^{j_2}}  +  a_k t^{k({n}/ c_1)^{j_2}}  
  {+}\hbox{ higher terms } )^{ (c_2/ {n})^{j_2}} v_{j_2}. $$
  Thus
  $$  t^{i({n} c_2/ c_1 {n})^{j_1}}  ( a_i  + a_k t^{(k-i)({n}/ c_1)^{j_1}}   +  \hbox{ higher terms } )^{ (c_2/ {n})^{j_1}} v_{j_1}= $$
  $$ t^{i({n} c_2/ c_1 {n})^{j_2}} ( a_i  + a_k t^{(k-i)({n}/ c_1)^{j_2}}   +  \hbox{ higher terms } )^{ (c_2/ {n})^{j_2}} v_{j_2}.$$
Then comparing the lowest powers of $t$ we get ${i({n} c_2/ c_1 {n})^{j_1}}  = {i({n} c_2/ c_1 {n})^{j_2}}$, hence ${c_2/ c_1}= 1$, which implies that $c_1 = c_2$.
  
  Note that
$$  ( a_i  + a_k t^{(k-i)({n}/ c_1)^{j_1}}   + \hbox{ higher terms } )^{ (1/ {n})^{j_1}} = b_0 + b_i   t^{ (k-i)({n}/ c_1)^{j_1}} + \hbox{ higher terms },$$
 where $b_0^{n^{j_1}} =  a_i \not=0$, $b_i \not= 0$. Similarly 
$$  (  a_i  + a_k t^{(k-i)({n}/ c_1)^{j_1}}  + \hbox{ higher terms } )^{ (1/ {n})^{j_2}} = \beta_0 + \beta_i   t^{(k-i)({n}/ c_1)^{j_2}} + \hbox{ higher terms },$$
 where $\beta_0^{n^{j_2}} =  a_i  \not= 0$, $\beta_i \not= 0$.
Then
$$ (b_0 + b_i   t^{(k-i)({n}/ c_1)^{j_1}} + \hbox{ higher terms } )^{c_2^{j_1}} v_{j_1} = $$ $$  ( \beta_0 + \beta_i   t^{(k-i)({n}/ c_1)^{j_2}} + \hbox{ higher terms })^{c_2^{j_2}} v_{j_2}$$
and so
$$
 (b_0^{c_2^{j_1}} + c_2^{j_1} b_0^{c_2^{j_1}-1} b_i t^{(k-i)({n}/ c_1)^{j_1}} + \hbox{ higher terms } ) v_{j_1} =$$
 $$ 
( \beta_0^{c_2^{j_2}} + c_2^{j_2} \beta_0^{c_2^{j_2}-1} \beta_i   t^{(k-i)({n}/ c_1)^{j_2}} + \hbox{ higher terms }) v_{j_2}.
$$
Comparing the exponents of $t$ that appear in the above equality we conclude that
$$ (k-i)({n}/ c_1)^{j_1} =  (k-i)({n}/ c_1)^{j_2}
$$
This combined with $j_1 \not= j_2$ implies 
 ${n} = c_1$. 

\end{proof}

\section{More on virtual endomorphisms of metabelian groups} \label{virtual}

\begin{lemma} \label{L1} Let  $G = A \rtimes Q$ be a finitely generated group, where $A$ and $Q$ are abelian. Let $f : H \to G$ be a virtual endomorphism such that $H = A_0 \rtimes Q_0$, $f(A_0) \subseteq A$. Then there is a virtual endomorphism $\widehat{f} : H \to G$ such that $f$ and $\widehat{f}$ coincide on $A_0$ and $\widehat{f}(Q_0) \subseteq Q$.
\end{lemma}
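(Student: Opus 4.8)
The plan is to construct $\widehat f$ directly: keep the $A_0$-part of $f$ as is, and on the $Q_0$-part replace $f$ by its composite with the projection $G\to Q$, then check that the two pieces assemble into a homomorphism on the semidirect product $H=A_0\rtimes Q_0$.

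First I would let $p\colon G=A\rtimes Q\to Q$ be the canonical projection; this is a group homomorphism since $A$ is normal in $G$, and $G/A\cong Q$. Set $\pi=p\circ f|_{Q_0}\colon Q_0\to Q$, which is a homomorphism being a composite of homomorphisms. For each $q_0\in Q_0$ write $f(q_0)=a(q_0)\,\pi(q_0)$ with $a(q_0)\in A$ the uniquely determined element; this is just the normal form of $f(q_0)$ in the semidirect product, and $\pi(q_0)=p(f(q_0))$ is its image in $Q$.

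Next I would define $\widehat f$ on the two canonical subgroups of $H=A_0\rtimes Q_0$ by $\widehat f|_{A_0}=f|_{A_0}$ and $\widehat f|_{Q_0}=\pi$. To see that this determines a homomorphism $\widehat f\colon H\to G$, one must verify the single compatibility relation coming from the semidirect-product structure of $H$, namely that $\pi(q_0)^{-1}f(a_0)\pi(q_0)=f(q_0^{-1}a_0q_0)$ for all $a_0\in A_0$, $q_0\in Q_0$. Here the right-hand side equals $f(q_0)^{-1}f(a_0)f(q_0)=\pi(q_0)^{-1}a(q_0)^{-1}f(a_0)a(q_0)\pi(q_0)$; since $f(A_0)\subseteq A$ and $A$ is abelian, $a(q_0)^{-1}$, $f(a_0)$ and $a(q_0)$ all lie in $A$ and commute, so the two factors $a(q_0)^{-1}$ and $a(q_0)$ cancel, yielding exactly the left-hand side. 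Thus $\widehat f$ is a well-defined homomorphism $H\to G$ agreeing with $f$ on $A_0$.

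Finally I would observe that $\widehat f$ is a virtual endomorphism: its domain $H$ has finite index in $G$ because $f$ is a virtual endomorphism, it coincides with $f$ on $A_0$ by construction, and $\widehat f(Q_0)=\pi(Q_0)\subseteq Q$. There is essentially no serious obstacle here; the only point requiring care is the cancellation step, i.e. noting that because $A$ is abelian the element $a(q_0)$ centralises $f(A_0)$, which is precisely what makes deleting it from $f(q_0)$ harmless and keeps the semidirect-product relations in $H$ intact.
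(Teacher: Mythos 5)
Your proposal is correct and follows essentially the same route as the paper: both define $\widehat f$ on $Q_0$ as the $Q$-component of $f$ (your $\pi=p\circ f|_{Q_0}$ is the paper's $q\mapsto\widetilde q$), keep $f$ on $A_0$, and use that $f(A_0)\subseteq A$ with $A$ abelian to cancel the $A$-part of $f(q_0)$ in the conjugation relation. The only cosmetic difference is that you get multiplicativity of $\widehat f|_{Q_0}$ for free from the projection $G\to Q$, where the paper checks it by a direct normal-form computation.
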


\begin{proof} We write $a^q$ for $ q^{-1} a q$, where $a \in A, q \in Q$. For $q \in Q_0$ we have $f(q) = a_q \widetilde{q}$ for some $a_q \in A, \widetilde{q} \in Q$. Then we define $\widehat{f}(q) = \widetilde{q} \hbox{ and }  \widehat{f} |_{A_0} = f| _{A_0}.$
Note that for $a \in A_0, q \in Q_0$ we have
$$\widehat{f}(a^q) = f(a^q) = f(a)^{f(q)} = f(a)^{a_q \widetilde{q}} = f(a)^{\widetilde{q}} = \widehat{f}(a)^{\widehat{f}(q)}$$
Since for $q_1, q_2 \in Q_0$ we have $a_{q_1 q_2} \widetilde{q_1 q_2} = f(q_1 q_2) = f(q_1) f(q_2) = a_{q_1} \widetilde{q}_1 a_{q_2} \widetilde{q}_2 = a_{q_1} ( a_{q_2}^{\widetilde{q}_1^{-1}}) \widetilde{q}_1 \widetilde{q}_2$ we obtain that
$$\widehat{f}(q_1 q_2) = \widetilde{q_1 q_2} = \widetilde{q_1} \widetilde{q_2} = \widehat{f}(q_1) \widehat{f}(q_2).$$

\end{proof}

Let $G = A \rtimes Q$ be a  group with $A$ and $Q$ abelian groups. Recall that we view $A$ as a right $\mathbb{Z} Q$-module, where $Q$ acts via conjugation and the operation  $+$ in $A$ is the underlying group operation restricted to $A$. We denote by $\circ$ the action of $Q$ on $A$ i.e. $$\hbox{for }a \in A, q \in Q \hbox{ we have }a \circ q = q^{-1} a q.$$ 
The neutral element of $A$ considered as a $\mathbb{Z} Q$-module is denoted by $0_A$ and it coincides with the neutral element $1_G$ of $G$ considered as a group with multiplicative notation. Note that we use multiplicative notation for the group operation in $Q$. If $G$ is finitely generated as a group, then $A$ is finitely generated as a $\mathbb{Z} Q$-module. If $A$ is a cyclic $\mathbb{Z} Q$-module, then $A \simeq \mathbb{Z} Q/ I$, where $I = ann_{\mathbb{Z} Q}(A)$ is the annihilator of $A$ in $\mathbb{Z} Q$. In this case $A$ has the additional structure of a ring via the isomorphism $A \simeq \mathbb{Z} Q/ I$ and this ring is an integral domain precisely when $I$ is a prime ideal.

Recall that a virtual endomorphism $f : H \to G$ is a group homomorphism, where $H$ is a subgroup of finite index in $G$.

\begin{lemma} \label{L2} Let $G = A \rtimes Q$ be a finitely generated group, where $A$ and $Q$ are abelian. We view $A$ as a $\mathbb{Z} Q$-module via conjugation. Suppose that $A \simeq \mathbb{Z} Q/ I$ is a cyclic $\mathbb{Z} Q$-module and a domain such that $C_Q(A) = 1_Q$. Let $f : H \to G$ be a virtual endomorphism such that $H = A_0 \rtimes Q_0$ and $f(A_0) \not\subseteq A$. Then there is a $\mathbb{Z} Q$-submodule $C$ of $A$ such that $A/ C$ is a finitely generated abelian group and $f(C) = 1_G$.
\end{lemma}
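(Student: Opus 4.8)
The plan is to extract from the hypothesis $f(A_0)\not\subseteq A$ a homomorphism $A_0\to Q$ and to study its kernel. Let $\pi\colon G\to Q$ denote the canonical projection and set $\phi=\pi\circ f|_{A_0}\colon A_0\to Q$, a homomorphism of abelian groups; the hypothesis is precisely that $\phi(A_0)\neq 1_Q$. Put $A_1=\ker\phi=\{a\in A_0\mid f(a)\in A\}$. Since $A_0/A_1\cong\phi(A_0)$ is a subgroup of the finitely generated abelian group $Q$ and $[A:A_0]<\infty$, the quotient $A/A_1$ is finitely generated abelian. Moreover $A_1$ is normal in $H$: as $Q$ is abelian, for $h\in H$ and $a\in A_0$ one has $\phi(hah^{-1})=\pi(f(h))\,\pi(f(a))\,\pi(f(h))^{-1}=\pi(f(a))=\phi(a)$, so $\phi$ is constant on $H$-conjugacy classes. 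Finally $f(A_1)\subseteq A$ by the very definition of $A_1$.

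The crux is to show that in fact $f(A_1)=1_G$; this is where I expect the real content of the lemma to sit. Suppose not and choose $a_1\in A_1$ with $a_2:=f(a_1)\neq 0$ in $A$. The subgroup $f(A_0)\leq G$ is abelian, being a homomorphic image of the abelian group $A_0$, and it contains $a_2$; hence every $f(a_0)$ with $a_0\in A_0$ commutes with $a_2$. Write $f(a_0)=b\,w$ with $b\in A$ and $w=\phi(a_0)\in Q$. Using that $A$ is abelian one computes $f(a_0)^{-1}a_2\,f(a_0)=w^{-1}(b^{-1}a_2 b)w=w^{-1}a_2 w=\overline{w}\,a_2$ in the ring $A\cong\mathbb{Z} Q/I$, so commutation forces $(\overline{w}-1)a_2=0$. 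Since $A$ is a domain and $a_2\neq 0$ this gives $\overline{w}=1_A$, hence $w\in C_Q(A)=1_Q$. As $a_0\in A_0$ was arbitrary, $\phi(A_0)=1_Q$, contradicting the hypothesis. The point is exactly this: a non-trivial ``$Q$-component'' of $f(A_0)$ would centralise a non-zero element of the domain $A$, which is impossible once $C_Q(A)$ is trivial.

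Finally I upgrade $A_1$, which is only a $\mathbb{Z} Q_0$-submodule of $A_0$, to an honest $\mathbb{Z} Q$-submodule. Let $g_1,\dots,g_r$ be a transversal of $H$ in $G$, finite since $[G:H]<\infty$, with $g_1=1$, and set $C=\bigcap_{i=1}^r g_iA_1g_i^{-1}$. Because $A_1\trianglelefteq H$, this intersection is independent of the chosen representatives, hence normal in $G$; and since $C\subseteq A_1\subseteq A$ with $A$ abelian, being normal in $G$ is the same as being a $\mathbb{Z} Q$-submodule of $A$. For each $i$, conjugation by $g_i$ is an automorphism of $A$ (as $A\trianglelefteq G$) taking $A_1$ onto $g_iA_1g_i^{-1}$, so $A/g_iA_1g_i^{-1}\cong A/A_1$ is finitely generated abelian; therefore $A/C$ embeds into $\bigoplus_{i=1}^r A/g_iA_1g_i^{-1}$ and is finitely generated abelian. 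Since $g_1=1$ we have $C\subseteq A_1$, whence $f(C)\subseteq f(A_1)=1_G$, so $C$ is the submodule required. (Here I use only that $A_0$ is normal in $H$ of finite index in $A$, which follows from $H=A_0\rtimes Q_0$ with $A_0\leq A$ and $[G:H]<\infty$.)
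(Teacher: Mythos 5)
Your proof is correct and follows essentially the same route as the paper: the same subgroup $A_1=\{a\in A_0\mid f(a)\in A\}$, the same commutation-in-$f(A_0)$ computation turned into $(\overline{w}-1)a_2=0$ and resolved via the domain property and $C_Q(A)=1_Q$ (you argue contrapositively where the paper kills $f(A_1)$ directly), and the same passage to a normal core intersected over a finite transversal to get the $\mathbb{Z}Q$-submodule $C$ with $A/C$ finitely generated.
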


\begin{proof}
Note that since $H$ has finite index in $G$, we have that $A_0$ has finite index in $A$ and $Q_0$ has finite index in $Q$.
Let $$A_1 = \{ a \in A_0 \ | \ f(a) \in A \}.$$  Recall that for $ a \in A, q \in Q$ we write $a \circ q$ for $ q^{-1} a q$.
Note that for $a \in A_1, q \in Q_0$ we have $f(a \circ q) = f(a^q) = f(a) ^{f(q)} \in A^{f(q)} = A$ i.e. $A_1$ is a $\mathbb{Z} Q_0$-submodule of $A_0$.

 Let $a_0 \in A_0 \setminus A_1$, hence $f(a_0) \notin A$. Since $[ A_1, a_0] = 1_G$ we have $[f(A_1), f( a_0)] = 1_G$. Then $f(A_1) \circ (\pi( f(a_0)) - 1) = 0_A$, where $\pi : G \to Q$ is the canonical projection. Since $ a_0 \notin A_1$ we have that $\pi(f(a_0)) \in  Q \setminus 1_Q$. Since $A$ is a domain and $C_Q(A) = 1_Q$
we conclude that $f(A_1) = 0_A =  1_G$.

Set $T$ a transversal of $Q_0$ in $Q$ such that $ 1_Q \in T$ and $C = \cap_{t \in T} A_1 \circ t$. Note that $T$ is finite and that $C$ is a $\mathbb{Z} Q$-submodule of $A$.

We claim that $A_0/ A_1$ embeds in $Q$. Indeed the group homomorphism $\pi \circ f |_{A_0} : A_0 \to Q$ has kernel $A_1$.
 Since $A_0/ A_1$ embeds in $Q$, we have that $A_0/ A_1$ is finitely generated and  since $A/ A_0$ is finite, we obtain that $A/ A_1$ is finitely generated  as an abelian group. Then for any $q \in Q$ we have an isomorphism of additive abelian groups
 $A/ A_1 \simeq (A \circ q)/ (A_1 \circ q)= A / (A_1 \circ q)$ that sends $a + A_1$ to $ a \circ q + A_1 \circ q$. In particular  $A / (A_1 \circ q)$ is a finitely generated abelian group.
 
 Now, for each $t \in T$, consider the projection map of abelian groups $f_t : A \to A / (A_1 \circ t)$ and {so} for the map $f : A \to \prod_{t \in T}  A / (A_1 \circ t)$ given by $f(a) = \prod_{t \in T} f_t(a)$ we have that $Ker(f) = C$ and   the codomain of $f$ is a finitely generated abelian group.
 Then $A/ C$ is finitely generated and $f(C) \subseteq f(A_1) = 1_G$.
\end{proof}

We denote by $Aug$ the augmentation ideal of the relevant group algebra.

\begin{lemma} \label{L3} Let $G = A \rtimes Q$ be a finitely generated group, where $A$ and $Q$ are abelian. Let $f : H \to G$ be a virtual endomorphism such that $H = A_0 \rtimes Q_0$ and $f(Q_0) \subseteq Q$. Then $f(A_0 \circ Aug( \mathbb{Z} Ker f|_{Q_0})) = 1_G$. 
\end{lemma}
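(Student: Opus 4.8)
The plan is to reduce the assertion to a one-line computation on a generating set and then invoke the fact that $f$ is a group homomorphism. Set $K = Ker(f|_{Q_0}) \leq Q_0$. Since $Aug(\mathbb{Z} K)$ is the kernel of the augmentation map $\mathbb{Z} K \to \mathbb{Z}$, it is spanned as an abelian group by the elements $k-1$ with $k \in K$; hence $A_0 \circ Aug(\mathbb{Z} K)$ — recall $Aug(\mathbb{Z} K) \subseteq \mathbb{Z} Q_0$ and $A_0$ is a $\mathbb{Z} Q_0$-module, so this makes sense and lies in $A_0$ — is contained in the subgroup of $A_0$ generated by the elements $a \circ (k-1) = (a \circ k) - a$ with $a \in A_0$, $k \in K$. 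Translating the module action $\circ$ into conjugation and the additive notation on $A$ into the multiplicative notation of $G$, such a generator is the group element $(k^{-1} a k)\, a^{-1}$, which lies in $A_0 \subseteq H$, so $f$ is defined on it.

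The first and only real step is to check that $f$ kills each such generator. Since $f : H \to G$ is a group homomorphism and $a, k \in H$,
\[
 f\big((k^{-1} a k)\, a^{-1}\big) = f(k)^{-1} f(a) f(k) f(a)^{-1},
\]
and because $f(k) = 1_G$ by the choice of $K = Ker(f|_{Q_0})$, the right-hand side reduces to $f(a) f(a)^{-1} = 1_G$. Thus $a \circ (k-1) \in Ker\, f$ for all $a \in A_0$ and $k \in K$.

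It remains only to conclude: the restriction of $f$ to the subgroup of $H$ generated by these elements is again a homomorphism, so $f$ annihilates that subgroup, and $A_0 \circ Aug(\mathbb{Z} K)$ is contained in it; hence $f\big(A_0 \circ Aug(\mathbb{Z} K)\big) = 1_G = 0_A$, as claimed. I do not expect any genuine obstacle here: the only point requiring attention is the careful bookkeeping between the additive module notation on $A$ and the multiplicative group notation on $G$ when identifying the generators $a \circ (k-1)$ and checking they lie in $Ker\,f$. Note in particular that the hypothesis $f(Q_0) \subseteq Q$ is not actually used in this argument; it merely fixes the set-up in which the lemma will subsequently be applied.
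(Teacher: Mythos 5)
Your proof is correct and is essentially the paper's own argument: the paper likewise reduces to the generators $a\circ(q-1)=[a,q]$ with $q\in Ker(f|_{Q_0})$ and computes $f([a,q])=[f(a),f(q)]=[f(a),1]=1$, the passage from these generators to all of $A_0 \circ Aug(\mathbb{Z}\,Ker f|_{Q_0})$ being immediate by additivity. Your side remark that the hypothesis $f(Q_0)\subseteq Q$ is not needed for this computation is also accurate; it only serves to set up the context in which the lemma is applied.
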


\begin{proof} Suppose that $q \in Q_0$ and $f(q) = 1$. Then for $ a \in A_0$ we have $f( a \circ ( q - 1)) = f([a, q]) = [ f(a), f(q)] = [f(a), 1] = 1$.
\end{proof}

  We recall the definition of a virtual-endomorphism-finite metabelian group that was first stated in the introduction.

{\bf Definition} {\it  Let $G  = A \rtimes Q$, with $A$ and $Q$ abelian, $G$ finitely generated.    Consider  a finite set of virtual endomorphisms   $$f^{(i)} : A_i \rtimes Q_i \to G,$$  such that for $1 \leq i \leq k$ we have 
   
   1) $f^{(i)} (A_i) \subseteq A$,  
   
   2) there is NOT a positive integer $m_i$ such that $m_i A \subseteq A_i$ and $f^{(i)}(m_i A) = 0$,
   
   3)  $f^{(i)}({Q_i}) \subseteq Q$, 
   
   4) $f_0^{(i)} = f^{(i)} |_{Q_i}$ is injective.
   
  We say that $G  = A \rtimes Q$ is virtual-endomorphism-finite if for any finite set of virtual endomorphisms as above    we have that $\{f_0^{(i)} \}_{1 \leq i \leq k}$ generates a {\bf finite} group of injective homomorphisms $\widetilde{Q} \to Q$, where  $\widetilde{Q}$ is a subgroup of finite index in $\cap_{1 \leq i \leq k} \ Q_i$. }

  \begin{theorem} \label{MainThm}  Let $G = A \rtimes Q$ be a group, where $A$ and $Q$ are abelian,  $Q = \mathbb{Z}^s$, $s \geq 2$.
We view $A$ as a right $\mathbb{Z} Q$-module via conjugation and assume that

1) $A$ is a cyclic  $\mathbb{Z} Q$-module, say $A \simeq \mathbb{Z} Q/ I$, $A$ is a $\mathbb{Z}$-torsion-free  integral domain and $Krull dim (A) = 2$;

2)  for every prime number $p$ the ring $A/ pA$ is  an  infinite  integral domain;

3) the image of a non-trivial element of $Q$ in the field of fractions of $A$ is not algebraic over $\mathbb{Q}$. In particular $C_Q(A) = 1_Q$;

4) $G$ is finitely presented;

5) $G$ is virtual-endomorphism-finite.

Then $G$ is not a self-similar group.
\end{theorem}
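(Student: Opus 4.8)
The plan is to argue by contradiction: suppose $G = A \rtimes Q$ is self-similar. By the intransitive version of the Nekrashevych--Sidki criterion (Theorem \cite{Brazil}, \cite{PhD}), there are virtual endomorphisms $f_i : H_i \to G$, $1 \leq i \leq k$, such that no non-trivial normal subgroup $K$ of $G$ satisfies $K \subseteq \cap_i H_i$ and $f_i(K) \subseteq K$ for all $i$. Replacing each $H_i$ by a finite-index subgroup of the form $A_i \rtimes Q_i$ with $A_i \leq A$ a $\mathbb{Z} Q_i$-submodule (possible because $H_i \cap A$ has finite index in $A$ and we can intersect with conjugates), and applying Lemma \ref{L1} to arrange $f_i(Q_i) \subseteq Q$, we put each $f_i$ into a standard form. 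For those $i$ with $f_i(A_i) \not\subseteq A$, Lemma \ref{L2} (using condition 3, so $C_Q(A) = 1_Q$, and that $A$ is a domain) produces a $\mathbb{Z} Q$-submodule $C$ with $A/C$ finitely generated abelian and $f_i(C) = 1_G$; such $f_i$ therefore cannot obstruct invariance of a submodule contained in $C$, so we may restrict attention to the $f_i$ with $f_i(A_i) \subseteq A$.

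Next I would split the remaining indices into two classes according to condition 2) of the virtual-endomorphism-finite definition. If for some $i$ there \emph{is} a positive integer $m_i$ with $m_i A \subseteq A_i$ and $f_i(m_i A) = 0$, then $m_i A$ is a non-trivial normal subgroup (an ideal, hence normal in $G$) of finite index in $A$ on which $f_i$ vanishes. For the indices $i$ where no such $m_i$ exists, these are precisely the data feeding into the virtual-endomorphism-finite hypothesis (condition 5), once we also ensure $f_i|_{Q_i}$ is injective --- this follows from Lemma \ref{L3}: if $\operatorname{Ker}(f_i|_{Q_i}) \neq 1$, then $f_i$ kills $A_i \circ \operatorname{Aug}(\mathbb{Z}\operatorname{Ker}(f_i|_{Q_i}))$, which is a large enough submodule to again furnish an invariant $K$; so either $f_i|_{Q_i}$ is injective or $f_i$ is harmless. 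Thus condition 5) applies and gives that $\{f_i|_{Q_i}\}$ generates a \emph{finite} group of injective homomorphisms of some finite-index $\widetilde{Q} \leq \cap_i Q_i$ into $Q$.

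Now comes the main use of the technical engine. For each of the finitely many relevant $f_i$, Theorem \ref{oldA1} applies --- its hypotheses 1), 2), 4) are our 1), 2), 3), hypothesis 3) holds because $A$ is a $Q$-homothety rigid ring by Theorem \ref{rigid} (whose hypotheses are exactly ours: $Krulldim(A) = 2$, $\mathbb{Z}$-torsion-free domain, no non-trivial element of $\overline{Q}$ algebraic over $\mathbb{Q}$), and hypothesis 5) holds since otherwise $\Sigma_A^c(Q)$ would contain a circle $S^1$, but then $\Sigma_A^c(Q) \cup -\Sigma_A^c(Q)$ could not be all of $S(Q)$ unless... actually this last point needs care and is where I expect the argument to be delicate: one must show that finite presentability of $G$ (condition 4, equivalently $S(Q) = \Sigma_A(Q) \cup -\Sigma_A(Q)$ by Theorem \ref{fin-pres-0}) rules out $\Sigma_A^c(Q)$ containing such a circle, presumably because a circle together with its antipode already disconnects things, or because the tameness of a Krull-dimension-2 domain forces $\Sigma_A^c(Q)$ to be genuinely lower-dimensional. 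Granting hypothesis 5), Theorem \ref{oldA1} gives for each relevant $i$ a finite-index submodule $M_i = m_i A + \mu_i A$ of $A_i$ with either $f_i(M_i) = 0$ (case b1) or $f_i(m_i A) \subseteq m_i A$ and $f_i(\mu_i A) \subseteq \mu_i A$ with $f_i^{k_i}|_{Q^{n_i}}$ the inclusion (case b2). The finiteness from condition 5) lets us pass to a common power so that all the $f_i|_{\widetilde{Q}}$, restricted to a single finite-index $Q^N \leq \widetilde{Q}$, become the inclusion after the \emph{same} exponent; intersecting all the $M_i$ (and the $C$'s from the non-$f_i(A_i) \subseteq A$ indices, and the $m_i A$'s from the degenerate indices) produces a non-trivial $\mathbb{Z} Q$-submodule $K$ of $A$ --- hence a normal subgroup of $G$ --- that is contained in $\cap_i H_i$ and satisfies $f_i(K) \subseteq K$ for every $i$, contradicting the self-similarity criterion. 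The hard part, I expect, is twofold: verifying hypothesis 5) of Theorem \ref{oldA1} from finite presentability, and the bookkeeping in the last step --- reconciling the various $M_i$, the shifts by submodules $\mu_i A$, and the common-power normalization of the $f_i|_{Q}$ so that a single $K$ works simultaneously for all $i$.
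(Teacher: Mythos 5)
Your outline follows the paper's skeleton quite closely (self-similarity criterion, reduction to $A_i\rtimes Q_i$ via Lemma \ref{L1}, the case split using Lemmas \ref{L2} and \ref{L3}, then Theorem \ref{oldA1} together with Theorem \ref{rigid} for the endomorphisms that are injective on $Q_i$ and do not kill a multiple of $A$), but the final assembly, which you dismiss as ``bookkeeping'', is where the actual content of condition 5) is used and where your proposal has a genuine gap. Intersecting the $M_i$, the $C$'s and the $m_iA$'s does not produce an invariant subgroup: a type--2.2.2 endomorphism $f^{(j)}$ is only semilinear ($f^{(j)}(a\lambda)=f^{(j)}(a)\widetilde f_0^{(j)}(\lambda)$), so while it preserves $m_jA$, it has no reason to preserve the ideals $J_i$ coming from the \emph{other} indices (the kernels from Lemma \ref{L2}/\ref{L3} or the other $m_iA$'s), hence no reason to preserve the intersection. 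The paper's resolution is not a common-power normalization of the $f_0^{(i)}$'s but a ring-of-invariants argument: the finite group $O$ generated by the $f_0^{(i)}$ (this is exactly what hypothesis 5 supplies) has an invariant subring $R\subseteq\mathbb{Z}\widetilde Q$ over which $\mathbb{Z}Q$ is integral; letting $B$ be the image of $R$ in $A$, one proves via Krull dimension (integrality forces $\mathrm{Krulldim}(B)=\mathrm{Krulldim}(A)$, so $B$ cannot embed in $A/J$) that $B\cap J\neq 0$ for every nonzero ideal $J$ of $A$. Taking $J=J_1\cdots J_c$, $m=\prod_{i>c}m_i$ and $0\neq b\in J\cap B$, the submodule $C=mbA$ works: for $i>c$ one has $f^{(i)}(mAb)=f^{(i)}(mA)\,b\subseteq mAb$ \emph{because $b$ is fixed}, and for $i\leq c$ one has $C\subseteq J_i\subseteq \mathrm{Ker}\,f^{(i)}$. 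Without this fixed element $b$ the argument does not close.

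Two smaller remarks. First, your verification of hypothesis 5) of Theorem \ref{oldA1} is easier than you fear: a circle obtained by intersecting a two-dimensional subspace of $\mathrm{Hom}(Q,\mathbb{R})$ with the unit sphere is centrally symmetric, hence contains antipodal pairs, while finite presentability (Theorem \ref{fin-pres-0}) says $\Sigma_A^c(Q)$ contains no antipodal points; that is the whole argument. Second, in case 2.2.1 and in the degenerate situation where some $m_iA\subseteq A_i$ is killed, the paper does not discard these endomorphisms but records the ideals $J_i=m_iA$ (resp.\ the kernels from Lemmas \ref{L2} and \ref{L3}) and feeds them into the product $J$ above; your phrase ``such $f_i$ cannot obstruct invariance'' needs to be implemented in exactly this way, since the invariant subgroup must lie inside \emph{all} of these kernels simultaneously while still being preserved by the type--2.2.2 maps.
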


\begin{proof}

   Note that by Theorem \ref{fin-pres-0} $\Sigma_A^c(Q) = S(Q) \setminus \Sigma_A(Q)$ does not contain antipodal points, hence condition 5) from Theorem \ref{oldA1} holds.

Suppose that  $$f^{(i)} : H_i \to G = A \rtimes Q$$ is a virtual endomorphism (i.e. $f^{(i)}$ is a group homomorphism and $[G : H_i] < \infty$) for  $ 1 \leq i \leq k$ and these virtual endomorphisms show that $G$ is self-similar i.e. if $K$ is a normal subgroup of $G$ such that $f^{(i)}(K) \subseteq K$ for every  $ 1 \leq i \leq k$   then $K = 1_G$. We can define $$A_i = H_i \cap A, Q_i = H_i \cap Q$$ and substitute $H_i$ with the subgroup of finite index $A_i \rtimes Q_i$.

 As before we can assume that $A_i$ is a $\mathbb{Z} Q$-submodule of $A$, otherwise substitute $A_i$ with $\cap_{q \in T_i} A_i \circ q$, where $A_i \circ q $ by definition is $q^{-1} A_i q$ and $T_i$ is a transversal of $Q_i$ in $Q$  such that $1_Q \in T_i$, so $T_i$ is finite.

 Note that, for each  $ 1 \leq i \leq k$, we have one of the following possibilities :

1) Suppose that $f^{(i)}(A_i) \not\subseteq A$. Then  for $$J_i = \cap_{q \in T_i} \{ a \in A_i \ | \ f^{ (i)}(a) \in A \} \circ q$$
 we have
 by the proof of Lemma \ref{L2} that
$$J_i \subseteq Ker( f^{ (i)}) \hbox{ and }A/ J_i \hbox{ is finitely generated as abelian group (via }+ \hbox{).}$$  Then $J_i \not= 0$ and $J_i$ is a $\mathbb{Z} Q$-submodule of $A$, hence an ideal of $A$.

2)  Suppose that $f^{(i)}(A_i) \subseteq A$. The main idea in the proof is to produce a non-trivial normal subgroup $C$ of $G$ such that $C \subseteq A$ and such that  $f^{(j)}(C) \subseteq C$  for all  $ 1 \leq j \leq k$. Since $C \subseteq A$ we can use freely Lemma \ref{L1} and we can assume from now on that $f ^{(i)}(Q_i) \subseteq Q$. 

2.1)Suppose that the restriction of $f^{(i)}$ on  $Q_i$ is not injective and  using Lemma \ref{L3}    we set
$$
J_i = A_i \circ Aug (\mathbb{Z} Ker ( f^{ (i)} |_{Q_i})) \subseteq Ker (f^{ (i)}).
$$ Note that $J_i \not= 0$ is a $\mathbb{Z} Q$-submodule of $A_i$ since $A_i$ is a $\mathbb{Z} Q$-submodule of $A$.

2.2) $f^{ (i)}(A_i) \subseteq A$, $f^{(i)}(Q_i) \subseteq Q$ and the restriction of $f^{(i)}$ on  $Q_i$ is  injective. Then there is an integer $m_i > 0$ such that $m_i A \subseteq A_i$ and  by Theorem \ref{oldA1} and Theorem \ref{rigid}  there are two possibilities:

2.2.1)  $f^{ (i)}(m_i A) = 1_G = 0_A$, then define
$$ 0 \not= J_i = m_i A \subseteq  Ker (f^{ (i)}).
$$
Again $J_i$ is a $\mathbb{Z} Q$-submodule of $A$.

2.2.2) $0 \neq f^{(i)}(m_iA) \subseteq m_i A$  and furthermore there is not a positive integer  $\widetilde{m}_i$ such that $\widetilde{m}_i A \subseteq A_i$ and $f(\widetilde{m}_i A) = 0$, otherwise we are in case 2.2.1). Again,  define
$${J_i = }m_i A  \subseteq A_i.$$ 

Furthermore for every integer $m$ that is divisable by $m_i$ we have $f^{ (i)} (m A) \subseteq m A$. 

 So, suppose $f^{(1)}, \ldots, f^{ (k)}$ are virtual endomorphisms as above. Suppose $f^{(1)}, \ldots, f^{ (c)}$ are of type 1), 2.1) or 2.2.1)  and $f^{(c+1)}, \ldots, f^{ (k)}$ are of type 2.2.2) and neither is of type 1), 2.1), 2.2.1).

Since $G$ is virtual-endomorphism-finite we have that  the set $\{f_0^{(i)} \ | \ c+1 \leq i \leq k\}$  generates  a finite group $O$ of injective maps $\widetilde{Q} \to Q$, where  $\widetilde{Q}$ is a subgroup of finite index in $ \cap_{c+1 \leq i \leq k}   Q_i$.  
 This implies that, extending by linearity the elements of $O$ to $\mathbb{Z} \widetilde{Q} \to \mathbb{Z} Q$ there is a subring $R$ of $\mathbb{Z}\widetilde{Q}$, such that the elements of $R$ are fixed by the elements of the group $O$  and the extension $R \subseteq \mathbb{Z}\widetilde{Q}$ is integral. Since $[Q : \widetilde{Q}] < \infty$ we have that $\mathbb{Z} Q$ is integral over $\mathbb{Z} \widetilde{Q}$, hence $\mathbb{Z} Q$ is integral over $R$. Let $B$ denote the image of $R$ in $A$, then the  extension $B \subseteq A$ is integral. 
 
 \medskip
  {\bf Claim} {\it Let $J\not= 0$ be an ideal of $A$, then $B \cap J \not= 0$.}

 \medskip
 {\bf Proof}.
 Suppose $B \cap J = 0$.
 Then $B = B/ (B \cap J) \subseteq A/ J$ is an integral extension, hence
 $Krulldim(A) = Krulldim (B) = Krulldim(A/ J) < Krulldim(A)$, a contradiction.  This completes the proof of the Claim.

If $c = k$ then   we set $C =  J_1 \ldots J_k$. By construction $C$ is a $\mathbb{Z} Q$-submodule of $A$  i.e. is an ideal of $A$.
 Since   $f^{ (i)}$ is of type 1), 2.1) or 2.2.1) then
 $$f^{(i)}(C) \subseteq f^{(i)}(J_i) =  0_A = 1_G \subseteq C
 \hbox{ for } 1 \leq i \leq k.$$

If $c < k$,
set $J = J_1 \ldots J_c$ and note that $J \not= 0$ since each $J_i \not= 0$ and $A$ is a domain.
Set
$$m = \prod_{c+1 \leq i \leq k} m_i$$

Then we define $C = mb A$, where  $0 \not=b \in J \cap B$.   If $i > c$ and since $b \in B$ we have
$$f^{ (i)} ( C) =f^{ (i)} ( mbA) = f^{ (i)} ( mAb)=  f^{ (i)} ( mA) b \subseteq mAb = mbA = C. 
$$
Note that $f^{ (i)} ( mAb)=  f^{ (i)} ( mA) b$ since $b \in B$.

If $i \leq c$ we have that since $ C  \subseteq bA \subseteq J \subseteq J_i$ 
$$f^{ (i)}(C) \subseteq f^{ (i)} (J_i)  = 0_A = 1_G \subseteq C$$

Thus we have that for every $i$, $f^{ (i)}(C) \subseteq C$. 

Note that by construction  each ${J_i} \not= 0$ and $m A \not= 0$. These together with the fact that $A$ is a domain of zero characteristic, implies that in both cases $c = k$ and $c < k$ we have that  $C$ is a non-trivial $\mathbb{Z} Q$-submodule of $A$, hence is a normal non-trivial subgroup of $G$, a contradiction.

\end{proof}

\section{The proof of the Main Theorem} \label{Proof-main}

In order to apply Theorem \ref{MainThm} we need to verify condition 5) from Theorem \ref{MainThm}.
 Suppose that $\widetilde{I}$ is a finite set and $$f^{(i)} : A_i \rtimes Q_i \to G$$ is a virtual endomorphism such that for every $i \in \widetilde{I}$ we have that
 
  1) $f^{(i)} (A_i) \subseteq A$,  
   
   2) there is not a positive integer $m_i$ such that $m_i A \subseteq A_i$ and $f^{(i)}(m_i A) = 0$,
   
   3)  $f^{(i)}({Q_i}) \subseteq Q$, 
   
   4) $f_0^{(i)} = f^{(i)} |_{Q_i}$ is injective.

 We aim to show that  $f_0^{(i)}$, $i \in \widetilde{I}$ generate a finite group of injective homomorphisms $\widetilde{Q} \to Q$, where  $\widetilde{Q}$ is a subgroup of finite index in $\cap_{i \in \widetilde{I}} Q_i$.

As in the proof of Theorem \ref{oldA1} we have that each $f_0^{(i)}$ induces a permutation of a finite subset $\partial \Sigma_{A_i}^c(Q_i)$ of $\Sigma_{A_i}^c(Q_i)$ (see Step 1 from the proof of Theorem  \ref{oldA1}, where $f_*$ permutes the finite set  $\partial \Sigma_{A_0}^c(Q_0)$)  and   $\Sigma_{A_i}^c(Q_i)$ is in bijection with $\Sigma_A^c(Q)$ induced by the embedding of $A_i$ in $A$ and the embedding of $Q_i$ in $Q$. By the description it is a special finite set of boundary points, so it is the same set for each $i \in  \widetilde{I} $ as it corresponds to the set $E$ of boundary points of $\Sigma_A^c(Q)$.  Let $d$ be the number of elements in this set $E$ of boundary points. Note that by the argument of the proof of Theorem \ref{oldA1} $E$ contains only discrete points of $S(Q)$, actually this is a corollary of the structure of $\Sigma_A^c(Q)$ as a rationally defined spherical  polyhedron that follows from Theorem \ref{thm-valuation1} and Theorem \ref{val-val1}. As well by the proof of Theorem \ref{oldA1} each $(f_0^{(i)})^k = id$, where $ k = d!$ and we need to restrict $f_0^{(i)}$ to a subgroup of finite index in $\cap_{i \in \widetilde{I}} Q_i$ in order $(f_0^{(i)})^{z}$ to be well defined for all $i \in \widetilde{I}$ and $ 0 \leq z \leq k-1$.

Note that each $f_0^{(i)}$ extends to a unique automorphism $g_i$ of $\widehat{Q}$, where $\widehat{Q} = Q \otimes_{\mathbb{Z}} \mathbb{Q}$  in additive notation for $Q$ but actually it is better to use a multiplicative notation for $Q$ as the image of $Q$ in $A$ is considered as a multiplicative  subset of $A$. Note that $\widehat{Q}$ is the Malcev completion of $Q$.

Now we have to show that the group $T$ generated by $\{ g_i \ | \ i \in \widetilde{I} \}$ is finite. Note that $Hom(Q, \mathbb{R})$ is naturally isomorphic to $Hom( \widehat{Q}, \mathbb{R})$. Recall that each $g_i$ permutes the finite set $E$, and this induces a group homomorphism $$\rho : T \to Sym(E) \simeq S_d$$ where $S_d$ is the symmetric group on $d$ elements.

Let $g$ be an element of the kernel of $\rho$. Then using that each $g_i$ has finite order, we can write $g = g_{i_1} \ldots g_{i_j}$ for some $i_1, \ldots, i_j \in \widetilde{I}$. Recall that by step 3 from the proof of Theorem \ref{oldA1} for every subgroup $H^{(i)}$ of finite index in $Q_i$ there is a ring homomorphism
$$\rho^{(i)}_{H^{(i)}} : \mathbb{Z} H^{(i)}/ (I \cap \mathbb{Z} H^{(i)}) \to \mathbb{Z} M^{(i)}/ (I \cap \mathbb{Z} M^{(i)})$$
induced by $f_0^{(i)}$, where $ M^{(i)} = f_0^{(i)} ( H^{(i)})$.

Then there is a subgroup $Q_0$, that depends on $g$, and is of finite index in $Q$ such that the following groups are well-defined
$$Q_0, Q_1 = f_0^{(i_j)}(Q_0), Q_2 =  f_0^{(i_{j-1})}(Q_1), \ldots,  Q_{k+1} = f_0^{(i_{j-k})} (Q_k), \ldots, Q_j = f_0^{(i_1)}(Q_{j-1})$$
and we have the ring epimorphisms
$$\rho^{(i_j)}_{Q_0},  \rho^{(i_{j-1})}_{Q_1},  \rho^{(i_{j-2})}_{Q_2}, \ldots,  \rho^{(i_{1})}_{Q_{j-1}}$$
Then the composition
$$
\mu: = \rho^{(i_{1})}_{Q_{j-1}} \ldots \rho^{(i_{j-1})}_{Q_1}\rho^{(i_j)}_{Q_0} : \mathbb{Z} Q_0 / (I \cap \mathbb{Z} Q_0) \to \mathbb{Z} Q_j / (I \cap \mathbb{Z} Q_j)
$$ is an epimorphism of rings.
Set
 $$f_0 = f_0^{(i_1)} \ldots f_0^{(i_j)} : Q_0 \to Q$$
 and note that the extension of $f_0$ to an automorphism of $\widehat{Q}$ is precisely $g$.
 As in step 1 from the technical Theorem \ref{oldA1} using Theorem   \ref{rigidity1} $Hom(Q, \mathbb{R})$ is spanned as a $\mathbb{R}$-vector space   by characters $\chi_i$  whose projections to $S(Q)$ are inside $\Sigma_A^c(Q)$ but  any such $\chi_i$  is a $\mathbb{R}$-linear combination of characters whose projection to $S(Q)$ is inside $E =\partial \Sigma_A^c(Q)$. Thus there is a basis of $Hom(Q, \mathbb{R})$ as a vector space over $\mathbb{R}$  whose projection to $S(Q)$ is inside $E$.

Since $g \in Ker(\rho)$,  $g$ fixes the elements of $E$. This together with the fact that $Hom(Q, \mathbb{R})$ is spanned by elements whose projection to $S(Q)$ is inside $E$ and that  $g$ is induced by $f_0$ implies that  $Q$ has a multiplicative basis $x_1, \ldots, x_s$ such that  $$f_0(x_i^m) = x_i^{c_i}  \hbox{ for all }1 \leq i \leq s,$$
for some positive integer $m$ such that  $ Q^m \subseteq Q_0$ and  for appropriate integers $\{ c_i \}$.
Then for $B = f_0(Q^m)$ the restriction of $\mu$ to $\mathbb{Z} Q^m / (I \cap \mathbb{Z} Q^m)$ gives a ring epimorphism
$$\theta: \mathbb{Z} Q^m / (I \cap \mathbb{Z} Q^m) \to \mathbb{Z} B / (I \cap \mathbb{Z} B)$$
Note
that $\theta$ is induced by $f_0$ i.e. $\theta |_{\overline{Q^m}} = \overline{f_0 |_{Q^m}}$.
Then by the homothety rigidity property $f_0 = id$. Thus any element of the kernel of $\rho$ is the identity. 

Finally since $T$ is finite, for every element of $T$ we write it as a product $ g_{i_1} \ldots g_{i_j}$ and then we have corresponding $Q_0$. Then intersecting all these $Q_0$'s we get a subgroup $\widetilde{Q}$ of finite index in $Q$ such that $f_0 = f_0^{(i_1)} \ldots f_0^{(i_j)} : \widetilde{Q} \to Q$ is well defined simultaneously for all $f_0$. This completes the proof of the Main Theorem.

\section{Examples}  \label{examples}

1) The group $\mathbb{Z} \wr \mathbb{Z} \simeq A \rtimes Q $, where $$ A = \mathbb{Z}[x, 1/x],Q = \langle q \rangle \simeq \mathbb{Z}$$ with $q$ acting (via conjugation)  on $A$ by multiplication with $x$, is not finitely presented, is not transitive self-similar \cite{Brazil2} but is intransitive  self-similar \cite{Brazil}. Note that $A$ has Krull dimension 2 but is not homothety rigid and $s = 1$.  The group $G = A \rtimes Q$ satisfies conditions 1,2,3 from our Main Theorem but does not satisfy condition 4.

2) The group  $\mathbb{Z} \wr \mathbb{Z}$ embeds in $G = A \rtimes Q,$ where $$A = \mathbb{Z}[x^{\pm 1}, 1/(x+1)], Q = \langle q_1, q_2 \rangle \simeq \mathbb{Z}^2$$ and $Q$ acts on $A$ via conjugation with $q_1$ acting by multiplication with $x$ and $q_2$ acting by multiplication with $x+1$.

By \cite{B-S} $G$ is finitely presented. Indeed in the example of Section \ref{Sigma} $\Sigma_{A}^c(Q) = \partial\Sigma_A^c(Q)$ consists of three isolated points and by Theorem \ref{fin-pres-0} $G$ is finitely presented.

Moreover, $A$ has Krull dimension 2 and no element of $Q \setminus \{ 1 \} \subseteq K$ is algebraic over $\mathbb{Q}$, where $K$ is the field of fractions of $A$, so by Theorem \ref{rigid}, $A$ is $Q$-homothety rigid. Furthermore for every prime $p$ we have that $A / pA$ is an integral domain of Krull dimension 1, in particular is infinite.  Thus $G$ satisfies all four condition from our Main Theorem and so is not self-similar.

\medskip


\begin{thebibliography}{99}

\bibitem{A-M} Atiyah, M. F.; Macdonald, I. G., Introduction to commutative algebra,
Addison-Wesley Publishing Co., Reading, Mass.-London-Don Mills, Ont., 1969

\bibitem{B-N-W} Bartholdi, L., Neuhauser, M., Woess, W.: Horocyclic products of trees. J. Eur.
Math. Soc. (JEMS) 10(3), 771–816 (2008)

 \bibitem{Bieri}  Bieri, Robert; Homological dimension of discrete groups,
Queen Mary College Mathematics Notes,
Queen Mary College, Department of Pure Mathematics, London, 1981 

\bibitem{B-G} Bieri, Robert; Groves, John R. J. A rigidity property for the set of all characters induced by valuations. Trans. Amer. Math. Soc. 294 (1986), no. 2, 425 - 434

\bibitem{B-G2} Bieri, Robert; Groves, J. R. J. The geometry of the set of characters induced by valuations. J. Reine Angew. Math. 347 (1984), 168 - 195



\bibitem{B-S} 
Bieri, Robert; Strebel, Ralph Valuations and finitely presented metabelian groups. Proc. London Math. Soc. (3) 41 (1980), no. 3, 439 - 464

\bibitem{B-S2} Bieri, Robert; Strebel, Ralph, A geometric invariant for modules over an abelian group,
J. Reine Angew. Math. 322 (1981), 170 - 189




\bibitem{Brown} Brown, Kenneth S.; Cohomology of groups,
Grad. Texts in Math., 87
Springer-Verlag, New York, 1994 



\bibitem{Brazil} Dantas, Alex C.; Santos, Tulio M. G.;Sidki Said N.,
Intransitive self-similar groups. J. Algebra 567(2021), 564–581


\bibitem{Brazil2} Dantas, Alex C.; Sidki, Said On self-similarity of wreath products of abelian groups. Groups Geom. Dyn. 12 (2018), no. 3, 1061–1068

\bibitem{G}  Grigorchuk, R. I.; On Burnside's problem on periodic groups. Funktsionalyi Analiz i ego Prilozheniya, vol. 14 (1980), no. 1, pp. 53 - 54

\bibitem{G-K} Groves, J. R. J.; Kochloukova, D. H.; Embedding properties of metabelian Lie algebras and metabelian discrete groups,
J. London Math. Soc. (2) 73 (2006), no. 2, 475 - 492

\bibitem{Sidki} Gupta, Narain; Sidki, Saïd On the Burnside problem for periodic groups. Math. Z. 182 (1983), no. 3, 385–388



\bibitem{Hall} Hall, P.; On the finiteness of certain soluble groups, Proc. London Math. Soc.
(3) 9 (1959), 595-622

\bibitem{H} Hungerford, Thomas W., Algebra, 
Grad. Texts in Math., 73
Springer-Verlag, New York-Berlin, 1980 

\bibitem{DawidK} Kielak, Dawid; The Bieri-Neumann-Strebel invariants via Newton polytopes,
Invent. Math. 219 (2020), no. 3, 1009 - 1068


\bibitem{K-M} Kochloukova, Dessislava H.; Mendonça, Luis; On the Bieri-Neumann-Strebel-Renz $\Sigma$-invariants of the Bestvina-Brady groups,
Forum Math. 34 (2022), no. 3, 605 - 626

\bibitem{K-S} Kochloukova, Dessislava H.; Sidki, Said N. Self-similar groups of type FPn. Geom. Dedicata 204 (2020), 241 - 264



 


\bibitem{N-S} 
Nekrashevych, V.; Sidki, S. Automorphisms of the binary tree: state-closed subgroups and dynamics of 1/2-endomorphisms. Groups: topological, combinatorial and arithmetic aspects, 375 - 404, London Math. Soc. Lecture Note Ser., 311, Cambridge Univ. Press, Cambridge, 2004



\bibitem{O} Olivier, M.; Which Nilpotent Groups are Self-Similar?, arXiv:2101.11291 

\bibitem{Jim} Roseblade, J. E., Group rings of polycyclic groups,
J. Pure Appl. Algebra 3 (1973), 307–328

\bibitem{PhD} Santos, Tulio M. G.; Grupos autossimilares intransitivos, PhD Thesis, University of Brasilia, 2021

\bibitem{S-S} Skipper, Rachel; Steinberg, Benjamin;
Lamplighter groups, bireversible automata, and rational series over finite rings,
Groups Geom. Dyn. 14 (2020), no. 2, 567 - 589

\bibitem{W} Watkins, John J. Krull dimension of polynomial and power series rings, Progress in commutative algebra 2, 
Walter de Gruyter GmbH $\&$ Co. KG, Berlin, 2012, 205–219.

\bibitem{W-Z} Witzel, Stefan; Zaremsky, Matthew C. B.; The $\Sigma$-invariants of Thompson's group F via Morse theory,
London Math. Soc. Lecture Note Ser., 451
Cambridge University Press, Cambridge, 2018, 173–193.
ISBN: 978-1-108-43762-2

\bibitem{Matt} Zaremsky, Matthew C. B.; The BNSR-invariants of the Houghton groups, concluded;
Proc. Edinb. Math. Soc. (2) 63 (2020), no. 1, 1–11


\end{thebibliography}
\end{document}